\numberwithin{equation}{section}
\newtheorem{thm}{Theorem}[section]
\newtheorem{lem}[thm]{Lemma}
\newtheorem{prop}[thm]{Proposition}
\newtheorem{cor}[thm]{Corollary}
\theoremstyle{remark}
\theoremstyle{definition}
\newcommand{\GL}{\mathrm{GL}}
\newcommand{\Q}{\mathbb{Q}}
\newcommand{\sm}{\mathrm{sm}}
\newcommand{\cts}{\mathrm{cts}}
\newcommand{\Z}{\mathbb{Z}}
\DeclareMathOperator{\supp}{supp}
\DeclareMathOperator{\val}{val}
\DeclareMathOperator{\diag}{diag}
\DeclareMathOperator{\Ind}{Ind}
\DeclareMathOperator{\Lie}{Lie}
\DeclareMathOperator{\Res}{Res}
\DeclareMathOperator{\Hom}{Hom}
\DeclareMathOperator{\Ad}{Ad}
\newcommand{\alggrp}[1]{\underline{#1}}
\renewcommand{\simeq}{\cong}
\newcommand{\congto}{\xrightarrow{\,\sim\,}}
\newcommand{\into}{\hookrightarrow}
\newcommand{\onto}{\twoheadrightarrow}
\title[On the irreducibility of $p$-adic Banach principal series of $\GL_{3}$]{On the irreducibility of $p$-adic Banach principal series of $p$-adic $\GL_3$}
\author{Noriyuki Abe}
\address[N. Abe]{Graduate School of Mathematical Sciences, the University of Tokyo, 3-8-1 Komaba, Meguro-ku, Tokyo 153-8914, Japan.}
\thanks{The first-named author was supported by JSPS KAKENHI Grant Number 18H01107.}
\email{abenori@ms.u-tokyo.ac.jp}
\author{Florian Herzig}
\address[F. Herzig]{Department of Mathematics, University of Toronto, 40 St.\ George Street, Toronto, ON M5S 2E4, Canada.}
\thanks{The second-named author was partially supported by an NSERC grant.}
\email{herzig@math.toronto.edu}
\dedicatory{Dedicated to Pham Huu Tiep on the occasion of his 60th birthday}
\subjclass{22E50}
\keywords{$p$-adic representations, $p$-adic groups}
\begin{document}
\begin{abstract}
  We establish an optimal (topological) irreducibility criterion for $p$-adic Banach principal series of $\GL_{n}(F)$, where $F/\Q_p$ is finite and $n \le 3$.
  This is new for $n = 3$ as well as for $n = 2$, $F \ne \Q_p$ and establishes a refined version of Schneider's conjecture \cite[Conjecture 2.5]{MR2275644} for these groups.
\end{abstract}

\maketitle

\section{Introduction}
\label{sec:introduction}

Suppose that $F/\Q_p$ is a finite extension with normalized absolute value $\lvert\cdot\rvert_F$ and residue field of cardinality $q$.
This paper concerns the continuous representations of $G = \GL_n(F)$ on $p$-adic Banach spaces over a coefficient field $C$ that is a finite extension of $\Q_p$. Such Banach representations were introduced in the work of Schneider--Teitelbaum \cite{MR1900706} and play a fundamental role in the $p$-adic Langlands program (see for example \cite{MR2097893}, \cite{MR2359853}, \cite{Colmez}, \cite{emerton-local-global}, \cite{MR3150248}, \cite{MR3529394}).
Little has been known about Banach representations outside the group $\GL_2(\Q_p)$ so far.
The main goal of this paper is to determine an optimal (topological) irreducibility criterion for Banach principal series of $\GL_n(F)$ when $n \le 3$.
This goes further than Schneider's conjecture \cite[Conjecture 2.5]{MR2275644} for these groups.

Let $B$ denote the upper-triangular Borel subgroup, $T$ the diagonal maximal torus.
If $\chi = \chi_1 \otimes \cdots \otimes \chi_n : T \to C^\times$ is a continuous character, then we inflate $\chi$ to $B$ and form the parabolic induction
\begin{equation*}
  (\Ind_B^G \chi)^\cts := \{ \text{$f\colon G\to C$ continuous} \mid \text{$f(gb) = \chi(b)^{-1}f(g)$ for any $g\in G$, $b\in B$} \},
\end{equation*}
which carries a natural Banach topology making it into an (admissible) Banach representation of $G$ under left translation that we call a \emph{Banach principal series}.
If $\chi$ is smooth and we replace continuous functions by locally constant functions, then we obtain a dense smooth subrepresentation $(\Ind_B^G \chi)^\sm$ of $(\Ind_B^G \chi)^\cts$.

To state our main result, we say that a character $\lambda \colon F^\times \to C^\times$ is \emph{non-positive algebraic} if it is of the form $\lambda(t) = \prod_{\kappa\colon F\to \overline C} \kappa(t)^{a_\kappa}$
for some $(a_\kappa) \in \Z_{\le 0}^{\Hom(F,\overline C)}$, where $\overline C$ denotes an algebraic closure of $C$. 
\begin{thm}\label{thm:main-intro}
  Suppose that $n \le 3$.
  Then the Banach principal series $(\Ind_{B}^{G}\chi)^{\cts}$ is reducible if and only if there exists $1 \le i < n$ such that $\chi_{i}\chi_{i+1}^{-1}$ is non-positive algebraic.
\end{thm}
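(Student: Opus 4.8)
The plan is to reduce the irreducibility question for the Banach principal series to a question about the locally analytic and smooth vectors, using the machinery of Schneider–Teitelbaum and Emerton. First I would note that the "only if" direction—reducibility when some $\chi_i\chi_{i+1}^{-1}$ is non-positive algebraic—is the easier part: in that case one exhibits an explicit nonzero proper closed invariant subspace. Concretely, if $\chi_i\chi_{i+1}^{-1}$ is non-positive algebraic, then after twisting one is in a situation where the corresponding algebraic (or locally algebraic) principal series for the relevant $\GL_2$-Levi block embeds as a proper closed subrepresentation; the point is that a non-positive algebraic character of $F^\times$ gives rise, via the $\mathfrak{n}$-cohomology / intertwining map, to a nonzero $C$-linear intertwiner between two $\GL_2$ principal series whose image is a proper closed submodule. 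One then inflates and parabolically induces this along the standard parabolic with Levi $\GL_1^{i-1}\times\GL_2\times\GL_1^{n-i-1}$ to produce reducibility for $\GL_n$ with $n\le 3$.

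For the "if" direction (irreducibility when no $\chi_i\chi_{i+1}^{-1}$ is non-positive algebraic) the strategy is to show that any nonzero closed invariant subspace $W\subseteq (\Ind_B^G\chi)^\cts$ must be everything. The key tool is that $(\Ind_B^G\chi)^\cts$ is admissible, so by Schneider–Teitelbaum duality it suffices to work with the dual as a finitely generated module over the Iwasawa algebra, or equivalently to control the locally analytic vectors $(\Ind_B^G\chi)^\an$, which are dense. I would proceed in three steps. Step 1: analyze the locally analytic principal series $(\Ind_B^G\chi)^\an$; by Orlik–Strauch / Breuil-type results its irreducible constituents are indexed by combinatorial data attached to the Weyl group and to which of the characters $\chi_i\chi_{i+1}^{-1}$ are "locally algebraic dominant", and under our hypothesis one shows $(\Ind_B^G\chi)^\an$ is in fact topologically irreducible (or at worst has a controllable socle). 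Step 2: promote irreducibility from the locally analytic vectors to the Banach space—here one uses that a closed invariant $W$ has dense image or zero image in the locally analytic vectors, combined with the fact that $(\Ind_B^G\chi)^\an$ is dense in $(\Ind_B^G\chi)^\cts$, to conclude $W$ is either $0$ or dense, hence (being closed) everything. Step 3: handle the subtlety that the Banach space could a priori have closed subspaces not visible on locally analytic vectors; this is excluded by an argument with the Iwasawa-algebra dual being torsion-free of the expected rank, or by a direct argument that the locally analytic vectors of $W$ are nonzero whenever $W\ne 0$ (using that $W$ is itself an admissible Banach representation and admissible Banach representations have dense locally analytic vectors by Schneider–Teitelbaum).

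The main obstacle I anticipate is Step 1 for $n=3$: determining the precise reducibility of the locally analytic principal series $(\Ind_B^G\chi)^\an$ of $\GL_3(F)$ when $F\ne\Q_p$, because the constituents are governed not just by the Weyl group $S_3$ but by the full set $\Hom(F,C)$ of embeddings, so the combinatorics of "partial" Verma-module / $\mathfrak{n}$-homology maps becomes genuinely multi-variable. One has to check that, when no $\chi_i\chi_{i+1}^{-1}$ is non-positive algebraic, none of the potential intertwining operators (corresponding to simple reflections twisted by a nonempty subset of embeddings) is nonzero, and that the ones that could be nonzero are killed by the hypothesis; this likely requires a careful case analysis using the length-two and length-three elements of $S_3$ and a dimension/central-character count to rule out the long Weyl element contributions. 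A secondary difficulty is making the passage from locally analytic irreducibility to Banach irreducibility airtight, since in general locally analytic irreducibility does not imply Banach irreducibility—here one must genuinely use admissibility of the Banach principal series together with the specific structure of the problem (e.g.\ that the relevant universal module / Iwasawa-module is cyclic and has no unexpected quotients), and I would expect this to be the technical heart of the argument after the combinatorics is settled.
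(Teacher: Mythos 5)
Your reducibility direction is essentially fine (the paper argues it even more simply: if $\chi_i\chi_{i+1}^{-1}$ is non-positive algebraic, the corresponding $\GL_2$-block Banach principal series contains a finite-dimensional algebraic subrepresentation, which one induces up by transitivity), but your irreducibility direction has a genuine gap at Step 1, and it is exactly where the whole difficulty of the theorem lies. You propose to show that, when no $\chi_i\chi_{i+1}^{-1}$ is non-positive algebraic, the locally analytic principal series $(\Ind_B^G\chi)^{\an}$ is topologically irreducible (or has a controllable socle) and then to transfer this to the Banach space. That premise is false in the critical case: take $\chi$ smooth with $\chi_1\ne\chi_2$, $\chi_2\ne\chi_3$ and $\chi_1\chi_3^{-1}=\lvert\cdot\rvert_F$. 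Then no $\chi_i\chi_{i+1}^{-1}$ is non-positive algebraic (they are nontrivial smooth characters), yet already the smooth principal series $(\Ind_B^G\chi)^{\sm}\subset(\Ind_B^G\chi)^{\an}$ is reducible --- generically indecomposable of length $2$ --- so no Orlik--Strauch bookkeeping can make the locally analytic vectors irreducible, and Steps 2--3 have nothing to transfer. (More generally, for smooth $\chi$ the smooth induction is a proper closed subrepresentation of the locally analytic one, so analytic-level irreducibility is hopeless here.) The correct bridge is not ``analytic irreducibility plus density of analytic vectors'' but the criterion of Theorem~\ref{thm:criterion} (from \cite{unitary}): after reducing to a smooth character $\tau$ on a Levi $L_Q$, the Banach induction is irreducible if and only if \emph{every irreducible subrepresentation of the smooth principal series} is dense in the continuous induction.

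Consequently the technical heart of the paper is entirely missing from your proposal: one must prove that the non-generic irreducible smooth subrepresentation (a twist of a Steinberg-type socle of $(\Ind_{B\cap L}^{L}\chi)^{\sm}$ seen through the Jacquet module) is dense in $(\Ind_B^G\chi)^{\cts}$ in the exceptional case $\chi_1\chi_3^{-1}=\lvert\cdot\rvert_F$. The paper does this by explicitly inverting the restriction isomorphism $((\Ind_B^G\chi)^{\sm})^{N_0,Z_L^+=\chi}\congto(\Ind_{B\cap L}^L\chi)^{\sm}$ (Theorem~\ref{thm:explicit}), producing explicit functions $f_n$ lying in any nonzero subrepresentation, and then verifying the hypotheses of the density criterion Lemma~\ref{lem:irred1}/Corollary~\ref{cor:irreducibility criterion} with carefully estimated sequences $h_n$, $h_n'$ (this is where the auxiliary condition $\lvert\chi_2^{-1}\chi_3(\varpi_F)\rvert>\lvert q\rvert$ and the outer automorphism of $\GL_3$ enter). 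Your substitute for this --- Schneider--Teitelbaum duality, torsion-freeness of the Iwasawa-module, or ``dense or zero image in the analytic vectors'' --- does not address the actual phenomenon that a reducible dense smooth subspace can still generate the whole Banach space, which is the statement one must prove; indeed you acknowledge yourself that analytic irreducibility would not imply Banach irreducibility, but the deeper problem is that analytic irreducibility simply fails in the decisive cases.
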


This result was known for $n = 1$, as well as for $n = 2$ and $F = \Q_p$ \cite[Proposition 2.6]{MR2275644}.
It was also known when $d\chi_{i,\kappa}-d\chi_{j,\kappa}-(j-i) \not\in \Z_{< 0}$ for all $1 \le i < j \le n$ and all $\kappa \colon F \to C$ \cite[Proposition 2.6]{MR2275644}, \cite{OS}, where $d\chi_i \colon F \otimes_{\Q_p} C \cong \bigoplus_\kappa C \to C$ denotes the derivative of $\chi_i$ (noting that $\chi_i$ is $\Q_p$-locally analytic), by locally analytic representation theory and comparison with BGG category $\mathcal O$.
It was known when $\lvert \chi_i\chi_j^{-1}(\varpi_F)\rvert < 1$ for all $1 \le i < j \le n$ \cite{MR3537231}, where $\lvert\cdot\rvert$ denotes a defining absolute value on $C$, by using continuous distribution algebras.
Finally it was known when $\chi$ is unitary and $\overline\chi_i \ne \overline\chi_{i+1}$ for all $1 \le i < n$, where $\overline\chi_i$ denotes the reduction of $\chi_i$ modulo the maximal ideal of $\mathcal O_C$, since the reduction of a $G$-stable unit ball in $(\Ind_{B}^{G}\chi)^{\cts}$ is irreducible in this case \cite{MR2255820}.
Note that the conditions in the last three results exclude the exceptional cases where $\chi_{i}\chi_{i+1}^{-1}$ is non-positive algebraic.
(Also note that some of these results assume that $\Hom(F,C) = \Hom(F,\overline C)$ or at least $\Hom(F,C)\ne \varnothing$.)

As a corollary to Theorem~\ref{thm:main-intro} we obtain a slightly improved version of \cite[Theorem \ref{unitary:thm:GLn-irred}]{unitary} for $\GL_n(F)$ ($n \ge 1$), see Corollary~\ref{cor:GLn}.
It is natural to wonder if Theorem~\ref{thm:main-intro} continues to hold for all $n$, but the evidence is rather limited at this point.

We now indicate how we prove Theorem~\ref{thm:main-intro}.
The ``if'' direction is clear by transitivity of parabolic induction, because if $n = 2$ and $\chi_1 \chi_2^{-1}$ is non-positive algebraic, then $(\Ind_{B}^{G}\chi)^{\cts}$ contains (up to twist by $\chi_1 \circ \det$) a finite-dimensional algebraic subrepresentation.

To prove the ``only if'' direction we apply our results from \cite{unitary} -- relying on locally analytic vectors \cite{ST} and the work of Orlik--Strauch \cite{OS2}, \cite{OS3} -- to reduce to the case where $\chi$ is smooth and moreover the smooth representation $(\Ind_{B}^{G}\chi)^{\sm}$ contains a non-generic irreducible subrepresentation.
Note that we may extend $C$ if necessary to prove Theorem~\ref{thm:main-intro}, hence we can apply the results of \cite{unitary}.
Our assumption then becomes that $\chi_i \ne \chi_{i+1}$ for all $1 \le i < n$, and it remains to consider the case where $n = 3$ and $\chi_1\chi_3^{-1} = \lvert\cdot\rvert_F$ (because Bernstein--Zelevinsky proved that every irreducible subrepresentation is generic otherwise).
We remark that generically $(\Ind_B^G \chi)^\sm$ is indecomposable of length 2 in this case.
By symmetry, using the outer automorphism of $\GL_3$, we reduce to proving the following proposition.

\begin{prop}\label{prop:final case-intro}
Assume that $\chi$ is smooth and that $\chi_{1}\ne \chi_{2}$, $\chi_{2}\ne \chi_{3}$.
If $\chi_{1}\chi_{3}^{-1} \ne \lvert \cdot\rvert_F^2$ and $\lvert \chi_{2}^{-1}\chi_{3}(\varpi_{F})\rvert > \lvert q\rvert$, then $(\Ind_{B}^{G}\chi)^{\cts}$ is irreducible.
\end{prop}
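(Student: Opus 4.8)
We outline the approach. Write $V=(\Ind_B^G\chi)^\cts$ and $V^\sm=(\Ind_B^G\chi)^\sm$. The first step is to realise $V$ as a continuous parabolic induction from a maximal parabolic. Let $P\supset B$ be the standard parabolic of block type $(1,2)$, with Levi $M\cong\GL_1(F)\times\GL_2(F)$, and let $\bar P$ be the opposite parabolic. Since $\chi$ is smooth and $\chi_2\ne\chi_3$, the character $\chi_2\chi_3^{-1}$ is smooth and nontrivial, hence not non-positive algebraic (the only smooth non-positive algebraic character of $F^\times$ is the trivial one), so by the case $n=2$ of Theorem~\ref{thm:main-intro} the Banach principal series $\tau:=(\Ind_{B\cap\GL_2}^{\GL_2}(\chi_2\otimes\chi_3))^\cts$ of $\GL_2(F)$ is topologically irreducible. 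Hence $V\cong\Ind_P^G\sigma$, where $\sigma:=\chi_1\boxtimes\tau$ is an irreducible admissible Banach representation of $M$.

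The second step reduces topological irreducibility of $V$ to a density statement about smooth vectors. By \cite{ST} the locally analytic vectors $V^\an$ are dense in $V$, and by Orlik--Strauch \cite{OS2}, \cite{OS3} one has $V^\an\cong\mathcal{F}_B^G(\mathscr{M},V^\sm)$, where $\mathscr{M}$ is the Verma module attached to the derivative $d\chi$; because $\chi$ is smooth, $\mathscr{M}$ is antidominant, hence simple, so $\mathcal{F}_B^G(\mathscr{M},-)$ is exact, carries irreducible smooth representations to irreducible locally analytic ones, and induces a bijection between the composition factors of $V^\sm$ and of $V^\an$ compatible with the submodule structure. If $V^\sm$ is irreducible we are done; otherwise, under the hypotheses of the proposition, $V^\sm$ has a unique generic constituent $\pi_1$, occurring with multiplicity one and equal to the cosocle of $V^\sm$. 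Let $N\subsetneq V^\sm$ be the resulting unique maximal subrepresentation, so that $V^\sm/N\cong\pi_1$. Then $\mathcal{F}_B^G(\mathscr{M},N)$ is the unique maximal proper closed subrepresentation of $V^\an$, and since the locally analytic vectors of any proper closed subrepresentation of $V$ form a proper closed subrepresentation of $V^\an$, every proper closed $G$-subrepresentation of $V$ is contained in $\overline N$, the closure of $N$ in $V$. Thus $V$ is topologically irreducible if and only if $\overline N=V$, i.e.\ if and only if the admissible Banach quotient $Q:=V/\overline N$ is zero.

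The heart of the argument is to prove $Q=0$, and this is where the two numerical hypotheses enter. Suppose $Q\ne0$. Then the dense image of $V^\sm$ in $Q$ is a nonzero quotient of $\pi_1$, so $Q$ is a Banach completion of the generic smooth representation $\pi_1$. We apply Emerton's ordinary parts functor $\mathrm{Ord}_{\bar P}$, which is right adjoint to $\Ind_P^G$ and left exact, to the sequence $0\to\overline N\to V\to Q\to0$; together with $\mathrm{Ord}_{\bar P}(\Ind_P^G\sigma)\cong\sigma$ this gives an exact sequence $0\to\mathrm{Ord}_{\bar P}(\overline N)\to\sigma\to\mathrm{Ord}_{\bar P}(Q)$. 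If $\mathrm{Ord}_{\bar P}(\overline N)=\sigma$, then $\Hom_G(V,\overline N)=\Hom_M(\sigma,\mathrm{Ord}_{\bar P}(\overline N))\ne0$ yields a nonzero composite $V\to\overline N\into V$, which must be a nonzero scalar because $\Hom_G(V,V)=\Hom_M(\sigma,\mathrm{Ord}_{\bar P}(V))=\Hom_M(\sigma,\sigma)=C$; hence $\overline N=V$ and $Q=0$, contrary to assumption. Therefore $\mathrm{Ord}_{\bar P}(\overline N)=0$, and $\sigma$ embeds into $\mathrm{Ord}_{\bar P}(Q)$. It remains to show that this is impossible. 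Here $\mathrm{Ord}_{\bar P}(Q)$ is controlled by the Jacquet module of $\pi_1$ along $\bar P$ and by the relevant continuous intertwining integral $V\to\Ind_{\bar P}^G\sigma$: the inequality $\lvert\chi_2^{-1}\chi_3(\varpi_F)\rvert>\lvert q\rvert$ is the convergence and positivity condition that places the $\bar P$-exponent of $\pi_1$ matching $\sigma$ outside the ordinary range (so that it does not occur in $\mathrm{Ord}_{\bar P}(Q)$), while $\chi_1\chi_3^{-1}\ne\lvert\cdot\rvert_F^2$ rules out a cancellation in the Jacquet module that would otherwise obstruct this conclusion. We conclude that $\sigma$ does not embed into $\mathrm{Ord}_{\bar P}(Q)$, a contradiction, so $Q=0$ and $V$ is topologically irreducible.

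The main obstacle is this final step. One must identify $\pi_1$ explicitly --- as the generic constituent of a $\GL_3(F)$ smooth principal series attached to a segment of length at most three --- compute its Jacquet modules along the two maximal parabolics, describe the corresponding continuous intertwining integrals, and check that the embedding $\sigma\into\mathrm{Ord}_{\bar P}(Q)$ fails exactly under the stated inequalities (and that these inequalities are precisely what this argument requires). A secondary difficulty is bookkeeping: $V^\sm$ need not have length two --- for instance if $\chi_1\chi_2^{-1}=\lvert\cdot\rvert_F^2$ --- so that $N$ may be longer and the combinatorics of the Jacquet modules more involved; but in every case under the hypotheses $\pi_1$ is the cosocle of $V^\sm$ with multiplicity one, so the reduction above goes through unchanged.
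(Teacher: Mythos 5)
Your reduction in the second step rests on a false structural claim. For smooth $\chi$ the derivative $d\chi$ vanishes, so the object on the Lie-algebra side is the Verma module of highest weight $0$; this weight is dominant, not antidominant, and $M(0)$ is far from simple (for $\GL_3$ it has six Jordan--H\"older factors). Consequently $V^{\an}=(\Ind_B^G\chi)^{\an}$ has strictly more composition factors than $V^{\sm}$ --- already for $\GL_2$ with smooth $\chi$ the smooth principal series sits inside the locally analytic one with an infinite-dimensional quotient --- so there is no bijection between the constituents of $V^{\sm}$ and of $V^{\an}$, no unique maximal proper closed subrepresentation of the form $\mathcal{F}_B^G(\mathscr{M},N)$, and hence no justification for the key assertion that every proper closed subrepresentation of $V$ is contained in $\overline{N}$. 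The correct reduction is precisely the nontrivial input from \cite{unitary} used in the paper (Theorem~\ref{thm:criterion}, i.e.\ \cite[Corollary~2.54]{unitary}): $(\Ind_B^G\chi)^{\cts}$ is irreducible if and only if every irreducible subrepresentation of $(\Ind_B^G\chi)^{\sm}$ is dense in it; that equivalence is delicate exactly because of the extra locally analytic constituents your argument implicitly discards.

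Second, the heart of your argument is not carried out, and the tool you invoke is not available in this generality. Emerton's $\mathrm{Ord}_{\bar P}$ and the adjunction $\Hom_G(\Ind_P^G\sigma,W)\cong\Hom_L(\sigma,\mathrm{Ord}_{\bar P}W)$ are constructed for $p$-adically admissible representations over $\mathcal{O}_C$, i.e.\ they require $G$-stable bounded lattices; here $\chi$ is typically non-unitary (the hypothesis $\lvert\chi_2^{-1}\chi_3(\varpi_F)\rvert>\lvert q\rvert$ concerns a non-unitary regime), and $(\Ind_B^G\chi)^{\cts}$ need not admit any invariant norm, so the sequence $0\to\mathrm{Ord}_{\bar P}(\overline{N})\to\sigma\to\mathrm{Ord}_{\bar P}(Q)$ has no meaning as written. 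Even granting a substitute, the decisive claim --- that $\sigma$ does not embed into $\mathrm{Ord}_{\bar P}(Q)$ under the two numerical hypotheses --- is only described, not proved, and you identify it yourself as the main obstacle; it is exactly where the work lies. The paper does this work explicitly and quite differently: it inverts the Jacquet-module isomorphism $((\Ind_B^G\chi)^{\sm})^{N_0,Z_L^+=\chi}\cong(\Ind_{B\cap L}^{L}\chi)^{\sm}$ by closed formulas (Theorem~\ref{thm:explicit}), uses them to produce explicit functions $f_n$ lying in \emph{every} nonzero smooth subrepresentation (the normalization $\int_{\mathcal{O}_F}g=0$ handles the case of a Steinberg-type socle, using $\chi_1\ne\chi_2$), and then proves density via the elementary criterion of Corollary~\ref{cor:irreducibility criterion} with explicit comparison functions $h_n'$; this is where $\lvert\chi_2^{-1}\chi_3(\varpi_F)\rvert>\lvert q\rvert$ (giving $\lvert\gamma\rvert<1$) and $\chi_1\chi_3^{-1}\ne\lvert\cdot\rvert_F^2$ actually enter. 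As it stands, your proposal has genuine gaps at both of these points.
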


By \cite[Corollary~\ref{unitary:cor:irreduciblity}]{unitary} it suffices to show that any irreducible subrepresentation $\pi$ of $(\Ind_{B}^{G}\chi)^{\sm}$ is dense in $(\Ind_{B}^{G}\chi)^{\cts}$.
Let $P$ be the parabolic corresponding to $3 = 2+1$, with unipotent radical $N$ and Levi subgroup $L$ containing $T$.
Let $Z_L$ denote the center of $L$.
Let $N_0 := N \cap \GL_3(\mathcal O_F)$, $L^+ := \{ z \in Z_L \mid z N_0 z^{-1} \subset N_0\}$, and $Z_L^+ := Z_L \cap L^+$.
Then $\pi^{N_0}$ carries a Hecke action of $L^+$ by letting $\ell \in L^+$ act as $[N_{0}:\ell N_{0}\ell^{-1}]^{-1}\sum_{n\in N_{0}/\ell N_{0}\ell^{-1}}n\ell v$ for $v\in \pi^{N_{0}}$,
and $\pi^{N_0,Z_L^+=\chi}$ becomes a smooth representation of $L$, with $L^+$ acting via the Hecke action and $Z_L$ via $\chi$.
In fact, the natural map $\pi^{N_0,Z_L^+ = \chi} \to \pi_N^{Z_L = \chi}$ is an $L$-linear isomorphism \cite[Proposition 4.3.4]{MR2292633}, where $\pi_N$ denotes the unnormalized Jacquet module.
We use the geometric lemma~\cite[5.2~Theorem]{MR0579172} to show that
\begin{equation*}
  0 \ne \pi^{N_0,Z_L^+ = \chi} \into ((\Ind_{B}^{G}\chi)^{\sm})^{N_{0},Z_{L}^{+} = \chi} \xrightarrow[\theta]{\;\sim\;} (\Ind_{B\cap L}^{L}\chi)^{\sm},
\end{equation*}
where the final isomorphism $\theta$ is induced by restriction of functions from $G$ to $L$.

Our key step is to find an explicit inverse of the isomorphism $\theta$, see Theorem~\ref{thm:explicit}.
As $\chi_1 \ne \chi_2$, the $L$-subrepresentation $\pi^{N_0,Z_L^+ = \chi}$ has to either be the full principal series or a twist of the Steinberg representation, and we can apply $\theta^{-1}$ to obtain many explicit functions in $\pi$.
We now put $\sigma := (\Ind_{B\cap L}^{L}\chi)^{\cts}$ and think of $(\Ind_{B}^{G}\chi)^{\cts}$ as $(\Ind_{P}^{G}\sigma)^{\cts}$.
Then we are able to find sequences $h_n \in \pi$ and $h_n' \in (\Ind_{P}^{G}\sigma)^{\cts}$ ($n \ge 1$) and a vector $v \in \sigma$ such that 
\begin{itemize}
\item $\supp(h_n') \subset \overline N_0 P$, where $\overline N_0$ is the transpose of $N_0$;
\item $h'_{n}(x)\in Cv$ for all $x \in \overline N_0$;
\item the sequence $h_n'$ is bounded away from 0;
\item $\lim_{n\to\infty}(h_{n} - h'_{n}) = 0$.
\end{itemize}
Using Corollary~\ref{cor:irreducibility criterion}, which may be of independent interest for establishing the irreducibility of continuous parabolic inductions, we deduce that $\pi$ is dense in $(\Ind_{P}^{G}\sigma)^{\cts}$.

\subsection{Notation}
\label{sec:notation}
Let $C$ be a finite extension of $\Q_{p}$, $\mathcal{O}_{C}$ the ring of integers of $C$, and $\overline C$ a choice of algebraic closure.
We fix a uniformizer $\varpi_{C}$ of $C$ and an absolute value $\lvert\cdot\rvert$ on $C$.
In this paper, unless otherwise stated, the coefficient field of any representation is $C$.
Let $\alggrp{G}$ be a connected reductive group over $\Q_{p}$, $\alggrp{Z}_{\alggrp{G}}$ the center of $\alggrp{G}$, $G = \alggrp{G}(\Q_{p})$ is the group of rational points.
We use the same notation for other groups.
For a parabolic subgroup $\alggrp{P}$ of $\alggrp{G}$, Levi subgroup $\alggrp{L}$, unipotent radical $\alggrp{N}$ and a representation $\sigma$ of $L$, we consider the following two inductions.
1) If $\sigma$ is a Banach representation, namely a continuous representation on a $p$-adic Banach space, then $(\Ind_{P}^{G}\sigma)^{\cts}$ is the space of continuous maps $f\colon G\to \sigma$ such that $f(g\ell n) = \sigma(\ell)^{-1}f(g)$ for $g\in G$, $\ell\in L$ and $n\in N$, and $G$ acts via $(\gamma f)(g) = f(\gamma^{-1}g)$ for $g, \gamma \in G$.
Let $K$ be a compact open subgroup of $G$ such that $KP = G$ and $P\cap K = (L\cap K)(N\cap K)$.
We can always choose a defining norm $\lvert\cdot\rvert$ on $\sigma$ that is $L\cap K$-invariant.
Then we define the $K$-invariant norm $\lVert\cdot\rVert$ on $(\Ind_{P}^{G}\sigma)^{\cts}$ by $\lVert f\rVert = \sup_{x\in K}\lvert f(x)\rvert$, and $(\Ind_{P}^{G}\sigma)^{\cts}$ is a Banach representation with this norm.
2) If $\sigma$ is smooth, then $(\Ind_{P}^{G}\sigma)^{\sm}$ is the space of locally constant functions $G\to \sigma$ such that $f(g\ell n) = \sigma(\ell)^{-1}f(g)$ for $g\in G$, $\ell\in L$ and $n\in N$, and $G$ acts via $(\gamma f)(g) = f(\gamma^{-1}g)$ for $g, \gamma \in G$.

We say that a continuous representation of a topological group is \emph{irreducible} if it is topologically irreducible.

\subsection{Acknowledgement}
We thank the referees for helpful comments.
In particular, we thank the referee who asked rationality questions which led us to remove the assumption that the coefficient field $C$ be sufficiently large.
Part of this work was done during a pleasant stay of the first-named author at University of Toronto.

\section{Preliminaries}

\subsection{Irreducibility criterion}
\label{subsec:Criterion}
We recall an irreducibility criterion from \cite{unitary}.
We assume that the derived subgroup of $\alggrp{G}$ is simply connected.
Let $\alggrp{P} = \alggrp{L}\alggrp{N}$ be a parabolic subgroup and $\sigma$ an absolutely irreducible Banach representation of $L$.
We assume that $\sigma$ is finite-dimensional.
Then $\sigma$ is locally analytic (as the ground field is $\Q_p$) and by \cite[Lemma~\ref{unitary:lem:sigma0-tau-decomp}]{unitary}, after perhaps replacing $C$ by a finite extension, there exist locally analytic representations $\sigma_{0},\tau$ of $L$ and a parabolic subgroup $\alggrp{Q} = \alggrp{L}_{\alggrp Q}\alggrp{N}_{\alggrp Q}$ containing $\alggrp P$ such that
\begin{itemize}
\item $\sigma \simeq \sigma_{0}\otimes\tau$.
\item $\sigma_{0}$ is simple as $\Lie(L)\otimes_{\Q_{p}}C$-module and $\tau$ is smooth.
\item Let $L(\sigma_{0}')$ be the simple $\Lie(G)\otimes_{\Q_{p}}C$-module in the BGG category $\mathcal{O}$ such that $L(\sigma_{0}')^{\Lie(N)}\simeq \sigma_{0}'$, where $\sigma_{0}'$ is the dual of $\sigma_{0}$.
Then $L(\sigma_{0}')$ is locally $\Lie(Q)\otimes_{\Q_{p}}C$-finite and $\alggrp{Q}$ is maximal subject to this condition.
\item $L(\sigma_{0}')$ has the structure of a locally finite $Q$-locally analytic representation such that $q X q^{-1} = \Ad(q)(X)$ on $L(\sigma_{0}')$ for all $q \in Q$, $X \in \Lie(G)$ and whose restriction to $P$ on $L(\sigma_{0}')^{\Lie(N)}\simeq \sigma_{0}'$ is the given one.
(This structure is unique if it exists, cf.\ \cite[Section~\ref{unitary:sec:funct-orlik-stra}]{unitary}.)
\end{itemize}
The decomposition $\sigma \cong \sigma_{0}\otimes\tau$ is moreover unique up to smooth characters of $L_Q$ \cite[Lemma~\ref{unitary:lem:semisimple-lift}(ii)]{unitary}.

Then we have the following.

\begin{thm}[{\cite[Corollary~\ref{unitary:cor:irreduciblity}, Theorem~\ref{unitary:thm:equivalence-irred}]{unitary}}]\label{thm:criterion}
Assume $p > 2$ (resp.\ $p > 3$) if the absolute root system of $\alggrp{G}$ has irreducible components of type $B$, $C$ or $F_{4}$
(resp.\ $G_{2}$).
The following are equivalent.
\begin{enumerate}
\item $(\Ind_{P}^{G}\sigma)^{\cts}$ is irreducible;
\item $(\Ind_{P \cap L_{Q}}^{L_{Q}}\tau)^{\cts}$ is irreducible;
\item any irreducible subrepresentation of $(\Ind_{P\cap L_{Q}}^{L_{Q}}\tau)^{\sm}$ is dense in $(\Ind_{P\cap L_{Q}}^{L_{Q}}\tau)^{\cts}$.
\end{enumerate}
\end{thm}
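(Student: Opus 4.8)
The statement is quoted from our earlier work \cite{unitary}, so the following is really a sketch of how that argument proceeds. The plan is to prove the two reductions (i) $\Leftrightarrow$ (ii) and (ii) $\Leftrightarrow$ (iii) separately: the second treats the ``smooth coefficient'' case, and the first peels off the algebraic direction by inducing in stages through the larger parabolic $\alggrp{Q}$. Throughout one compares a continuous Banach representation $V$ with its space of locally analytic vectors $V^{\an}$ by means of \cite{ST}; the point that forces some care is that topological irreducibility of $V$ is genuinely \emph{weaker} than that of $V^{\an}$ --- for example $(\Ind_{P\cap L_{Q}}^{L_{Q}}\tau)^{\sm}$ is dense in the Banach space $(\Ind_{P\cap L_{Q}}^{L_{Q}}\tau)^{\cts}$ but is a proper closed subrepresentation of $(\Ind_{P\cap L_{Q}}^{L_{Q}}\tau)^{\an}$ in the locally analytic topology.

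For (ii) $\Leftrightarrow$ (iii) recall that $\tau$ is smooth, so $(\Ind_{P\cap L_{Q}}^{L_{Q}}\tau)^{\sm}$ is a dense smooth subrepresentation. The implication (ii) $\Rightarrow$ (iii) is formal: the closure of a nonzero irreducible smooth subrepresentation is a nonzero closed subrepresentation, hence everything. For (iii) $\Rightarrow$ (ii) the key claim is that every nonzero closed $L_{Q}$-subrepresentation $W$ of $(\Ind_{P\cap L_{Q}}^{L_{Q}}\tau)^{\cts}$ contains a nonzero smooth vector; granting this, $W\cap(\Ind_{P\cap L_{Q}}^{L_{Q}}\tau)^{\sm}$ is a nonzero admissible smooth subrepresentation, hence contains an irreducible one, which is dense by (iii), forcing $W$ to be everything. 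One cannot establish this claim by averaging $W$ over an open compact subgroup: since $p$-adic Haar measure is unbounded, averaging a continuous vector may blow it up and need not produce a smooth vector. Instead I would pass to $W^{\an}\subseteq(\Ind_{P\cap L_{Q}}^{L_{Q}}\tau)^{\an}$ and show that every irreducible locally analytic subrepresentation of $(\Ind_{P\cap L_{Q}}^{L_{Q}}\tau)^{\an}$ is already smooth. By Orlik--Strauch \cite{OS2,OS3} this representation is the value of the functor $\mathcal{F}$ on the generalized Verma module attached to the trivial character and on $\tau$; that Verma module has the trivial module as its unique simple quotient, its remaining Jordan--Hölder constituents being non-dominant simple objects of $\mathcal{O}$, so contravariant exactness of $\mathcal{F}$ forces every simple subrepresentation of $(\Ind_{P\cap L_{Q}}^{L_{Q}}\tau)^{\an}$ to lie in $(\Ind_{P\cap L_{Q}}^{L_{Q}}\tau)^{\sm}$ (the constituents coming from the non-dominant objects have no smooth vectors). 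Hence $W^{\an}$, and a fortiori $W$, contains a nonzero smooth vector.

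For (i) $\Leftrightarrow$ (ii) I would use transitivity of continuous parabolic induction to write $(\Ind_{P}^{G}\sigma)^{\cts}\cong(\Ind_{Q}^{G}\Sigma)^{\cts}$ with $\Sigma:=(\Ind_{P\cap L_{Q}}^{L_{Q}}(\sigma_{0}\otimes\tau))^{\cts}$, and then analyse $\Sigma$ using the $\alggrp{Q}$-equivariant structure on $L(\sigma_{0}')$: the locally $\Lie(L_{Q})\otimes_{\Q_{p}}C$-finite vectors of $\Sigma$ realise $L(\sigma_{0}')$ restricted to $L_{Q}$, and $\Sigma$ is, up to an explicit twist, an extension built out of that restriction and $(\Ind_{P\cap L_{Q}}^{L_{Q}}\tau)^{\cts}$. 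The maximality of $\alggrp{Q}$ is exactly what makes the algebraic direction $L(\sigma_{0}')$ ``irreducible as far as $L_{Q}$ can see it'', so that all reducibility of $(\Ind_{Q}^{G}\Sigma)^{\cts}$ is concentrated in $(\Ind_{P\cap L_{Q}}^{L_{Q}}\tau)^{\cts}$. Concretely I would run both implications by tracking a closed $G$-subrepresentation of $(\Ind_{Q}^{G}\Sigma)^{\cts}$ through an Emerton-type ordinary-parts (Jacquet) functor down to $L_{Q}$, where the Orlik--Strauch irreducibility criterion for $\mathcal{F}_{Q}^{G}(L(\sigma_{0}'),-)$, combined with the previous paragraph, pins down the possible proper closed subrepresentations. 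One bookkeeping point: the hypothesis on $p$ is precisely what is needed for the constructions recalled just before the statement --- the decomposition $\sigma\cong\sigma_{0}\otimes\tau$, the maximal parabolic $\alggrp{Q}$, and the locally analytic $Q$-structure on $L(\sigma_{0}')$ --- to exist, via the exponential maps for the relevant unipotent groups and the good behaviour of category $\mathcal{O}$ in these characteristics.

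I expect the hard part to be the ``smooth vector'' claim in the second paragraph: controlling the closed subrepresentations of a \emph{Banach} induction, rather than of its locally analytic vectors, is the delicate step, precisely because naive $p$-adic averaging fails and one must instead identify the socle of the locally analytic induction and transport the conclusion back to the Banach space. The reduction through $\alggrp{Q}$ is then mostly a matter of carefully matching normalisations --- in particular remembering that $(\Ind_{P}^{G}\sigma)^{\an}$ corresponds to a \emph{non-simple} generalized Verma module rather than directly to $L(\sigma_{0}')$ --- once the Orlik--Strauch dictionary is in place.
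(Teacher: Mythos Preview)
The paper does not give its own proof of this theorem: it is simply cited from \cite[Corollary~2.54, Theorem~2.56]{unitary} and used as a black box, so there is no proof here to compare your sketch against. Your outline is a reasonable summary of the kind of argument one expects in \cite{unitary} --- passage to locally analytic vectors, the Orlik--Strauch description of the Jordan--H\"older constituents of $(\Ind_{P\cap L_Q}^{L_Q}\tau)^{\an}$ to locate smooth vectors in any closed subrepresentation, and transitivity through $\alggrp{Q}$ to strip off the algebraic factor $\sigma_0$.

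One small correction: you attribute the hypothesis on $p$ to the existence of the data $\sigma_0$, $\tau$, $\alggrp{Q}$, and the $Q$-structure on $L(\sigma_0')$, but in the paper these are set up \emph{before} the theorem (via \cite[Lemma~2.51]{unitary}) with no restriction on $p$. The restriction on $p$ appears only in the theorem itself, so it must enter in the proof of the equivalences --- most plausibly in the Orlik--Strauch irreducibility criterion or in the comparison with category~$\mathcal{O}$ that underlies (i)~$\Leftrightarrow$~(ii), not in the preliminary decomposition of $\sigma$.
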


\subsection{Density lemmas}
\label{sec:density}
To prove that a certain subrepresentation is dense, we will use the following.
This is a generalization of \cite[Lemma~\ref{unitary:lem:supported on big cell}]{unitary}.
Recall that $K$ denotes a compact open subgroup of $G$ such that $KP = G$ and $P\cap K = (L\cap K)(N\cap K)$.

\begin{lem}\label{lem:irred1}
Let $\alggrp{P} = \alggrp{L}\alggrp{N}$ be a parabolic subgroup, $\overline{\alggrp{P}} =\alggrp{L}\overline{\alggrp{N}}$ the opposite parabolic subgroup, and assume that $P\cap K = (L\cap K)(N\cap K)$.
Let $\sigma$ be an irreducible Banach representation of $L$ with central character and an $L\cap K$-stable unit ball $\sigma^0$. Let $\pi\subset (\Ind_{P}^{G}\sigma)^\cts$ be a closed subrepresentation and $v\in \sigma$. We assume the following: there exists a compact open subgroup $\overline{N}_{0}\subset \overline{N}\cap K$ such that for any $k\in\Z_{>0}$, there exists $f \in \pi$
satisfying the following conditions:
\begin{enumerate}
\item we have $f(K) \subset \sigma^0$, $f(K) \not\subset \varpi_C \sigma^0$;
\item for any $x\in K\setminus \overline{N}_{0}(P\cap K)$ we have $f(x)\in \varpi_{C}^{k}\sigma^{0}$;
\item for any $n\in \overline{N}_{0}$ we have $f(n)\in Cv + \varpi_{C}^{k}\sigma^{0}$.
\end{enumerate}
Then we have $\pi = (\Ind_{P}^{G}\sigma)^\cts$.
\end{lem}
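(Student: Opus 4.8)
The plan is to show that the closed subrepresentation $\pi$ contains a "large" set of functions and then invoke irreducibility of $\sigma$ together with the Iwahori-type factorization $G = \overline N P$ (up to a compact open set) to conclude $\pi = (\Ind_P^G \sigma)^\cts$. First I would reduce to showing that $\pi$ contains a single explicit function $f_0$ supported on $\overline N_0(P\cap K)$ with $f_0(n) \in Cv$ for $n \in \overline N_0$ and $f_0$ "rescaled to norm $1$". Indeed, the hypotheses give, for each $k$, a function $f = f_k \in \pi$ with $\lVert f_k\rVert = 1$ (by (i)), with $f_k$ supported, modulo $\varpi_C^k\sigma^0$, on $\overline N_0(P\cap K)$ (by (ii)), and with $f_k(n) \in Cv + \varpi_C^k\sigma^0$ on $\overline N_0$ (by (iii)). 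Since $(\Ind_P^G\sigma)^\cts$ is a Banach space, after passing to a subsequence I may assume $f_k \to f_0$ for some $f_0 \in \pi$ (here I use that the $f_k$ live in a bounded set and that, by (ii) and (iii), the "error terms" $f_k - f_0$ can be controlled; more precisely one shows the $f_k$ themselves form a Cauchy sequence after adjusting by elements of $\pi$, or one directly extracts a convergent subsequence using admissibility of the Banach representation and the fact that the $f_k$ are all fixed by a fixed compact open subgroup). The limit $f_0$ then satisfies: $\lVert f_0\rVert = 1$ (the norm is continuous and $f_k(K)\subset\sigma^0$, $f_k(K)\not\subset\varpi_C\sigma^0$ pass to the limit since $\sigma^0$ is closed and open), $f_0$ is supported on $\overline N_0(P\cap K)$ (taking $k\to\infty$ in (ii)), and $f_0(n)\in Cv$ for all $n\in\overline N_0$ (taking $k\to\infty$ in (iii)).

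Next, with $f_0 \in \pi$ in hand, I would use the $G$-action and in particular the action of $L$ (via the embedding $L \hookrightarrow G$) and of $\overline N_0$ to spread $f_0$ out. Acting by $\ell \in L$, the function $\ell f_0$ is still supported on $\overline N_0(P\cap K)$ up to translating $\overline N_0$, but more usefully, because $f_0$ takes values in the line $Cv$ on $\overline N_0$ and $f_0$ is $P\cap K$-equivariant on the right, the values of $f_0$ near the identity coset are governed by $v$ and the $L$-action on $\sigma$. Since $\sigma$ is irreducible (topologically) with an $L\cap K$-stable unit ball, the $C$-span of $\{\sigma(\ell)v : \ell \in L\}$ is dense in $\sigma$; combined with the right $L$-translation structure of the induced representation this lets me produce, inside the closure $\pi$, functions supported on $\overline N_0(P\cap K)$ taking an arbitrary value $w \in \sigma$ at the identity (approximately, then exactly by closedness). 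This is essentially the mechanism of \cite[Lemma~2.2]{unitary}, and the present statement is its asserted generalization; I would follow that argument, the only new input being that the relevant "test function" $f_0$ is now extracted as a limit rather than given outright.

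Finally, I would use a covering/partition argument: translating the functions just constructed by elements of a suitable finite subset of $G$ (or of $\overline N/\overline N_0$ and of $L$), together with the Bruhat-type decomposition underlying $KP = G$ and $P\cap K = (L\cap K)(N\cap K)$, yields functions in $\pi$ supported on arbitrarily small neighborhoods of arbitrary points of $G/P$ with arbitrarily prescribed values. Since such functions are dense in $(\Ind_P^G\sigma)^\cts$ (continuous functions on the compact space $K/(P\cap K)$ valued in the Banach space $\sigma$, subject to the equivariance condition, are uniformly approximable by "locally constant with values in a dense subspace" ones), and $\pi$ is closed, we conclude $\pi = (\Ind_P^G\sigma)^\cts$. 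The main obstacle, and the step deserving the most care, is the first one: extracting a genuine limit function $f_0 \in \pi$ with the three clean properties from the approximate data $(f_k)$ — one must argue that the sequence (after passing to a subsequence and possibly subtracting correction terms lying in $\pi$) converges in the Banach topology, using boundedness, the common fixed compact open subgroup, and admissibility, rather than merely converging pointwise; once $f_0$ exists the remainder is a routine adaptation of the proof of \cite[Lemma~2.2]{unitary}.
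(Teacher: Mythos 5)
There is a genuine gap at the very first (and, as you yourself note, most critical) step: the extraction of a limit function $f_0\in\pi$ from the sequence $(f_k)$ does not work, and no subsequence/correction trick will repair it. The hypotheses give no control whatsoever on how the restrictions $f_k|_{\overline N_0}$ (which are, modulo $\varpi_C^k\sigma^0$, of the form $F_k\otimes v$ with $\sup_{\overline N_0}\lvert F_k\rvert=1$) vary with $k$: the scalar functions $F_k$ may be indicator-type functions concentrated on shrinking balls around wandering points, so the $f_k$ need not have any norm-convergent subsequence — the unit ball of an infinite-dimensional non-archimedean Banach space is not compact. The fixes you propose do not apply: elements of $\pi\subset(\Ind_P^G\sigma)^\cts$ are not assumed to be smooth vectors, so there is no "common fixed compact open subgroup''; even for vectors fixed by such a subgroup the invariant subspace is an infinite-dimensional Banach space when $\sigma$ is (as it is in the application, $\sigma=(\Ind_{B\cap L}^L\chi)^\cts$); and admissibility gives at best weak-type compactness for nets, under which condition (i) does not pass to the limit — a weak limit of the $f_k$ could perfectly well be $0$. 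Since the whole remainder of your argument hangs on having this single norm-one $f_0$ supported on $\overline N_0(P\cap K)$ with values in $Cv$, the proof collapses at this point. (A secondary issue: even granting such an $f_0$, its restriction is some unknown $F_0\otimes v$, and generating all of $C^0(\overline N_0,Cv)$ from it requires an extra argument using conjugation by central elements of $L$ to shrink supports; your sketch does not address this.)

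The paper's proof is designed precisely to avoid taking any limit of the $f_k$. It first upgrades the hypothesis to arbitrarily small $\overline N_0'\subset\overline N_0$ by conjugating a given $f$ by a suitable $z\in Z_L$ and rescaling (this is where the central character is used). It then reduces modulo $\varpi_C$, identifying $((\Ind_P^G\sigma)^\cts)^0/\varpi_C$ with the smooth induction $(\Ind_{P\cap K}^K\sigma^0/\varpi_C\sigma^0)^\sm$, and shows that the image of the set of admissible $f$'s is a nonzero $\overline N_0$-stable subspace of $\overline V(\overline N_0,\overline v)$, hence contains the $\overline N_0$-fixed ``constant'' function $g_{\overline N_0}$, and likewise $g_{\overline N_0'}$ for every smaller $\overline N_0'$; these translates generate all of $\overline V(\overline N_0,\overline v)$. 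A $\varpi_C$-adic successive-approximation induction (using Hahn--Banach to split off the $Cv$-component of the error) then shows every $h\in V(\overline N_0,v)^0$ is a limit of elements of $\pi$, hence lies in $\pi$ since $\pi$ is closed. Only after that does the argument proceed along the lines of your second and third steps (action of $Z_L$ and $L$, irreducibility of $\sigma$, density of $C^0(\overline N_0,C)\otimes\sigma$, and the fact that $V(\overline N_0 P/P)$ generates the full induction). So the later stages of your plan are broadly in the right spirit, but without a replacement for the invalid limit-extraction — e.g.\ the mod-$\varpi_C$ plus successive-approximation mechanism — the proposal does not constitute a proof.
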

\begin{proof}
We choose an $L\cap K$-invariant norm $\lvert\cdot\rvert$ on $\sigma$ with unit ball $\sigma^0$ and $\lvert\sigma\rvert = \lvert C\rvert$.
By the assumption we know that $v \ne 0$ (otherwise $f(K) \subset \varpi_{C}^{k}\sigma^{0}$), and for convenience we scale $v$ such that $|v| = 1$.
We get a $K$-invariant norm $\lVert\cdot\rVert$ on $(\Ind_{P}^{G}\sigma)^{\cts}$, as above, and we let $((\Ind_{P}^{G}\sigma)^{\cts})^{0}$ be its unit ball.
Then condition (i) above just means that $\lVert f\rVert = 1$.
We introduce the following notation.
For any open and closed subset $X\subset G/P$, set $V(X) := \{f\in (\Ind_{P}^{G}\sigma)^{\cts}\mid \supp(f)\subset X\}$.
This is a closed subspace of $(\Ind_{P}^{G}\sigma)^{\cts}$ and we have $(\Ind_{P}^{G}\sigma)^{\cts} = V(X)\oplus V((G/P)\setminus X)$ as Banach spaces. 
(Namely, $\lVert f_{1} + f_{2}\rVert = \max(\lVert f_{1}\rVert,\lVert f_{2}\rVert)$ for $f_{1}\in V(X)$ and $f_{2}\in V((G/P)\setminus X)$).
We set $\lVert f\rVert_{X} := \lVert f|_{X}\rVert$.

We say that $f \in \pi$ satisfies $\mathcal{P}(\overline{N}_{0},f,k)$ if $f(K) \subset \sigma^0$ and conditions (ii), (iii) in the lemma hold.

First we prove that for any $k\in \Z_{>0}$ and for any compact open subgroup $\overline{N}'_{0}\subset \overline{N}_{0}$ there exists $f\in \pi$ with $\lVert f\rVert = 1$
such that $\mathcal{P}(\overline{N}'_{0},f,k)$ holds.
There exists $z\in Z_{L}$ such that $z\overline{N}_{0}z^{-1}\subset \overline{N}'_{0}$.
We set $X := \overline{N}_{0}P/P\subset G/P$ and $Y := (G/P)\setminus X$.
The element $z$ induces a topological isomorphism $(\Ind_{P}^{G}\sigma)^{\cts}\congto (\Ind_{P}^{G}\sigma)^{\cts}$ which sends $V(X)$ to $V(zX)$.
Hence it induces a topological isomorphism $V(X)\congto V(zX)$ and likewise $V(Y)\congto V(zY)$.
Take $r_{1}\in \Z_{\ge 0}$ (resp.\ $r_{2}\in \Z_{\le 0}$) such that $\lVert zf\rVert_{zX}\ge \lvert \varpi_{C}^{r_{1}}\rvert \lVert f\rVert_{X}$ (resp.\ $\lVert zf\rVert_{zY}\le \lvert \varpi_{C}^{r_{2}}\rvert\lVert f\rVert_{Y}$) for any $f\in V(X)$ (resp.\ $f\in V(Y)$).

For a given $k\in\Z_{> 0}$ (and $z \in Z_L$ as above), put $k' := k + r_{1} - r_{2} - \min(\val(\omega_{\sigma}(z)),0) \in \Z_{>0}$, where $\omega_{\sigma}$ denotes the central character of $\sigma$.
We take $f\in \pi$, $\lVert f\rVert = 1$ such that $\mathcal{P}(\overline{N}_{0},f,k')$ holds.
Then we have $\lVert f\rVert_{Y}\le \lvert\varpi_{C}^{k'}\rvert < 1$.
Since $\lVert f\rVert = \max(\lVert f\rVert_{X},\lVert f\rVert_{Y})$ we have $\lVert f\rVert_{X}= 1$.
Therefore $\lVert zf\rVert_{zX} \ge \lvert\varpi_{C}\rvert^{r_{1}}$ and $\lVert zf\rVert_{zY} \le \lvert\varpi_{C}\rvert^{k' + r_{2}}$.
Hence $\varpi_{C}^{-r_{1}}zf$ satisfies $\lVert \varpi_{C}^{-r_{1}}zf\rVert_{X} \ge 1$ and $\lVert \varpi_{C}^{-r_{1}}zf\rVert_{zY} \le \lvert\varpi_{C}\rvert^{k - \min(\val(\omega_{\sigma}(z)),0)}\le \lvert \varpi_{C}\rvert^{k}$.
Taking $r\in\Z_{\ge0}$ such that $f' = \varpi_{C}^{r - r_{1}}zf$ has norm $\lVert f'\rVert = 1$, 
we have $\lVert f'\rVert_{zY} \le \lvert\varpi_{C}\rvert^{k + r}\le \lvert\varpi_{C}\rvert^{k}$.
If $x\in K\setminus \overline{N}'_{0}(P\cap K)$, then the image of $x$ in $G/P$ is not in $\overline{N}'_{0}P/P$.
Hence $z^{-1}xP/P = z^{-1}xzP/P$ is not in $\overline{N}_{0}P/P$ since $\overline{N}_{0}\subset z^{-1}\overline{N}'_{0}z$.
Therefore $z^{-1}xP/P\notin X = (G/P)\setminus Y$, hence $xP/P\in zY$.
Hence $\lvert f'(x)\rvert\le \lVert f'\rVert_{zY}\le \lvert\varpi_{C}^{k}\rvert$, i.e.\ $f'(x)\in \varpi_{C}^{k}\sigma^{0}$.
For $n\in \overline{N}'_{0}$, if $n\in zY$, then $f'(n)\in \varpi_{C}^{k}\sigma^{0}$, as we have proved.
If $n\in zX$, then $z^{-1}nz\in \overline{N}_{0}$.
Hence $f'(n) = \varpi_{C}^{r - r_{1}}f(z^{-1}n) = \varpi_{C}^{r - r_{1}}\omega_{\sigma}(z)f(z^{-1}nz)\in \varpi_{C}^{r - r_{1}}\omega_{\sigma}(z)(Cv + \varpi_{C}^{k'}\sigma^{0}) =  Cv + \varpi_{C}^{k' + r - r_{1}}\omega_{\sigma}(z)\sigma^{0}$.
We have $k' + r - r_{1} = k + r - r_{2} - \min(\val(\omega_{\sigma}(z)),0) \ge k - \val(\omega_{\sigma}(z))$, as $r \ge 0$ and $r_{2} \le 0$.
Hence $\varpi_{C}^{k' + r - r_{1}}\omega_{\sigma}(z)\in \varpi_{C}^{k}\mathcal{O}_{F}$ and $\mathcal{P}(\overline{N}'_{0},f',k)$ holds.

Now we prove the lemma.
Let $\pi(\overline{N}_{0},k)$ be the $\mathcal{O}_C$-submodule of $((\Ind_{P}^{G}\sigma)^{\cts})^{0}$ consisting all $f\in \pi$ which satisfy $\mathcal P(\overline{N}_{0},f,k)$. It is $\overline{N}_0$-stable.
Define a subspace $V(\overline{N}_{0},v)$ of $V(\overline{N}_{0}P/P)$ as the space of functions $f\in V(\overline{N}_{0}P/P)$ such that $f(n)\in Cv$ for any $n\in\overline{N}_{0}$.
This is a closed subspace of $V(\overline{N}_{0}P/P)$ and the restriction to $\overline{N}_{0}$ gives an isometric isomorphism $V(\overline{N}_{0},v) \congto C^{0}(\overline{N}_{0},Cv)$, where $C^{0}(\overline{N}_{0},Cv)$ is the space of continuous functions $f\colon \overline{N}_{0}\to Cv$ with the supremum norm.
Hence the submodule $V(\overline{N}_{0},v)^{0}$ corresponds to $C^{0}(\overline{N}_{0},Cv)^{0} = C^{0}(\overline{N}_{0},\mathcal{O}_{C}v)$. (The last equality follows from $|v| = 1$.)
We prove that for any $1\le j\le k$ and for any $h\in V(\overline{N}_{0},v)^{0}$ there exists $f\in \pi(\overline{N}_{0},k)$ such that $\lVert h - f\rVert\le \lvert\varpi_{C}^{j}\rvert$ by induction on $j$.

Let $j = 1$.
We first note that $(\Ind_{P}^{G}\sigma)^{\cts}\cong (\Ind_{P\cap K}^{K}\sigma)^{\cts}$ (as $PK = G$)
and hence
\begin{equation}
  ((\Ind_{P}^{G}\sigma)^{\cts})^{0}/\varpi_{C}((\Ind_{P}^{G}\sigma)^{\cts})^{0} \cong (\Ind_{P\cap K}^{K}\sigma^{0}/\varpi_{C}\sigma^{0})^{\sm}\label{eq:red-parab}
\end{equation}
as $K$-representations.
We introduce several subspaces of $(\Ind_{P\cap K}^{K}\sigma^{0}/\varpi_{C}\sigma^{0})^{\sm}$.
For an open and closed subset $X\subset G/P\simeq K/(P\cap K)$, we put $\overline{V}(X) := \{f\in (\Ind_{P\cap K}^{K}\sigma^{0}/\varpi_{C}\sigma^{0})^{\sm}\mid \supp(f)\subset X\}$.
The restriction to $\overline{N}_{0}$ induces an isomorphism $\overline{V}(\overline{N}_{0}(P\cap K)/(P\cap K))\congto C^{\infty}(\overline{N}_{0},\sigma^{0}/\varpi_{C}\sigma^{0})$ as $\overline{N}_{0}$-representations, where $C^{\infty}(\overline{N}_{0},-)$ denotes the space of locally constant functions.
Let $\overline{v}\in \sigma^{0}/\varpi_{C}\sigma^{0}$ be the image of $v$.
We denote the inverse image of the space of locally constant functions $C^{\infty}(\overline{N}_{0},(\mathcal{O}_{C}/(\varpi_{C}))\overline{v})$ in $\overline{V}(\overline{N}_{0}(P\cap K)/(P\cap K))$ by $\overline{V}(\overline{N}_{0},\overline{v})$.

Let $\overline{\pi}(\overline{N}_{0},k)$ be the image of $\pi(\overline{N}_{0},k)\to (\Ind_{P\cap K}^{K}\sigma^{0}/\varpi_{C}\sigma^{0})^{\sm}$ (via the isomorphism~\eqref{eq:red-parab}).
Then $\overline{\pi}(\overline{N}_{0},k)$ is non-zero, since there exists $f\in \pi(\overline{N}_{0},k)$ with $\lVert f\rVert =1$ by assumption.
We also have $\overline{\pi}(\overline{N}_{0},k)\subset \overline{V}(\overline{N}_{0},\overline{v})$ by the definition of $\pi(\overline{N}_{0},k)$.
We prove that equality holds.
The subspace $\overline{\pi}(\overline{N}_{0},k)$ is non-zero and $\overline{N}_{0}$-stable, so it contains non-zero $\overline{N}_{0}$-fixed vectors.
The space of $\overline{N}_{0}$-fixed vectors in $\overline{V}(\overline{N}_{0},\overline{v}) \cong C^{\infty}(\overline{N}_{0},(\mathcal{O}_{C}/(\varpi_{C}))\overline{v})$ is one-dimensional and spanned by $g_{\overline{N}_{0}}$ which is defined by $g_{\overline{N}_{0}}(n) = \overline{v}$ for any $n\in\overline{N}_{0}$.
Hence $g_{\overline{N}_{0}}\in \overline{\pi}(\overline{N}_{0},k)$.
Recall that we have proved that for any compact open subgroup $\overline{N}'_{0}\subset \overline{N}_{0}$, there exists $f\in \pi$ with $\lVert f\rVert = 1$ such that $\mathcal P(\overline{N}'_{0},f,k)$ holds.
Hence by the same argument, for any compact open subgroup $\overline{N}'_{0}\subset \overline{N}_{0}$ we have $g_{\overline{N}'_{0}}\in \overline{\pi}(\overline{N}'_{0},k) \subset \overline{\pi}(\overline{N}_{0},k)$.
Since these elements generate $\overline{V}(\overline{N}_{0},\overline{v})$ as an $\overline{N}_{0}$-representation, we deduce $\overline{\pi}(\overline{N}_{0},k) = \overline{V}(\overline{N}_{0},\overline{v})$.
Now for any $h\in V(\overline{N}_{0},v)^{0}$, the composition $\overline{N}_{0}\xrightarrow{h} \mathcal{O}_{C}v\to (\mathcal{O}_{C}/(\varpi_{C}))\overline{v}$ lies in $\overline{V}(\overline{N}_{0},\overline{v})$.
Hence there exists $f\in \pi(\overline{N}_{0},k)$ such that $h - f\in \varpi_{C} ((\Ind_{P}^{G}\sigma)^{\cts})^{0}$.
Therefore $\lVert h - f\rVert \le \lvert\varpi_{C}\rvert$.

Let $j > 1$.
From the case of $j = 1$, there exists $f\in \pi(\overline{N}_{0},k)$ such that $\lVert h - f\rVert\le \lvert\varpi_{C}\rvert$.
Namely, $h - f = \varpi_{C} f'$ for some $f'\in ((\Ind_{P}^{G}\sigma)^{\cts})^{0}$.
Take $f_{1},f'_{1}\in V(\overline{N}_{0}P/P)$ and $f_{2},f'_{2}\in V((G/P)\setminus (\overline{N}_{0}P/P))$ such that $f = f_{1} + f_{2}$ and $f' = f'_{1} + f'_ {2}$.
Since $h\in V(\overline{N}_{0}P/P)$, we have $h - f_{1} = \varpi_{C}f'_{1}$ and $-f_{2} = \varpi_{C}f'_{2}$.
By the definition of $\pi(\overline{N}_{0},k)$, we have $\lVert f_{2}\rVert\le\lvert\varpi_{C}^{k}\rvert$ and therefore $\lVert f'_{2}\rVert\le\lvert\varpi_{C}^{k - 1}\rvert$.
By the Hahn--Banach theorem there is a continuous linear map $a\colon \sigma\to C$ such that $a(v) = 1$ and $\lvert a(w)\rvert \le \lvert w\rvert$ for any $w\in\sigma$.
Define $f''_{1},f'''_{1}\in V(\overline{N}_{0}P/P)$ by $f''_{1}(n) = a(f'_{1}(n))v$ for $n\in\overline{N}_{0}$ and $f'''_{1} = f'_{1} - f''_{1}$.
By the definition of $\pi(\overline{N}_{0},k)$, we have $f(n)\in Cv + \varpi_{C}^{k}\sigma^{0}$ for any $n\in\overline{N}_{0}$ and we also have $h(n)\in Cv$ for any $n\in\overline{N}_{0}$.
Hence $f'_{1}(n) = \varpi_{C}^{-1}(h(n) - f_{1}(n))\in Cv + \varpi_{C}^{k - 1}\sigma^{0}$.
For $n\in \overline{N}_{0}$ take $c\in C$ such that $f'_{1}(n) - cv\in \varpi_{C}^{k - 1}\sigma^{0}$.
Then $\lvert f''_{1}(n) - cv\rvert = \lvert a(f'_{1}(n) - cv)v\rvert\le \lvert f'_{1}(n) - cv \rvert\le \lvert\varpi_{C}^{k - 1}\rvert$.
Hence $\lvert f'''_{1}(n) \rvert\le \max(\lvert f'_{1}(n) - cv\rvert,\lvert f''_{1}(n) - cv\rvert)\le \lvert\varpi_{C}^{k - 1}\rvert$ for any $n\in\overline{N}_{0}$.
Therefore $\lVert f'''_{1}\rVert\le\lvert\varpi_{C}^{k - 1}\rvert$, since $f'''_{1}\in V(\overline{N}_{0}P/P)$.
By definition, $f''_{1}\in V(\overline{N}_{0},v)$.
We also have that for $\lVert f''_{1}\rVert = \lVert f'_{1} - f'''_{1}\rVert \le \max(\lVert f'\rVert,\lVert f'''_{1}\rVert)\le 1$.
Hence $f''_{1}\in V(\overline{N}_{0},v)^{0}$.
Therefore by the inductive hypothesis there exists $f''\in \pi(\overline{N}_{0},k)$ such that $\lVert f''_{1} - f''\rVert\le\lvert\varpi_{C}^{j - 1}\rvert$.
Then $f + \varpi_{C}f'' \in \pi(\overline{N}_{0},k)$ and \begin{align*}
  \lVert h - (f + \varpi_{C}f'')\rVert &= \lvert\varpi_{C}\rvert \lVert f' - f''\rVert = \lvert\varpi_{C}\rvert \lVert f''_1 + f'''_1 + f'_{2} - f''\rVert \\
& = \lvert\varpi_{C}\rvert \lVert f'_{2} + (f''_{1} - f'') + f'''_{1}\rVert \le \lvert\varpi_{C}^{j}\rvert,
\end{align*}
as required.

In particular, putting $j = k$, there exists $f = f_{k}\in \pi$ such that $\lVert h - f_{k}\rVert\le\lvert\varpi_{C}^{k}\rvert$.
Hence $h = \lim_{k\to\infty}f_{k}\in \pi$.
In other words, for any continuous function $F\colon \overline{N}_{0}\to C$, $h = F\otimes v\in \pi$, where $F\otimes v$ is defined by $\overline{N}_{0}\ni n\mapsto F(n)v$ and we regard this as an element in $V(\overline{N}_{0}P/P)$ as above.
More generally, let $F$ be a continuous function $\overline{N}\to C$ such that $\supp(F)$ is compact and define $f = F\otimes v$.
We extend this to $\overline{N}P$ by $f(np) = \sigma(p)^{-1}f(n)$ and further to $G$ by $f|_{G\setminus \overline{N}P} = 0$.
We take $z\in Z_{L}$ such that $z\supp(F)z^{-1}\subset \overline{N}_{0}$.
Then $\supp(zf) \in \overline{N}_{0}P/P$ and for $n\in \overline{N}_{0}$ we have $(zf)(n) = \omega_\sigma(z) f(z^{-1} nz) \in Cv$.
Hence $zf\in \pi$ and therefore $f\in \pi$.

Let $\ell\in L$ and suppose $F$ is a continuous function $\overline{N}\to C$ such that $\supp(F)$ is compact.
We define $(\ell F)(n) = F(\ell^{-1}n\ell)$ for $n\in \overline{N}$.
Then we have $F\otimes \ell v = \ell(\ell^{-1}F\otimes v)\in \pi$.
As $\sigma$ is irreducible and by continuity, we have $F\otimes v'\in \pi$ for any $v'\in \sigma$.
(If $\supp(F) \subset \overline N \cap K$, then $F \otimes v_i \to F \otimes v$ if $v_i \to v$, as $\lVert F\otimes v\rVert = \sup_{n \in \overline N} \lvert F(n)\rvert\cdot \lvert v\rvert$ in this case.
In general, use the action of $L$ to reduce to that case.)
In particular, $C^{0}(\overline{N}_{0},C)\otimes_{C}\sigma \subset \pi$.

Now let $f\colon \overline{N}_{0}\to \sigma$ be a continuous function and define $f_{k}\colon \overline{N}_{0}\xrightarrow{f} \sigma\to \sigma/\varpi_{C}^k\sigma^{0}$ for $k \ge 1$.
This is a locally constant function, and therefore its image is a finite subset of $\sigma/\varpi_{C}^k\sigma^{0}$.
Hence there exists $f'_{k}\in C^{0}(\overline{N}_{0},C)\otimes_{C}\sigma$ such that $f(n) - f'_{k}(n)\in \varpi_{C}^{k}\sigma^{0}$ for any $n\in\overline{N}_{0}$.
Consider $f$, $f'_{k}$ as elements of $V(\overline{N}_{0}P/P)$ as above.
Then $\lVert f - f'_{k}\rVert\le\lvert\varpi_{C}^{k}\rvert$ and hence $f = \lim_{k\to\infty}f'_{k} \in\pi$.
Now $C^{0}(\overline{N}_{0},\sigma)\simeq V(\overline{N}_{0}P/P)$ generates $(\Ind_{P}^{G}\sigma)^{\cts}$ as a $G$-representation.
Hence $\pi = (\Ind_{P}^{G}\sigma)^{\cts}$.
\end{proof}

We use this lemma in the following form, where again we choose an $L\cap K$-invariant defining norm $\lvert\cdot\rvert$ on $\sigma$ and denote by $\lVert\cdot\rVert$ the induced $K$-invariant norm on $(\Ind_{P}^{G}\sigma)^{\cts}$.
\begin{cor}\label{cor:irreducibility criterion}
Let $\alggrp{P} = \alggrp{L}\alggrp{N}$ be a parabolic subgroup, $\overline{\alggrp{P}} = \alggrp{L}\overline{\alggrp{N}}$ be the opposite parabolic subgroup, and assume that $P\cap K = (L\cap K)(N\cap K)$.
Let $\overline{N}_{0}\subset \overline{N}$ be a compact open subgroup, $\sigma$ an irreducible Banach representation of $L$ having a central character, $\pi\subset (\Ind_{P}^{G}\sigma)^{\cts}$ a closed subrepresentation, and $v\in \sigma$.
Assume that we have sequences $h_{n}\in \pi$ and $h'_{n}\in (\Ind_{P}^{G}\sigma)^{\cts}$ for $n \ge 1$ such that $\supp(h'_{n})\subset \overline{N}_{0}P$, $h'_{n}(x)\in Cv$ for any $x\in \overline{N}_{0}$, $\inf_{n}\lVert h'_{n}\rVert > 0$, and $\lim_{n\to\infty}(h_{n} - h'_{n}) = 0$.
Then we have $\pi = (\Ind_{P}^{G}\sigma)^{\cts}$.
\end{cor}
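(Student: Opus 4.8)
The plan is to verify the three conditions in Lemma~\ref{lem:irred1} for the given $\pi$, $\sigma$, $v$ and a suitable compact open subgroup of $\overline N \cap K$, and then invoke that lemma. Write $\sigma^{0}$ for the unit ball of the chosen $L\cap K$-invariant norm $\lvert\cdot\rvert$ on $\sigma$. Since Lemma~\ref{lem:irred1} requires $\overline N_{0} \subset \overline N \cap K$, the first step is to arrange this. Pick $z \in Z_{L}$ with $z\overline N_{0}z^{-1} \subset \overline N \cap K$; such $z$ exists because conjugation by a deep enough element of the relevant sub-semigroup of $Z_{L}$ contracts the compact open subgroup $\overline N_{0}$ into any prescribed neighbourhood of $1$ in $\overline N$. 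Replacing $h_{n}$ by $zh_{n}$, $h_{n}'$ by $zh_{n}'$, $v$ by $\sigma(z)v$ and $\overline N_{0}$ by $z\overline N_{0}z^{-1}$, one checks that all four hypotheses persist: $zh_{n}\in\pi$ since $\pi$ is $G$-stable; $\supp(zh_{n}') = z\,\supp(h_{n}') \subset (z\overline N_{0}z^{-1})P$ because $zP = P$; for $x = zyz^{-1}$ with $y \in \overline N_{0}$ one gets $(zh_{n}')(x) = h_{n}'(yz^{-1}) = \sigma(z)h_{n}'(y) \in C\sigma(z)v$ from the rule $f(g\ell n) = \sigma(\ell)^{-1}f(g)$; $\inf_{n}\lVert zh_{n}'\rVert > 0$ because $z$ acts on $(\Ind_{P}^{G}\sigma)^{\cts}$ as a topological automorphism, so $\lVert z(\cdot)\rVert$ is equivalent to $\lVert\cdot\rVert$; and $z(h_{n} - h_{n}') \to 0$ by continuity. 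So we may assume $\overline N_{0} \subset \overline N \cap K$.

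Next I would normalize: since each $h_{n}' \ne 0$, scale $h_{n}$ and $h_{n}'$ by a common scalar $\alpha_{n} \in C^{\times}$ so that $\lvert\varpi_{C}\rvert < \lVert\alpha_{n}h_{n}'\rVert \le 1$ (possible as $C$ is discretely valued and $\lVert h_{n}'\rVert > 0$). This keeps $h_{n}\in\pi$ and the support and line conditions on $h_{n}'$, and $h_{n} - h_{n}' \to 0$ still holds because $\lvert\alpha_{n}\rvert \le (\inf_{m}\lVert h_{m}'\rVert)^{-1}$ is bounded. Now fix $k \in \Z_{>0}$ and, since $h_{n} - h_{n}' \to 0$, pick $n$ with $\lVert h_{n} - h_{n}'\rVert \le \lvert\varpi_{C}^{k}\rvert$; put $f := h_{n} \in \pi$. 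Since $\lVert h_{n} - h_{n}'\rVert \le \lvert\varpi_{C}^{k}\rvert \le \lvert\varpi_{C}\rvert < \lVert h_{n}'\rVert$, the ultrametric inequality yields $\lVert f\rVert = \lVert h_{n}'\rVert \in (\lvert\varpi_{C}\rvert, 1]$.

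It remains to check conditions (i)--(iii) of Lemma~\ref{lem:irred1} for $f$. Condition (i), namely $f(K) \subset \sigma^{0}$ and $f(K) \not\subset \varpi_{C}\sigma^{0}$, is exactly $\lvert\varpi_{C}\rvert < \lVert f\rVert \le 1$. For (ii), observe that $K \cap \overline N_{0}P = \overline N_{0}(P\cap K)$: if $x = \overline n p$ with $\overline n \in \overline N_{0} \subset K$ and $p \in P$, then $p = \overline n^{-1}x \in P\cap K = (L\cap K)(N\cap K)$. Hence for $x \in K \setminus \overline N_{0}(P\cap K)$ we have $x \notin \overline N_{0}P \supset \supp(h_{n}')$, so $f(x) = (h_{n} - h_{n}')(x)$ with $\lvert f(x)\rvert \le \lvert\varpi_{C}^{k}\rvert$, i.e.\ $f(x) \in \varpi_{C}^{k}\sigma^{0}$. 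For (iii), if $x \in \overline N_{0}$ then $f(x) = h_{n}'(x) + (h_{n} - h_{n}')(x) \in Cv + \varpi_{C}^{k}\sigma^{0}$, using the hypothesis $h_{n}'(x) \in Cv$ and the bound $\lvert(h_{n}-h_{n}')(x)\rvert \le \lVert h_{n}-h_{n}'\rVert \le \lvert\varpi_{C}^{k}\rvert$. Thus Lemma~\ref{lem:irred1} applies and gives $\pi = (\Ind_{P}^{G}\sigma)^{\cts}$. The argument is routine bookkeeping with sup-norms; the only point deserving care is the preliminary conjugation by $z$, and verifying that the support condition $\supp(h_{n}') \subset \overline N_{0}P$ and the line condition $h_{n}'(x) \in Cv$ transform correctly under it.
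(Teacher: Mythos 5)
Your proof is correct and follows essentially the same route as the paper: rescale so that $\lVert h_n'\rVert$ is (close to) $1$, use the ultrametric inequality to verify conditions (i)--(iii) of Lemma~\ref{lem:irred1} for $f = h_n$ with $n$ chosen so that $\lVert h_n - h_n'\rVert \le \lvert\varpi_C^k\rvert$, and then apply that lemma. The only differences are cosmetic: you scale $h_n'$ into $(\lvert\varpi_C\rvert,1]$ rather than replacing the norm on $\sigma$ by an equivalent one with $\lvert\sigma\rvert=\lvert C\rvert$ so that exact norm $1$ is attainable, and you spell out the preliminary conjugation by $z\in Z_L$ reducing to $\overline{N}_0\subset\overline{N}\cap K$, a point the paper's proof leaves implicit (and which is harmless in its application, where $\overline{N}_0=\overline{N}\cap K$).
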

\begin{proof}
By replacing $\lvert\cdot\rvert$ by an equivalent norm we may assume that $\lvert\sigma\rvert = \lvert C\rvert$.
Note that $h_n' \ne 0$ for all $n$, as $\inf_{n}\lVert h'_{n}\rVert > 0$.
Take $c_{n}\in C^{\times}$ such that $\lVert c_{n}h'_{n}\rVert = 1$.
Then $\lvert c_{n}\rvert$ is bounded above because $\inf_{n}\lVert h'_{n}\rVert > 0$.
Therefore $\lim_{n\to\infty}(c_{n}h_{n} - c_{n}h'_{n}) = 0$.
By replacing $h_{n},h'_{n}$ with $c_{n}h_{n},c_{n}h'_{n}$ respectively, we may assume that $\lVert h'_{n}\rVert = 1$ for all $n$.
Let $k\in\Z_{>0}$ and take $n$ such that $\lVert h_{n} - h'_{n}\rVert\le \lvert\varpi_{C}^{k}\rvert$.
Since $\lVert h'_{n}\rVert =1$, we also have $\lVert h_{n}\rVert =1$.
If $x\in K$, then $h_{n}(x) - h'_{n}(x)\in \varpi_{C}^{k}\sigma^{0}$. 
Therefore, if $x\in \overline{N}_{0}$ we have $h_{n}(x)\in Cv+\varpi_{C}^{k}\sigma^{0}$ (as $h'_n(x)\in Cv$)
and if $x \in K\setminus \overline{N}_{0}(P\cap K)$ we have $h_{n}(x)\in\varpi_{C}^{k}\sigma^{0}$ (as $h'_n(x)=0$).
\end{proof}

\subsection{The group \texorpdfstring{$\GL_{2}(F)$}{GL\_2(F)}}
\label{sec:group-gl2}
When $G = \GL_{2}(F)$ for some finite extension $F$ of $\Q_{p}$ we have the following irreducibility criterion.

Let $\alggrp{G} := \Res_{F/\Q_{p}}\GL_{2}$.
Let $\alggrp{B}$ be the subgroup of upper-triangular matrices and $\alggrp{T}$ the subgroup of diagonal matrices in $\alggrp{G}$.
Let $\chi\colon T\to C^{\times}$ be a character given by $\chi(\diag(t_{1},t_{2})) = \chi_{1}(t_{1})\chi_{2}(t_{2})$, where $\chi_{i}\colon F^{\times}\to C^{\times}$ is a continuous character for $i = 1,2$.
Let $\Hom(F,\overline C)$ be the set of continuous field homomorphisms $F\to \overline C$.
\begin{thm}\label{thm:GL2}
The Banach representation $(\Ind_{B}^{G}\chi)^{\cts}$ is reducible if and only if there exists $(k_{\kappa})_{\kappa}\in\Z_{\le 0}^{\Hom(F,\overline C)}$ such that $\chi_{1}\chi_{2}^{-1}(t) = \prod_{\kappa\in\Hom(F,\overline C)}\kappa(t)^{k_{\kappa}}$.
\end{thm}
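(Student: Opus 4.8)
\emph{Plan.} The ``if'' direction I would do directly, as indicated in the introduction: if $\chi_1\chi_2^{-1}(t)=\prod_\kappa\kappa(t)^{k_\kappa}$ with all $k_\kappa\le 0$, put $a_\kappa:=-k_\kappa\ge 0$ and let $W$ be the irreducible algebraic representation of $\alggrp G:=\Res_{F/\Q_p}\GL_2$ whose $\kappa$-component is $\mathrm{Sym}^{a_\kappa}$ of the standard representation. A short Frobenius reciprocity computation (on the locally analytic vectors) shows that $W\otimes(\chi_1\circ\det)$ embeds $G$-equivariantly into $(\Ind_B^G\chi)^\cts$, so the latter has a non-zero proper finite-dimensional subrepresentation and is reducible. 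For the ``only if'' direction I would apply Theorem~\ref{thm:criterion} with $\alggrp G=\Res_{F/\Q_p}\GL_2$ (whose derived group $\Res_{F/\Q_p}\SL_2$ is simply connected of type $A$, so no hypothesis on $p$ is needed), $\alggrp P=\alggrp B$, $\alggrp L=\alggrp T$, $\sigma=\chi$, and first pin down the data $(\alggrp Q,\alggrp L_{\alggrp Q},\sigma_0,\tau)$ of \cite[Lemma~2.51]{unitary}. Under $\Lie(G)\otimes_{\Q_p}C\cong\bigoplus_\kappa\mathfrak{gl}_2$ the simple module $L(\sigma_0')$ has highest weight $-d\chi=(-d\chi_{1,\kappa},-d\chi_{2,\kappa})_\kappa$, so its $\kappa$-component is finite-dimensional exactly when $d(\chi_1\chi_2^{-1})_\kappa\in\Z_{\le 0}$; since the only $\Q_p$-parabolics between $\alggrp B$ and $\alggrp G$ are those two, we get a dichotomy.

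If some $d(\chi_1\chi_2^{-1})_\kappa\notin\Z_{\le 0}$, then $\alggrp Q=\alggrp B$, $\alggrp L_{\alggrp Q}=\alggrp T$, and condition~(iii) of Theorem~\ref{thm:criterion} holds trivially because $(\Ind_T^T\tau)^\sm\cong\tau$ is one-dimensional; hence $(\Ind_B^G\chi)^\cts$ is irreducible, and indeed $\chi_1\chi_2^{-1}$ is not non-positive algebraic in this case. Otherwise $d(\chi_1\chi_2^{-1})_\kappa\in\Z_{\le 0}$ for all $\kappa$, one may take $\alggrp Q=\alggrp L_{\alggrp Q}=\alggrp G$, and (normalizing the decomposition, which is unique up to a smooth character of $L_{\alggrp Q}$) $\tau=\tau_1\otimes 1$ with $\tau_1:=(\chi_1\chi_2^{-1})\prod_\kappa\kappa^{-d(\chi_1\chi_2^{-1})_\kappa}$ a \emph{smooth} character of $F^\times$. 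Now condition~(iii) asks that every irreducible subrepresentation of the smooth principal series $(\Ind_B^G\tau)^\sm$ of $\GL_2(F)$ be dense in $(\Ind_B^G\tau)^\cts$. One checks that $\chi_1\chi_2^{-1}$ is non-positive algebraic exactly when $\tau_1=1$, and then $(\Ind_B^G\tau)^\sm=(\Ind_B^G 1)^\sm$ contains the trivial representation as a closed one-dimensional subrepresentation, which is not dense, so $(\Ind_B^G\chi)^\cts$ is reducible — consistently with the ``if'' direction. If $\tau_1\ne 1$ and $(\Ind_B^G\tau)^\sm$ is irreducible, it is automatically dense in $(\Ind_B^G\tau)^\cts$ (locally constant functions obeying the transformation law are uniformly dense in the continuous ones), so $(\Ind_B^G\chi)^\cts$ is irreducible. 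The remaining, and crucial, case is $\tau_1\ne 1$ with $(\Ind_B^G\tau)^\sm$ reducible: then $\tau$ does not extend to a character of $G$ through $\det$, so $(\Ind_B^G\tau)^\sm$ has length two and its unique irreducible subrepresentation $\pi_\sm$ is a twist of the Steinberg representation, and one computes $\lvert\tau_1(\varpi_F)\rvert>1$ (the $\GL_2$ shadow of the last exceptional condition in the introduction).

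To finish I would show that this $\pi_\sm$ is dense in $(\Ind_B^G\tau)^\cts$ by applying Corollary~\ref{cor:irreducibility criterion}, with $\overline N_0:=\overline N\cap K$ and $v:=1\in\tau=C$. Identifying $\overline N$ with $F$ via $x\mapsto\overline n(x)$ (the lower-triangular unipotent matrix with lower-left entry $x$), the subrepresentation $\pi_\sm$ is the kernel of a non-zero $G$-intertwining functional which, on the big cell $\overline N P$, is the convergent integral $\ell(f)=\int_F f(\overline n(x))\,dx$ (convergence uses $\lvert\tau_1(\varpi_F)\rvert>1$, which forces $f(\overline n(x))$ to decay as $\lvert x\rvert_F\to\infty$). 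Take $h_n'$ to be the smooth function supported on $\overline N_0 P$ with $h_n'(\overline n(x))=\mathbf 1_{\overline N_0}(x)v$; it satisfies $\supp(h_n')\subset\overline N_0 P$, $h_n'(\overline N_0)\subset Cv$, and $\lVert h_n'\rVert=1$ for all $n$. Take $h_n:=h_n'+c_n g_n$, where $g_n$ is the smooth function supported on $\overline N P$ with $g_n(\overline n(x))=\mathbf 1_{S_n}(x)v$ on a far-out shell $S_n=\{x:\lvert x\rvert_F=q^{a_n}\}$, $a_n\to\infty$, and $c_n\in C$ is the unique scalar making $\ell(h_n)=0$, namely $c_n=-\mathrm{vol}(\overline N_0)/\mathrm{vol}(S_n)$. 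Then $h_n\in\pi_\sm$; the Iwasawa decomposition of $\overline n(x)$ for $\lvert x\rvert_F=q^{a_n}$ gives $\lVert g_n\rVert\le\lvert\tau_1(\varpi_F)\rvert^{-a_n}$, while $\lvert c_n\rvert$ is comparable to $\lvert\mathrm{vol}(S_n)\rvert^{-1}$, and multiplying out yields $\lVert h_n-h_n'\rVert\lesssim\lvert q\rvert^{a_n}\to 0$. Corollary~\ref{cor:irreducibility criterion} then gives $\overline{\pi_\sm}=(\Ind_B^G\tau)^\cts$, and Theorem~\ref{thm:criterion} closes the argument.

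The step I expect to be the main obstacle is this last one: showing that the twist of Steinberg $\pi_\sm$, which is a \emph{proper} submodule of the smooth principal series, nonetheless becomes dense after completion. The delicate balance is between two sizes of the correction term $g_n$ — its sup-norm on $K$ must be small (which is precisely why $\tau_1$ must be ``expanding'', i.e.\ $\lvert\tau_1(\varpi_F)\rvert>1$), yet its contribution $\ell(g_n)$ to the intertwining functional must stay large enough to cancel $\ell(h_n')$ and push $h_n$ into the Steinberg. This tension is exactly what makes $(\Ind_B^G\chi)^\cts$ topologically irreducible even though $(\Ind_B^G\tau)^\sm$ has a one-dimensional subquotient.
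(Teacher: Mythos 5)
Your overall strategy is correct, but it is genuinely different from the paper's: the paper proves the ``if'' direction exactly as you do (a finite-dimensional space of algebraic functions inside $(\Ind_B^G\chi_1\chi_2^{-1}\boxtimes\mathbf 1)^\cts$, up to twist), and then disposes of the ``only if'' direction in one line by citing \cite[Theorem~3.9]{unitary}. You instead re-derive the converse inside this paper's toolkit: reduce via Theorem~\ref{thm:criterion} to a smooth character $\tau=\tau_1\boxtimes 1$, and in the one non-trivial case ($\tau_1\ne 1$ with $(\Ind_B^G\tau)^\sm$ reducible, i.e.\ $\tau_1=\lvert\cdot\rvert_F^{2}$ in this paper's unnormalized convention, so indeed $\lvert\tau_1(\varpi_F)\rvert=\lvert q\rvert^{-2}>1$) prove directly that the Steinberg twist is dense in $(\Ind_B^G\tau)^\cts$ by Corollary~\ref{cor:irreducibility criterion}. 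This is a $\GL_2$ miniature of the paper's $\GL_3$ argument, and it is simpler for a good reason: for $\GL_2$ one can write down elements of the Steinberg twist supported in the big cell by hand ($h_n'+c_ng_n$ with the integral over $\overline N$ killed), so none of the explicit Hecke-eigenvector machinery of Theorem~\ref{thm:explicit} is needed. Your case analysis ($\alggrp Q=\alggrp B$ versus $\alggrp Q=\alggrp G$, $\tau_1=1$ versus $\tau_1\ne 1$) is complete, and the norm estimates check out: on the shell $\lvert x\rvert_F=q^{a_n}$ the $K$-values of $g_n$ are $\tau_1(x)\cdot v$, of size $\lvert q\rvert^{2a_n}$, while $\lvert c_n\rvert=\lvert q\rvert^{1-a_n}$, so $\lVert h_n-h_n'\rVert\le\lvert q\rvert^{a_n+1}\to 0$ and $\inf_n\lVert h_n'\rVert=1$, exactly as Corollary~\ref{cor:irreducibility criterion} requires.

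One justification is wrong as stated, though the step it supports is correct and easily repaired. You claim that $\ell(f)=\int_F f(\overline n(x))\,dx$ converges on all of $(\Ind_B^G\tau)^\sm$ because $\lvert\tau_1(\varpi_F)\rvert>1$ ``forces $f(\overline n(x))$ to decay''. The opposite happens: writing $\overline n(x)=k(x)\,\diag(x,x^{-1})\,n(x^{-1})$ for $\lvert x\rvert_F>1$ gives $f(\overline n(x))=\tau_1(x)^{-1}f(k(x))$, whose $C$-absolute value grows like $\lvert\tau_1(\varpi_F)\rvert^{\,m}$ on $\lvert x\rvert_F=q^m$, and the shell contributions to $\ell$ have size about $\lvert q\,\tau_1(\varpi_F)^{-1}\rvert^{-m}=\lvert q\rvert^{-m}\to\infty$, so the full integral diverges precisely in the regime you need. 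Fortunately you only ever evaluate $\ell$ on functions compactly supported in the big cell, where it is a finite sum; what you must argue is that for such $f$ membership in the Steinberg twist is equivalent to $\ell(f)=0$. This follows not from an everywhere-defined intertwining integral but from uniqueness of the $\overline N$-invariant functional on $V_c(\overline N B)\cong C_c^\infty(F)$: the canonical surjection $\varphi$ of $(\Ind_B^G\tau)^\sm$ onto its one-dimensional quotient is nonzero on $V_c(\overline N B)$ (whose $G$-translates span) and is $\overline N$-equivariant there, hence proportional to $\ell$ on that subspace. This is exactly how the paper normalizes its $\varphi$ in the $\GL_3$ density argument (via the integral over the open cell applied only to functions supported there). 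With that correction your proof is complete.
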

When $F = \Q_{p}$, this is stated in \cite[Proposition 2.6]{MR2275644} (though we do not know a reference for the complete proof).
\begin{proof}
We first suppose that $C$ is sufficiently large, so in particular $\Hom(F,C) = \Hom(F,\overline C)$, i.e.\ $\alggrp G$ splits over $C$.
Assume that $\chi_{1}\chi_{2}^{-1}(t) = \prod_{\kappa\in\Hom(F,C)}\kappa(t)^{k_{\kappa}}$ for some $(k_{\kappa})\in\Z_{\le 0}^{\Hom(F,C)}$.
We have $(\Ind_{B}^{G}\chi)^{\cts}\simeq (\chi_{2}\circ\det)\otimes(\Ind_{B}^{G}\chi_{1}\chi_{2}^{-1}\boxtimes \mathbf{1})^{\cts}$, where $\mathbf{1}$ is the trivial character of $F^{\times}$.
The space of rational functions in $(\Ind_{B}^{G}\chi_{1}\chi_{2}^{-1}\boxtimes \mathbf{1})^{\cts}$ is an irreducible finite-dimensional (hence closed) $G$-subrepresentation.
Hence $(\Ind_{B}^{G}\chi)^{\cts}$ is reducible.
The converse follows from \cite[Theorem~\ref{unitary:thm:GLn-irred}]{unitary} (or \cite[Theorem~\ref{unitary:thm:rank-1}]{unitary}).

For general $C$ it remains to show for any finite Galois extension $C'/C$ that if $(\Ind_{B}^{G}\chi)^{\cts} \otimes_C C' = (\Ind_{B}^{G}\chi \otimes_C C')^{\cts}$ is reducible, then so is $(\Ind_{B}^{G}\chi)^{\cts}$.
We may assume that $C'$ is sufficiently large for the previous paragraph to hold.
If $(\Ind_{B}^{G}\chi \otimes_C C')^{\cts}$ is reducible, the previous paragraph then shows that it has a finite-dimensional subrepresentation that is generated by a $\overline U$-invariant vector, where $\overline{\alggrp{U}}$ is the unipotent radical of the opposite Borel subgroup.
(This is unaffected by the twist by $\chi_{2}\circ\det$.)
On the other hand, it is easy to check that the space of $\overline U$-invariants in $(\Ind_{B}^{G}\chi \otimes_C C')^{\cts}$ has dimension at most 1, hence exactly 1.
By Galois descent it is then defined over $C$, hence so is the subrepresentation it generates.
\end{proof}

\section{The group \texorpdfstring{$\GL_{3}(F)$}{GL\_3(F)}}
\label{sec:group-gl3}
Let $F$ be a finite extension of $\Q_{p}$, $\mathcal{O}_{F}$ the ring of integers, $\varpi_{F}$ a uniformizer of $F$, $q$ the cardinality of the residue field of $F$ and $\val \colon F^\times \onto \Z$ the normalized valuation of $F$.
We normalize the norm $\lvert\cdot\rvert_{F}$ on $F$ by $\lvert\varpi_{F}\rvert_{F} = q^{-1}$, namely $\lvert x\rvert_{F} = q^{-\val(x)}$.
Let $\alggrp{G} := \Res_{F/\Q_p} \GL_{3}$, $\alggrp{B}$ the subgroup of upper-triangular matrices, $\alggrp{T}$ the subgroup of diagonal matrices, and $\alggrp{U}$ the subgroup of unipotent upper-triangular matrices. Let $\chi\colon T\to C^{\times}$ be a continuous character given by $\chi(\diag(t_{1},t_{2},t_{3})) = \chi_{1}(t_{1})\chi_{2}(t_{2})\chi_{3}(t_{3})$, where $\chi_{i}\colon F^{\times}\to C^{\times}$ is a continuous character for $i = 1,2,3$.
The main theorem of this paper is the following.
\begin{thm}\label{thm:main}
The Banach representation $(\Ind_{B}^{G}\chi)^{\cts}$ is reducible if and only if there exists $(k_{\kappa})_{\kappa}\in\Z_{\le 0}^{\Hom(F,\overline C)}$ such that $\chi_{1}\chi_{2}^{-1}(t) = \prod_{\kappa\in\Hom(F,\overline C)}\kappa(t)^{k_{\kappa}}$ or $\chi_{2}\chi_{3}^{-1}(t) = \prod_{\kappa\in\Hom(F,\overline C)}\kappa(t)^{k_{\kappa}}$.
\end{thm}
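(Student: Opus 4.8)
The plan is to prove the two implications separately; the ``if'' direction is soft and the ``only if'' direction carries all the weight.

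\emph{The ``if'' direction.} Say $\chi_1\chi_2^{-1}$ is non-positive algebraic (the case of $\chi_2\chi_3^{-1}$ is symmetric, with block type $(1,2)$). I would take $\alggrp P$ to be the standard parabolic of block type $(2,1)$ with Levi $\alggrp L$, use transitivity of continuous parabolic induction to write $(\Ind_B^G\chi)^{\cts}\cong(\Ind_P^G\sigma)^{\cts}$ with $\sigma\cong(\Ind_{B\cap\GL_2}^{\GL_2}(\chi_1\otimes\chi_2))^{\cts}\boxtimes\chi_3$, and invoke the construction in the proof of Theorem~\ref{thm:GL2}: the $\GL_2(F)$-factor of $\sigma$ contains a nonzero proper finite-dimensional — hence closed — subrepresentation, so $\sigma$ has a nonzero proper closed $L$-subrepresentation $\sigma'$. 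Then $\{f\in(\Ind_P^G\sigma)^{\cts}:f(G)\subset\sigma'\}$ is a closed $G$-subrepresentation of $(\Ind_P^G\sigma)^{\cts}$; it is nonzero since $\sigma'\ne 0$, and proper since the evaluation $f\mapsto f(e)$ is onto $\sigma\ne\sigma'$. Hence $(\Ind_B^G\chi)^{\cts}$ is reducible.

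\emph{The ``only if'' direction.} Suppose $(\Ind_B^G\chi)^{\cts}$ is reducible. I would first treat the case $\chi$ smooth and then bootstrap to general $\chi$ with Theorem~\ref{thm:criterion}. For general $\chi$, write $\chi\cong\chi_0\otimes\tau$ with $\chi_0$ locally algebraic and $\tau$ smooth, and let $\alggrp Q\supseteq\alggrp B$ be the associated standard parabolic; since the principal series of $T$ is irreducible, $\alggrp Q\ne\alggrp B$. If $\alggrp{L}_{\alggrp Q}$ is a proper Levi (block type $(2,1)$ or $(1,2)$), Theorem~\ref{thm:criterion} identifies the reducibility with that of $(\Ind_{B\cap L_Q}^{L_Q}\tau)^{\cts}$, so Theorem~\ref{thm:GL2} gives that the relevant component $\tau_i\tau_{i+1}^{-1}$ of $\tau$ is trivial, while the locally analytic $\alggrp{L}_{\alggrp Q}$-structure on $L(\chi_0')$ forces $\chi_{0,i}\chi_{0,i+1}^{-1}$ — hence $\chi_i\chi_{i+1}^{-1}$ — to be algebraic with non-positive exponents, as desired. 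If $\alggrp{L}_{\alggrp Q}=\alggrp G$, Theorem~\ref{thm:criterion} reduces the reducibility to that of $(\Ind_B^G\tau)^{\cts}$ with $\tau$ smooth, the smooth case below then yields $\tau_i=\tau_{i+1}$ for some $i$, and since the finite-dimensionality of $L(\chi_0')$ makes each ratio $\chi_{0,i}\chi_{0,i+1}^{-1}$ non-positive algebraic, $\chi_i\chi_{i+1}^{-1}=\chi_{0,i}\chi_{0,i+1}^{-1}$ is non-positive algebraic. It remains to handle $\chi$ smooth. Here Theorem~\ref{thm:criterion} gives that $(\Ind_B^G\chi)^{\cts}$ is reducible iff $(\Ind_B^G\chi)^{\sm}$ has a non-dense irreducible subrepresentation; by genericity considerations (a non-dense irreducible subrepresentation of a smooth principal series must be non-generic) and the Bernstein--Zelevinsky classification of smooth principal series of $\GL_3(F)$, this forces — up to the outer automorphism $\iota\colon g\mapsto w_0\,{}^tg^{-1}w_0^{-1}$, which sends $(\chi_1,\chi_2,\chi_3)$ to $(\chi_3^{-1},\chi_2^{-1},\chi_1^{-1})$ and preserves topological irreducibility — either $\chi_i=\chi_{i+1}$ for some $i$ (and we are done) or $\chi_1\ne\chi_2$, $\chi_2\ne\chi_3$, $\chi_1\chi_3^{-1}=\lvert\cdot\rvert_F$. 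In the latter configuration $\chi_1\chi_3^{-1}\ne\lvert\cdot\rvert_F^2$ automatically, and applying $\iota$ replaces the hypothesis $\lvert\chi_2^{-1}\chi_3(\varpi_F)\rvert>\lvert q\rvert$ by $\lvert\chi_2^{-1}\chi_3(\varpi_F)\rvert<1$; since $\lvert q\rvert<1$, one of these two holds for every value of $\chi_2^{-1}\chi_3(\varpi_F)$, so it suffices to prove Proposition~\ref{prop:final case-intro}.

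\emph{Proving Proposition~\ref{prop:final case-intro}.} By \cite[Corollary~2.54]{unitary} it is enough to show that every irreducible subrepresentation $\pi\subset(\Ind_B^G\chi)^{\sm}$ is dense in $(\Ind_B^G\chi)^{\cts}$. I would take $\alggrp P$ of block type $(2,1)$ with Levi $\alggrp L$, set $\sigma:=(\Ind_{B\cap L}^L\chi)^{\cts}$, and identify $(\Ind_B^G\chi)^{\cts}\cong(\Ind_P^G\sigma)^{\cts}$. The geometric lemma \cite[5.2~Theorem]{MR0579172} together with Casselman's identification of $\pi^{N_0,Z_L^+=\chi}$ with the Jacquet module \cite[Proposition~4.3.4]{MR2292633} produces a nonzero $L$-equivariant embedding $\theta\colon\pi^{N_0,Z_L^+=\chi}\hookrightarrow(\Ind_{B\cap L}^L\chi)^{\sm}$; since $\chi_1\ne\chi_2$, the source is either the full smooth principal series of $\GL_2(F)\times\GL_1(F)$ or a twist of Steinberg, and its image is therefore explicit. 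The key step, Theorem~\ref{thm:explicit}, is an explicit formula for a section of $\theta$: pushing the known image of $\pi^{N_0,Z_L^+=\chi}$ through it yields a concrete reservoir of functions lying inside $\pi$. Using these functions and their $G$-translates I would then construct sequences $h_n\in\pi$, $h_n'\in(\Ind_P^G\sigma)^{\cts}$ and a vector $v\in\sigma$ with $\supp(h_n')\subset\overline{N}_0P$, $h_n'(x)\in Cv$ for $x\in\overline{N}_0$, $\inf_n\lVert h_n'\rVert>0$ and $h_n-h_n'\to0$, and conclude $\pi=(\Ind_P^G\sigma)^{\cts}$ from Corollary~\ref{cor:irreducibility criterion}.

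\emph{The main obstacle.} The hard part is Theorem~\ref{thm:explicit} and the subsequent construction of $h_n$ and $h_n'$. Inverting $\theta$ explicitly amounts to understanding, at the level of honest functions on $G$ rather than up to isomorphism, how a function on $L$ in the image of $\theta$ lifts to a function inside the \emph{smooth} subrepresentation $\pi$ — i.e.\ controlling the Jacquet functor on the nose. Once that is available, one must average these functions against suitable kernels so as to simultaneously (a)~concentrate the support near $\overline{N}_0P$, (b)~make the values along $\overline{N}_0$ approximately proportional to a fixed $v\in\sigma$, and (c)~keep $\lVert h_n'\rVert$ bounded away from $0$; these three demands are in tension, and this is precisely where the hypotheses $\chi_1\chi_3^{-1}\ne\lvert\cdot\rvert_F^2$ and $\lvert\chi_2^{-1}\chi_3(\varpi_F)\rvert>\lvert q\rvert$ should be used — they control the convergence of the relevant geometric sums and prevent the mass of $h_n'$ from escaping. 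Making those two inequalities do exactly the right work is the crux of the whole argument.
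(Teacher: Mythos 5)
Your reduction scaffolding is correct and coincides with the paper's own route: the ``if'' direction via transitivity and the finite-dimensional subrepresentation coming from Theorem~\ref{thm:GL2}; the ``only if'' direction via the decomposition $\chi \cong \sigma_0\otimes\tau$ and Theorem~\ref{thm:criterion}, with the four possibilities for $\alggrp Q$ handled exactly as in the paper; the smooth case reduced (via \cite[Theorem~3.9]{unitary}, which is what your ``non-dense implies non-generic'' gloss amounts to) to $\chi_1\ne\chi_2$, $\chi_2\ne\chi_3$, $\chi_1\chi_3^{-1}=\lvert\cdot\rvert_F$; and the observation that applying $\iota$ turns the hypothesis $\lvert\chi_2^{-1}\chi_3(\varpi_F)\rvert>\lvert q\rvert$ of Proposition~\ref{prop:final case} into $\lvert\chi_2^{-1}\chi_3(\varpi_F)\rvert<1$, so the two together cover all cases (the paper phrases this as: the product of $\lvert\chi_1^{-1}\chi_2(\varpi_F)\rvert$ and $\lvert\chi_2^{-1}\chi_3(\varpi_F)\rvert$ equals $\lvert q\rvert$, so one factor exceeds $\lvert q\rvert$) --- this is Lemma~\ref{lem:implies-thm}.

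The genuine gap is that you do not prove Proposition~\ref{prop:final case}: you describe the intended mechanism and then declare Theorem~\ref{thm:explicit} and the construction of $h_n,h_n'$ to be ``the main obstacle'', but these constitute essentially all of the paper's actual content (\S\S3.2--3.4). Concretely, what is missing is: (1) the proof of Theorem~\ref{thm:explicit}, which is carried out cell by cell on the Iwahori decomposition by letting the Hecke operators $\tau_z$, $z=\diag(\varpi_F^k,\varpi_F^k,1)$, act and passing to the limit in $k$; the hypotheses $\eta_2\ne\lvert\cdot\rvert_F^{-1}$ and $\eta_1\eta_2\ne\lvert\cdot\rvert_F^{-2}$ enter there (denominators $q-\eta_2(\varpi_F)$ and $q^2-\eta_1\eta_2(\varpi_F)$), not merely as convergence conditions; and (2) the density step, which is not an ``averaging against suitable kernels'' but an explicit choice: $f_n$ is the element of $((\Ind_B^G\chi)^{\sm})^{N_0,Z_L^+=\chi}$ vanishing on the identity cell of $L$ and equal to $\gamma^n g(a/\varpi_F^n)$ on the $\dot s_1$-cell, with $\gamma:=q/\eta_2(\varpi_F)$ (so $\lvert\gamma\rvert<1$ exactly because $\lvert\chi_2^{-1}\chi_3(\varpi_F)\rvert>\lvert q\rvert$) and $g$ locally constant with $\int_{\mathcal O_F}g=0$ and $h(0,-1)\ne 0$ (Lemma~\ref{lem:how to take g}, which needs $\eta_2\ne 1$). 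The condition $\int_{\mathcal O_F}g=0$ is doing two jobs you do not address: it makes the values of $f_n$ on the $s_2s_1$-type cells decay so that $h_n-h_n'\to 0$ with $\inf_n\lVert h_n'\rVert>0$ (feeding Corollary~\ref{cor:irreducibility criterion}), and it forces $f_n$ to lie in the kernel of the map to the one-dimensional quotient of $(\Ind_{B\cap L}^L\chi)^{\sm}$, i.e.\ in the twist-of-Steinberg socle --- which is necessary because when that principal series is reducible you only know that $\pi^{N_0,Z_L^+=\chi}$ contains the socle, not the full ``reservoir'' your sketch assumes. As written, your proposal is an accurate outline of the paper's argument rather than a proof of the theorem.
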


We can now slightly improve \cite[Theorem~\ref{unitary:thm:GLn-irred}]{unitary}, adding the more restrictive condition (i).
We assume for this result that $C$ is sufficiently large so that \cite{unitary} applies.

\begin{cor}\label{cor:GLn}
  Let $G = \GL_{n}(F)$, $B$ the upper-triangular Borel subgroup, and $T$ the diagonal maximal torus with Lie algebra $\mathfrak{t}$.
  Let $\chi = \chi_{1}\otimes\cdots\otimes\chi_{n}\colon T = (F^\times)^n \to C^{\times}$ be a continuous (hence locally $\Q_{p}$-analytic) character.
  We have $d\chi\in \Hom_{\Q_{p}}(\mathfrak{t},C)\simeq \bigoplus_{\kappa\in\Hom(F,C)}\Hom_{C}(\mathfrak{t} \otimes_{F,\kappa} C,C)$ and let $\lambda_{\kappa} = (\lambda_{\kappa,1},\ldots,\lambda_{\kappa,n})$ be the $\kappa$-component of $d\chi$, where $\lambda_{\kappa,k}\in \Hom_{C}(C,C)\simeq C$.
  Choose $0 = n_0 < n_1 < \cdots < n_r = n$ such that $\lambda_{\kappa,i} - \lambda_{\kappa,i + 1}\in \Z_{\le 0}$ for all $\kappa\colon F\to C$ is equivalent to $i \not\in \{n_1,\dots,n_r\}$.
  Assume that there exists no $n_k < i < j \le n_{k+1}$ (for some $0 \le k < r$) such that
  \begin{enumerate}
  \item if $n_{k+1}-n_k = 3$, then $j-i = 1$, and
  \item $\chi_{i}\chi_{j}^{-1}(t) = \lvert t\rvert_F^{j - i - 1}\prod_{\kappa\colon F\to C}\kappa(t)^{\lambda_{\kappa,i} - \lambda_{\kappa,j}}$ for all $t\in F^{\times}$.
  \end{enumerate}
  Then $(\Ind_{B}^{G}\chi)^{\cts}$ is absolutely irreducible.
\end{cor}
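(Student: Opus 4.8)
The plan is to deduce this from Theorem~\ref{thm:main} together with the reduction machinery of Theorem~\ref{thm:criterion}, exactly following the pattern by which \cite[Theorem~3.9]{unitary} was originally established, but now feeding in the sharper $\GL_3$ input. First I would invoke the transitivity/reduction steps: the decomposition $\sigma\cong\sigma_0\otimes\tau$ and the equivalence of Theorem~\ref{thm:criterion} reduce the irreducibility of $(\Ind_B^G\chi)^\cts$ to a statement about $(\Ind_{B\cap L_Q}^{L_Q}\tau)^\cts$, where $L_Q$ is (a product of $\GL$'s corresponding to) the block decomposition determined by the integers $n_0<n_1<\dots<n_r$, and $\tau$ is the smooth twist. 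Concretely, irreducibility of $(\Ind_B^G\chi)^\cts$ is equivalent to the simultaneous irreducibility of the Banach principal series of each block $\GL_{n_{k+1}-n_k}(F)$ attached to the corresponding consecutive characters, after removing the ``algebraic part''. So it suffices to treat a single block, i.e.\ reduce to the case $r=1$, $n_1=n$, where by construction $\lambda_{\kappa,i}-\lambda_{\kappa,i+1}\in\Z_{\le 0}$ for all $\kappa$ and all $1\le i<n$, and $\tau$ is the smooth character $\tau_i:=\chi_i\cdot\prod_\kappa\kappa(\cdot)^{-\lambda_{\kappa,i}}$ (suitably normalized so that $\sigma_0$ is the algebraic piece).

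Next, within a single block of size $m:=n_{k+1}-n_k$, I would split by $m$. If $m=1$, there is nothing to prove. If $m=2$, apply Theorem~\ref{thm:GL2} to $\tau$: reducibility happens iff $\tau_1\tau_2^{-1}$ is non-positive algebraic; but the assumption $\lambda_{\kappa,1}-\lambda_{\kappa,2}\in\Z_{\le 0}$ forces the genuinely algebraic factor to have already been stripped into $\sigma_0$, so $\tau_1\tau_2^{-1}$ is non-positive algebraic precisely when $\chi_1\chi_2^{-1}(t)=\prod_\kappa\kappa(t)^{\lambda_{\kappa,1}-\lambda_{\kappa,2}}$, i.e.\ exactly condition (ii) with $j-i=1$; ruling this out by hypothesis gives irreducibility. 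For $m=3$, apply Theorem~\ref{thm:main} to $\tau$: reducibility happens iff $\tau_1\tau_2^{-1}$ or $\tau_2\tau_3^{-1}$ is non-positive algebraic, which — arguing as before — translates to condition (ii) holding with $(i,j)$ one of the two adjacent pairs, i.e.\ $j-i=1$; this is precisely excluded by hypotheses (i) and (ii). Finally, for $m\ge 4$ I would appeal to \cite[Theorem~3.9]{unitary} itself (the non-positive-algebraic-free case), which already guarantees irreducibility of each such block since within a block of size $\ge 4$ the old theorem's hypotheses are met once $\sigma_0$ has absorbed all the algebraic data — here no extra exceptional condition is needed, consistent with the Corollary only imposing condition (i) when $n_{k+1}-n_k=3$.

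The last point is to upgrade ``irreducible'' to ``absolutely irreducible''. Since all of the cited results (Theorem~\ref{thm:criterion}, Theorem~\ref{thm:GL2}, Theorem~\ref{thm:main}) are stable under replacing $C$ by a finite extension — the conditions ``$\chi_i\chi_j^{-1}$ is non-positive algebraic'' and ``$\chi_i\chi_j^{-1}(t)=\lvert t\rvert_F^{j-i-1}\prod_\kappa\kappa(t)^{\lambda_{\kappa,i}-\lambda_{\kappa,j}}$'' only involve the values of the $\chi_i$ and are unaffected by enlarging $C$, and the set $\Hom(F,C)$ stabilizes once $C$ is large enough — the irreducibility statement holds over every finite extension of $C$, which is exactly absolute irreducibility. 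I expect the main obstacle to be purely bookkeeping rather than conceptual: one must carefully match the block data $(n_0,\dots,n_r)$ and the twist $\tau$ produced by the abstract decomposition $\sigma\cong\sigma_0\otimes\tau$ of Section~\ref{subsec:Criterion} with the concrete description in terms of the $\lambda_{\kappa,i}$, and verify that ``condition (ii) fails for the adjacent pair in a size-$3$ block'' is literally the negation of ``$\tau_2\tau_3^{-1}$ (resp.\ $\tau_1\tau_2^{-1}$) is non-positive algebraic'' after the normalization — checking that no spurious shift by a power of $\lvert\cdot\rvert_F$ is lost, given that the $j-i-1$ in condition (ii) equals $1$ precisely in the relevant $m=3$, $j-i=2$ case that (i) forbids and $0$ in the $j-i=1$ cases.
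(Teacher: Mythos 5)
Your overall route is the same as the paper's: pass to the decomposition $\sigma\cong\sigma_{0}\otimes\tau$ with its parabolic $\alggrp Q$ (whose blocks are $\{n_k+1,\dots,n_{k+1}\}$), apply Theorem~\ref{thm:criterion} to reduce to $(\Ind_{B\cap L_Q}^{L_Q}\tau)^{\cts}$, reduce further to a single block, settle each block by Theorem~\ref{thm:GL2}/Theorem~\ref{thm:main}/\cite[Theorem~3.9]{unitary} according to its size, and get absolute irreducibility from stability of all hypotheses under finite extension of $C$. Two points, however, need correction or sharpening. First, your treatment of blocks of size $m\ge 4$ inverts the logic of the hypothesis: condition (i) is an implication that is \emph{vacuously true} whenever $n_{k+1}-n_k\ne 3$, so for such blocks the Corollary's assumption excludes condition (ii) for \emph{every} pair $n_k<i<j\le n_{k+1}$ --- it is a stronger assumption there, not an empty one. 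The hypotheses of \cite[Theorem~3.9]{unitary} are \emph{not} ``met once $\sigma_{0}$ has absorbed the algebraic data'': after stripping, $\tau$ is smooth but one can perfectly well have $\tau_i\tau_j^{-1}=\lvert\cdot\rvert_F^{\,j-i-1}$ for some non-adjacent pair inside a block of size $\ge 4$, and that case is not covered by \cite[Theorem~3.9]{unitary} (nor by anything in this paper; whether Theorem~\ref{thm:main-intro} extends to $n\ge 4$ is open). What saves the step is precisely the Corollary's hypothesis for those blocks, i.e.\ that condition (ii) is equivalent (within a block, where $\tau_i\tau_j^{-1}$ is smooth) to $\tau_i\tau_j^{-1}=\lvert\cdot\rvert_F^{\,j-i-1}$, and that all such equalities are excluded when the block size is not $3$; the improvement ($j-i=1$ only) is used solely for size-$3$ blocks, where Theorem~\ref{thm:main} is invoked. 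As written, your justification for $m\ge 4$ is false even though the conclusion you need is available from the stated hypothesis.

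Second, the reduction to a single block is not formal in the Banach setting: $L_Q$ is a product of groups $\GL_{n_{k+1}-n_k}(F)$ and $(\Ind_{B\cap L_Q}^{L_Q}\tau)^{\cts}$ is a completed tensor product of the block principal series, and deducing its (absolute) irreducibility from that of the factors requires an argument; the paper cites \cite[Proposition~2.59]{unitary} for exactly this. With that reference supplied and the $m\ge4$ case argued from the actual hypothesis (and noting the paper simply uses \cite[Theorem~3.9]{unitary} for all $m\ne 3$, your use of Theorem~\ref{thm:GL2} for $m=2$ being an equivalent variant), your proposal matches the paper's proof.
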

\begin{proof}
  Let $\alggrp{G} = \Res_{F/\Q_{p}}\GL_{n}$.
  For $\sigma := \chi$, take $\sigma_{0},\tau,\alggrp{Q}$ as in subsection \ref{subsec:Criterion}.
  Then condition (ii) is equivalent to $\tau_i\tau_j^{-1} \ne \lvert \cdot\rvert_F^{j - i - 1}$ (cf.\ the proof of \cite[Theorem~\ref{unitary:thm:GLn-irred}]{unitary}),
  and $\alggrp Q$ is the standard parabolic corresponding to the partition $\{1,\dots,n_1\}$, $\{n_1+1,\dots,n_2\}$, \dots\ of $\{1,2,\dots,n\}$.
    By Theorem~\ref{thm:criterion} it suffices to show that $(\Ind_{B\cap L_Q}^{L_Q}\tau)^{\cts}$ is absolutely irreducible.
  By \cite[Proposition~\ref{unitary:prop:irreducibility for product group}]{unitary} we may reduce to the case where $r = 1$, i.e.\ $\alggrp{Q} = \alggrp{G}$.
  Then $(\Ind_{B}^{G}\tau)^{\cts}$ is irreducible by Theorem~\ref{thm:main} if $n = 3$ and by \cite[Theorem~\ref{unitary:thm:GLn-irred}]{unitary} if $n \ne 3$.
\end{proof}

We now prepare for the proof of Theorem~\ref{thm:main}.
We first prove the ``if'' part.
Without loss of generality, using the same automorphism as in the proof of Lemma~\ref{lem:implies-thm}, suppose that $\chi_{1}\chi_{2}^{-1}(t) = \prod_{\kappa\in\Hom(F,\overline C)}\kappa(t)^{k_{\kappa}}$.
Then by Theorem~\ref{thm:GL2} there exists a subrepresentation $0 \subsetneq \pi \subsetneq (\Ind_{B \cap L}^L \chi)^\cts$, where $\alggrp{P} = \alggrp{L}\alggrp{N}$ is the standard parabolic subgroup corresponding to $3 = 2 + 1$.
Hence by transitivity of parabolic induction we get the closed subrepresentation $0 \subsetneq (\Ind_P^G \pi)^\cts \subsetneq (\Ind_{B}^G \chi)^\cts$.

For the ``only if'' part, if $(\Ind_{B}^G \chi)^\cts$ is reducible, then it is reducible over any finite extension of $C$, so we may assume that $C$ is as large as we need to apply the results of \cite{unitary}.
In particular, $\Hom(F,C) = \Hom(F,\overline C)$.
Let $\sigma := \chi$ and take $\sigma_{0},\tau,\alggrp{Q}$ as in subsection \ref{subsec:Criterion}.
More concretely, $\alggrp{Q}$ is given as follows:
\begin{itemize}
\item $\alggrp{Q} = \alggrp{B}$ if $d\chi_{i}-d\chi_{i + 1} \ne \sum_{\kappa\in \Hom(F,C)}k_{\kappa}\kappa$ for all $(k_{\kappa})\in \Z_{\le 0}^{\Hom(F,C)}$ and $i = 1,2$.
\item $\alggrp{Q}$ is the standard parabolic subgroup corresponding to $3 = 2 + 1$ if there exists $(k_{\kappa})\in \Z_{\le 0}^{\Hom(F,C)}$ such that $d\chi_{i}-d\chi_{i + 1} = \sum_{\kappa\in \Hom(F,C)}k_{\kappa}\kappa$ for $i = 1$ but not for $i = 2$.
\item $\alggrp{Q}$ is the standard parabolic subgroup corresponding to $3 = 1 + 2$ if there exists $(k_{\kappa})\in \Z_{\le 0}^{\Hom(F,C)}$ such that $d\chi_{i}-d\chi_{i + 1} = \sum_{\kappa\in \Hom(F,C)}k_{\kappa}\kappa$ for $i = 2$ but not for $i = 1$.
\item $\alggrp{Q} = \alggrp{G}$ if there exist $(k_{\kappa,i})\in \Z_{\le 0}^{\Hom(F,C)}$ such that $d\chi_{i}-d\chi_{i + 1} = \sum_{\kappa\in \Hom(F,C)}k_{\kappa,i}\kappa$ for $i = 1,2$.
\end{itemize}
By Theorem~\ref{thm:criterion}, $(\Ind_{B}^{G}\chi)^{\cts}$ is irreducible if and only if $(\Ind_{B \cap L_{Q}}^{L_{Q}}\tau)^{\cts}$ is irreducible.
Hence, if $\alggrp{Q} = \alggrp{B}$, $(\Ind_{B}^{G}\chi)^{\cts}$ is irreducible and the theorem follows.
If $\alggrp{Q} \ne \alggrp{B}$ and $\alggrp{Q} \ne \alggrp{G}$, then the theorem follows from Theorem~\ref{thm:GL2}.
Therefore we may assume $\alggrp{Q} = \alggrp{G}$ and $\chi = \tau$.
Namely we may assume $\chi$ is smooth.
So our task is to prove the following.
\begin{prop}\label{prop:main thm, smooth}
Let $\chi$ be a smooth character of $T$.
The Banach representation $(\Ind_{B}^{G}\chi)^{\cts}$ is reducible if and only if $\chi_{1} = \chi_{2}$ or $\chi_{2} = \chi_{3}$.
\end{prop}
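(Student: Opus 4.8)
The plan is to apply the reduction machinery already set up in the excerpt and to treat the genuinely new case by the explicit-function method outlined in the introduction. First, the ``if'' direction: when $\chi_1 = \chi_2$ (resp.\ $\chi_2 = \chi_3$), the character $\chi_1\chi_2^{-1}$ (resp.\ $\chi_2\chi_3^{-1}$) is trivial, hence non-positive algebraic with all exponents zero, so this is a special case of the ``only if'' reduction already carried out via Theorem~\ref{thm:GL2} and transitivity of parabolic induction; I would simply cite that argument. For the ``only if'' direction (the substantive content), assume $\chi_1 \ne \chi_2$ and $\chi_2 \ne \chi_3$; we must show $(\Ind_B^G\chi)^\cts$ is irreducible. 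By Theorem~\ref{thm:criterion} (applied with $\sigma = \tau = \chi$ smooth, $\alggrp Q = \alggrp G$) combined with \cite[Corollary~2.54]{unitary}, it suffices to prove that every irreducible subrepresentation $\pi$ of the smooth principal series $(\Ind_B^G\chi)^\sm$ is dense in $(\Ind_B^G\chi)^\cts$.

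Next I would analyze the smooth principal series $(\Ind_B^G\chi)^\sm$ to locate its non-generic irreducible subrepresentations. Using the standard theory of the Jacquet module and the geometric lemma~\cite[5.2~Theorem]{MR0579172} with respect to the parabolic $\alggrp P$ corresponding to $3 = 2+1$, the relevant constituent of the Jacquet module $\pi_N$ with $Z_L$ acting by $\chi$ embeds into $(\Ind_{B\cap L}^L\chi)^\sm$, and via the isomorphism $\pi^{N_0, Z_L^+ = \chi} \congto \pi_N^{Z_L=\chi}$ of \cite[Proposition~4.3.4]{MR2292633} this gives a nonzero $L$-equivariant injection $\pi^{N_0, Z_L^+=\chi} \into (\Ind_{B\cap L}^L\chi)^\sm$. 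A genericity analysis (Zelevinsky / Bernstein--Zelevinsky segment combinatorics for $\GL_3$) shows that if $\chi_i\chi_{i+1}^{-1}\ne|\cdot|_F$ for $i=1,2$ then $(\Ind_B^G\chi)^\sm$ is already irreducible (standard), so after applying the outer automorphism $g\mapsto {}^tg^{-1}$ of $\GL_3$ (which swaps $\chi_1\leftrightarrow\chi_3^{-1}$ etc.) we may assume $\chi_1\chi_3^{-1} = |\cdot|_F$; this is precisely the setup of Proposition~\ref{prop:final case-intro}, whose hypotheses $\chi_1\chi_3^{-1}\ne|\cdot|_F^2$ and $|\chi_2^{-1}\chi_3(\varpi_F)| > |q|$ I would need to verify hold automatically in the remaining configuration (the first is automatic since $|\cdot|_F \ne |\cdot|_F^2$, the second needs a short argument using $\chi_2\ne\chi_3$ and the freedom to twist / apply the automorphism). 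So everything reduces to Proposition~\ref{prop:final case-intro}.

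Finally, for Proposition~\ref{prop:final case-intro} I would follow the strategy sketched after its statement. Since $\chi_1\ne\chi_2$, the image of $\pi^{N_0,Z_L^+=\chi}$ under $\theta$ in $(\Ind_{B\cap L}^L\chi)^\sm$ is a nonzero $L$-subrepresentation of a length-$\le 2$ smooth $\GL_2(F)$-principal series, hence is either the whole space or a twist of Steinberg; either way it contains many explicit functions. Using the explicit inverse $\theta^{-1}$ (Theorem~\ref{thm:explicit}), I transport these back to explicit elements $h_n\in\pi\subset(\Ind_B^G\chi)^\sm = (\Ind_P^G\sigma)^\sm$ with $\sigma := (\Ind_{B\cap L}^L\chi)^\cts$. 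The crux is then a careful asymptotic/support analysis: one constructs, for a suitable fixed $v\in\sigma$ and a compact open $\overline N_0\subset\overline N$, companion functions $h_n'\in(\Ind_P^G\sigma)^\cts$ supported on $\overline N_0 P$ with $h_n'|_{\overline N_0}$ valued in $Cv$, uniformly bounded below in norm, and with $h_n - h_n' \to 0$; then Corollary~\ref{cor:irreducibility criterion} yields $\pi = (\Ind_P^G\sigma)^\cts = (\Ind_B^G\chi)^\cts$. The main obstacle I anticipate is exactly this last step: producing the explicit functions $\theta^{-1}$ applied to Steinberg-type vectors on $G$ and controlling their support and $p$-adic size on the big cell $\overline N P$ well enough to force the approximation $h_n - h_n'\to 0$ — this is where the numerical hypotheses $\chi_1\chi_3^{-1}\ne|\cdot|_F^2$ and $|\chi_2^{-1}\chi_3(\varpi_F)|>|q|$ must enter, governing the convergence of the relevant geometric-type sums over $N_0/\ell N_0\ell^{-1}$, and it is the part requiring genuinely new computation rather than citation.
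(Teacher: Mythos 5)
There is a genuine gap in your reduction step. You claim that if $\chi_i\chi_{i+1}^{-1}\ne\lvert\cdot\rvert_F$ for $i=1,2$ then $(\Ind_B^G\chi)^{\sm}$ is irreducible, and hence that the only case left is $\chi_1\chi_3^{-1}=\lvert\cdot\rvert_F$. This is false: irreducibility of a $\GL_3$ principal series also requires the non-adjacent pair to be unlinked, and in the paper's unnormalized convention (which differs from the normalized one by $\delta_B^{-1/2}=\lvert\cdot\rvert_F^{-1}\boxtimes 1\boxtimes\lvert\cdot\rvert_F$) the smooth reducibility locus is $\chi_1\chi_2^{-1}\in\{1,\lvert\cdot\rvert_F^2\}$, $\chi_2\chi_3^{-1}\in\{1,\lvert\cdot\rvert_F^2\}$, or $\chi_1\chi_3^{-1}\in\{\lvert\cdot\rvert_F,\lvert\cdot\rvert_F^3\}$. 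So when $\chi_1\ne\chi_2$, $\chi_2\ne\chi_3$ there remain cases (e.g.\ $\chi_1\chi_2^{-1}=\lvert\cdot\rvert_F^2$, or $\chi_1\chi_3^{-1}=\lvert\cdot\rvert_F^3$) where the smooth principal series is reducible but $\chi_1\chi_3^{-1}\ne\lvert\cdot\rvert_F$; your ``smooth irreducible $\Rightarrow$ dense'' shortcut does not apply there, and these cases cannot be funneled into Proposition~\ref{prop:final case} either, since its hypothesis $\lvert\chi_2^{-1}\chi_3(\varpi_F)\rvert>\lvert q\rvert$ can fail for both $\chi$ and $\chi\circ\iota$ (try $\chi=\lvert\cdot\rvert_F^3\otimes\lvert\cdot\rvert_F\otimes\mathbf{1}$). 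The paper disposes of the entire region $\chi_1\chi_3^{-1}\ne\lvert\cdot\rvert_F$ at the Banach level by citing \cite[Theorem~3.9]{unitary} (Lemma~\ref{lem:many case}), irrespective of smooth reducibility; your proposal never invokes that input, so these cases are simply not covered.

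Two smaller points. First, in the remaining case $\chi_1\chi_3^{-1}=\lvert\cdot\rvert_F$ the hypothesis $\lvert\chi_2^{-1}\chi_3(\varpi_F)\rvert>\lvert q\rvert$ is not obtained ``from $\chi_2\ne\chi_3$''; the actual argument (Lemma~\ref{lem:implies-thm}) is that $\lvert\chi_1^{-1}\chi_2(\varpi_F)\rvert\cdot\lvert\chi_2^{-1}\chi_3(\varpi_F)\rvert=\lvert q\rvert$, so if both factors were $\le\lvert q\rvert$ the product would be $\le\lvert q\rvert^2<\lvert q\rvert$; hence one factor exceeds $\lvert q\rvert$, and the outer automorphism $\iota$ (which also preserves $\chi_1\chi_3^{-1}\ne\lvert\cdot\rvert_F^2$) lets you assume it is the second. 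Second, your treatment of Proposition~\ref{prop:final case} itself is only the paper's own sketch (Jacquet module, the explicit inverse of $\theta$, and Corollary~\ref{cor:irreducibility criterion}); taking it as an input is consistent with how the paper proves Proposition~\ref{prop:main thm, smooth}, but it contributes no independent verification of that step. With the first paragraph's gap repaired (i.e.\ replacing the false smooth-irreducibility claim by an appeal to \cite[Theorem~3.9]{unitary} when $\chi_1\chi_3^{-1}\ne\lvert\cdot\rvert_F$), your outline would coincide with the paper's argument.
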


\begin{lem}\label{lem:many case}
Proposition~\ref{prop:main thm, smooth} is true if $\chi_{1}\chi_{3}^{-1} \ne \lvert \cdot\rvert_{F}$.
\end{lem}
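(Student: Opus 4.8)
The plan is to use Theorem~\ref{thm:criterion} to reduce the assertion to smooth representation theory, and then to combine the Bernstein--Zelevinsky classification with the density criterion of Corollary~\ref{cor:irreducibility criterion}. Applying Theorem~\ref{thm:criterion} with $\sigma:=\chi$: since $\chi$ is smooth, $\sigma_{0}$ is trivial, so $L(\sigma_{0}')$ is the trivial representation and is locally $\Lie(G)\otimes_{\Q_{p}}C$-finite; hence $\alggrp Q=\alggrp G$ and $\tau=\chi$. Thus $(\Ind_{B}^{G}\chi)^{\cts}$ is irreducible if and only if every irreducible subrepresentation of $(\Ind_{B}^{G}\chi)^{\sm}$ is dense in $(\Ind_{B}^{G}\chi)^{\cts}$. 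Since the ``if'' part of Proposition~\ref{prop:main thm, smooth} is already established, it remains to show: if $\chi_{1}\ne\chi_{2}$, $\chi_{2}\ne\chi_{3}$ and $\chi_{1}\chi_{3}^{-1}\ne\lvert\cdot\rvert_{F}$, then every irreducible $\pi\subset(\Ind_{B}^{G}\chi)^{\sm}$ is dense in $(\Ind_{B}^{G}\chi)^{\cts}$.

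Writing $(\Ind_{B}^{G}\chi)^{\sm}$ as the normalized induction of $\mu:=\chi\delta_{B}^{-1/2}=\chi_{1}\lvert\cdot\rvert_{F}^{-1}\otimes\chi_{2}\otimes\chi_{3}\lvert\cdot\rvert_{F}$, I would first dispose of the case where $(\Ind_{B}^{G}\chi)^{\sm}$ is irreducible: then $\pi=(\Ind_{B}^{G}\chi)^{\sm}$, which is dense in $(\Ind_{B}^{G}\chi)^{\cts}$ because locally constant functions are dense among continuous ones on the compact group $K$ (using $KB=G$). If $(\Ind_{B}^{G}\chi)^{\sm}$ is reducible, the Bernstein--Zelevinsky classification together with our hypotheses leaves exactly: (a) $\chi_{i}\chi_{i+1}^{-1}=\lvert\cdot\rvert_{F}^{2}$ for exactly one $i\in\{1,2\}$ (this includes the two $W$-irregular configurations $\mu_{1}=\mu_{2}$ and $\mu_{2}=\mu_{3}$, which also force $\chi_{1}\chi_{3}^{-1}=\lvert\cdot\rvert_{F}^{3}$); (b) $\chi_{1}\chi_{3}^{-1}=\lvert\cdot\rvert_{F}^{3}$ with $\chi_{1}\chi_{2}^{-1},\chi_{2}\chi_{3}^{-1}\ne\lvert\cdot\rvert_{F}^{2}$; or (c) $\chi_{1}\chi_{2}^{-1}=\chi_{2}\chi_{3}^{-1}=\lvert\cdot\rvert_{F}^{2}$ (the length-$4$ ``diamond''). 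In each case one checks, using exactness of parabolic induction, the structure of reducible $\GL_{2}(F)$-principal series, and the fact that a Steinberg representation of $\GL_{2}(F)$ induces irreducibly and generically against a character nested inside its underlying segment, that $(\Ind_{B}^{G}\chi)^{\sm}$ has a unique irreducible subrepresentation, equal to its unique generic constituent; so $\pi$ is this generic socle, and it remains to show it is dense.

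For case (a), let $\alggrp P=\alggrp L\alggrp N$ be the standard maximal parabolic whose Levi block carries the pair with $\chi_{i}\chi_{i+1}^{-1}=\lvert\cdot\rvert_{F}^{2}$, write $(\Ind_{B}^{G}\chi)^{\cts}=(\Ind_{P}^{G}\sigma)^{\cts}$ with $\sigma:=(\Ind_{B\cap L}^{L}\chi)^{\cts}$, and note that $\sigma$ is irreducible by Theorem~\ref{thm:GL2} (as $\lvert\cdot\rvert_{F}^{2}$ is not non-positive algebraic). Hence every nonzero subrepresentation of $\sigma$ is dense; in particular the smooth socle $\rho_{0}$ of $\sigma^{\sm}$ is dense in $\sigma$. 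Our hypotheses ensure $(\Ind_{P}^{G}\rho_{0})^{\sm}$ is still irreducible and equals $\pi$, and since functions valued in the dense subspace $\rho_{0}\subset\sigma$
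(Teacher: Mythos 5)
Your opening reduction (via Theorem~\ref{thm:criterion}, with $\alggrp Q=\alggrp G$ and $\tau=\chi$ since $\chi$ is smooth) is correct and matches the paper's framework, but after that your route diverges from the paper and does not get to the end. The paper's own proof is a one-line citation: under $\chi_{1}\ne\chi_{2}$, $\chi_{2}\ne\chi_{3}$, $\chi_{1}\chi_{3}^{-1}\ne\lvert\cdot\rvert_{F}$, irreducibility of $(\Ind_{B}^{G}\chi)^{\cts}$ is exactly the case covered by \cite[Theorem~3.9]{unitary} (the result refined in Corollary~\ref{cor:GLn}), so nothing beyond quoting it is required. You instead attempt to reprove this from scratch, and the attempt has genuine gaps. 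First, it is unfinished: the text breaks off in the middle of case (a), and cases (b) ($\chi_{1}\chi_{3}^{-1}=\lvert\cdot\rvert_{F}^{3}$ with both adjacent ratios $\ne\lvert\cdot\rvert_{F}^{2}$) and (c) ($\chi_{1}\chi_{2}^{-1}=\chi_{2}\chi_{3}^{-1}=\lvert\cdot\rvert_{F}^{2}$) are never treated. Second, the structural claim on which everything rests --- that in each reducible configuration $(\Ind_{B}^{G}\chi)^{\sm}$ has a unique irreducible subrepresentation, equal to its unique generic constituent, and that in case (a) this socle coincides with $(\Ind_{P}^{G}\rho_{0})^{\sm}$ --- is asserted, not proved; it needs indecomposability statements and a separate discussion of the irregular configurations you mention, and if it failed one would also have to prove density of a non-generic subrepresentation, which your method does not address.

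More seriously, the density mechanism you sketch (realize $\pi$ as $(\Ind_{P}^{G}\rho_{0})^{\sm}$ with $\rho_{0}$ the smooth socle of the Levi Banach principal series $\sigma$, which is irreducible by Theorem~\ref{thm:GL2}, and approximate continuous $\sigma$-valued functions by locally constant $\rho_{0}$-valued ones) can only work in case (a). In case (b) the linked pair is the non-adjacent $(\chi_{1},\chi_{3})$, which no standard Levi of $\GL_{3}$ sees: both $\GL_{2}$-blocks give irreducible smooth principal series on the Levi, so $\rho_{0}=\sigma^{\sm}$ and your argument only reproduces the (known) density of smooth vectors, while $\pi$ is a proper subrepresentation of $(\Ind_{B}^{G}\chi)^{\sm}$; the smooth intertwining $\mu_{1}\times\mu_{2}\times\mu_{3}\simeq\mu_{2}\times\mu_{1}\times\mu_{3}$ you would want to exploit is not available at the Banach level. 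In case (c), $(\Ind_{P}^{G}\rho_{0})^{\sm}$ is itself reducible (of length $2$) and strictly contains the Steinberg-type socle $\pi$, so density of $(\Ind_{P}^{G}\rho_{0})^{\sm}$ does not yield density of $\pi$. Closing these cases would require an argument of the kind the paper develops in its later sections for the excluded point $\chi_{1}\chi_{3}^{-1}=\lvert\cdot\rvert_{F}$ (explicit Hecke eigenvectors in $\pi^{N_{0},Z_{L}^{+}=\chi}$ together with Corollary~\ref{cor:irreducibility criterion}), i.e.\ essentially redoing the work of \cite[Theorem~3.9]{unitary} --- which is precisely what the paper avoids by citing that theorem.
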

\begin{proof}
It is sufficient to prove that if $\chi_{1}\ne \chi_{2}$, $\chi_{2}\ne \chi_{3}$ and $\chi_{1}\chi_{3}^{-1} \ne \lvert \cdot\rvert_{F}$, then $(\Ind_{B}^{G}\sigma)^{\cts}$ is irreducible.
This follows from \cite[Theorem~\ref{unitary:thm:GLn-irred}]{unitary}.
\end{proof}

In the rest of this paper we prove the following.

\begin{prop}\label{prop:final case}
Assume that $\chi$ is smooth and that $\chi_{1}\ne \chi_{2}$, $\chi_{2}\ne \chi_{3}$.
If $\chi_{1}\chi_{3}^{-1} \ne \lvert \cdot\rvert_F^2$
and $\lvert \chi_{2}^{-1}\chi_{3}(\varpi_{F})\rvert > \lvert q\rvert$, then $(\Ind_{B}^{G}\chi)^{\cts}$ is irreducible.
\end{prop}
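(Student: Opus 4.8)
The plan is to follow the route sketched in the introduction. First, by \cite[Corollary~2.54]{unitary} (applied with $\alggrp{Q}=\alggrp{G}$, $\tau=\chi$ and $\sigma_{0}$ trivial) it suffices to prove that every irreducible subrepresentation $\pi$ of the smooth principal series $(\Ind_{B}^{G}\chi)^{\sm}$ is dense in $(\Ind_{B}^{G}\chi)^{\cts}$; such a $\pi$ exists since $(\Ind_{B}^{G}\chi)^{\sm}$ is nonzero, admissible and of finite length. (The only case not already handled by Lemma~\ref{lem:many case} is $\chi_{1}\chi_{3}^{-1}=\lvert\cdot\rvert_{F}$, where $(\Ind_{B}^{G}\chi)^{\sm}$ is indecomposable of length two, but there is no need to single this out.) Fix such a $\pi$.

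Next, let $\alggrp{P}=\alggrp{L}\alggrp{N}$ be the standard parabolic with $\alggrp{L}=\Res_{F/\Q_{p}}(\GL_{2}\times\GL_{1})$, let $\overline{\alggrp{P}}=\alggrp{L}\,\overline{\alggrp{N}}$ be the opposite parabolic, take $K=\GL_{3}(\mathcal{O}_{F})$ and put $N_{0}=N\cap K$, $\overline{N}_{0}=\overline{N}\cap K$, and $L^{+}$, $Z_{L}^{+}$ as in the introduction. Combining the identification $\pi^{N_{0},Z_{L}^{+}=\chi}\cong\pi_{N}^{Z_{L}=\chi}$ of \cite[Proposition~4.3.4]{MR2292633} with the geometric lemma \cite[5.2~Theorem]{MR0579172} applied to $(\Ind_{B}^{G}\chi)^{\sm}$, I would first verify that among the three cells in the Bernstein--Zelevinsky filtration of the Jacquet module $(\Ind_{B}^{G}\chi)_{N}$ only the open one has $Z_{L}$ acting through $\chi$ — this is exactly where the hypotheses $\chi_{1}\ne\chi_{2}$, $\chi_{2}\ne\chi_{3}$, $\chi_{1}\chi_{3}^{-1}\ne\lvert\cdot\rvert_{F}^{2}$ and $\lvert\chi_{2}^{-1}\chi_{3}(\varpi_{F})\rvert\ne\lvert q\rvert$ enter. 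This yields $L$-equivariant maps
\begin{equation*}
0\ne\pi^{N_{0},Z_{L}^{+}=\chi}\into\bigl((\Ind_{B}^{G}\chi)^{\sm}\bigr)^{N_{0},Z_{L}^{+}=\chi}\xrightarrow[\theta]{\;\sim\;}(\Ind_{B\cap L}^{L}\chi)^{\sm},
\end{equation*}
with $\theta$ given by restriction of functions from $G$ to $L$. Since $\chi_{1}\ne\chi_{2}$, the target has length at most two, so via $\theta$ the subrepresentation $\pi^{N_{0},Z_{L}^{+}=\chi}$ is either all of $(\Ind_{B\cap L}^{L}\chi)^{\sm}$ or (up to a character twist on the $\GL_{1}(F)$-factor) a twist of the Steinberg representation of the $\GL_{2}(F)$-factor; in either case it is an explicitly understood irreducible $L$-representation.

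The crucial, and I expect hardest, step is to invert $\theta$ by a closed formula (this is Theorem~\ref{thm:explicit}). Given a locally constant $f$ in $(\Ind_{B\cap L}^{L}\chi)^{\sm}$ with suitably small support, I would extend it to $\widetilde f\in\bigl((\Ind_{B}^{G}\chi)^{\sm}\bigr)^{N_{0},Z_{L}^{+}=\chi}$ with $\widetilde f|_{L}=f$ by spreading $f$ over the big cell $\overline{N}L$ and then averaging over $N_{0}$ modulo a deep congruence subgroup to enforce $N_{0}$-invariance; the geometric lemma guarantees this lands in the smooth induction and represents $\theta^{-1}(f)$, and $\chi_{1}\chi_{3}^{-1}\ne\lvert\cdot\rvert_{F}^{2}$ is precisely what keeps the normalizing factor in the resulting formula nonzero (equivalently, what makes $\theta$ surjective). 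Feeding the explicit $\GL_{2}(F)$-vectors — principal series, resp.\ Steinberg, vectors — through this formula and then translating by elements of $\overline{N}$ and by strongly contracting elements of $Z_{L}^{+}$, I obtain a supply of elements $h\in\pi$ whose restrictions to $K$ I can control: after renormalizing, $h$ is supported essentially on $\overline{N}_{0}P$, takes values in a fixed line $Cv$ along $\overline{N}_{0}$ (for a fixed $v\in\sigma:=(\Ind_{B\cap L}^{L}\chi)^{\cts}$), and differs from such a model function only by errors small in the directions away from $\overline{N}_{0}P$ and in the $\sigma$-direction.

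Finally, viewing $(\Ind_{B}^{G}\chi)^{\cts}=(\Ind_{P}^{G}\sigma)^{\cts}$, I would extract sequences $h_{n}\in\pi$ and $h_{n}'\in(\Ind_{P}^{G}\sigma)^{\cts}$ ($n\ge1$) with $\supp(h_{n}')\subset\overline{N}_{0}P$, $h_{n}'(x)\in Cv$ for $x\in\overline{N}_{0}$, $\inf_{n}\lVert h_{n}'\rVert>0$ and $h_{n}-h_{n}'\to0$. The main technical obstacle is exactly these last two requirements together: the $h_{n}$ are built from translates of a fixed vector by increasingly contracting elements of $Z_{L}^{+}$, and the errors $h_{n}-h_{n}'$ decay only like a geometric sequence of ratio $\lvert q\rvert/\lvert\chi_{2}^{-1}\chi_{3}(\varpi_{F})\rvert$, so the strict inequality $\lvert\chi_{2}^{-1}\chi_{3}(\varpi_{F})\rvert>\lvert q\rvert$ is what is needed for convergence while $\lVert h_{n}'\rVert$ stays bounded away from $0$. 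Once this is in place, Corollary~\ref{cor:irreducibility criterion} gives $\pi=(\Ind_{P}^{G}\sigma)^{\cts}=(\Ind_{B}^{G}\chi)^{\cts}$; in particular $\pi$ is dense, and by the first step $(\Ind_{B}^{G}\chi)^{\cts}$ is irreducible. The soft structural part (the reduction and the geometric-lemma bookkeeping) is routine given \cite{unitary}; the real work is the explicit formula for $\theta^{-1}$ and the norm estimates behind the fourth bullet.
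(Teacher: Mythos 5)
Your outline is essentially the paper's own roadmap (reduce via Theorem~\ref{thm:criterion} to density of smooth irreducible subrepresentations, identify $\pi^{N_{0},Z_{L}^{+}=\chi}$ inside $(\Ind_{B\cap L}^{L}\chi)^{\sm}$ via the geometric lemma, invert the restriction map $\theta$ explicitly, and conclude with Corollary~\ref{cor:irreducibility criterion}), so the approach is the right one; but as a proof it has genuine gaps precisely at the two places you yourself flag as ``the real work''. The main one is the inversion of $\theta$: your proposed construction --- extend $f$ by zero over the big cell and average over $N_{0}$ ``modulo a deep congruence subgroup'' --- would not produce $\theta^{-1}(f)$. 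An element of $((\Ind_{B}^{G}\chi)^{\sm})^{N_{0},Z_{L}^{+}=\chi}$ is pinned down by the Hecke eigenvalue equations $\tau_{z}f=\chi(z)f$ for $z\in Z_{L}^{+}$, not merely by $N_{0}$-invariance, and its support is in general not contained in $\overline{N}P$: by Theorem~\ref{thm:explicit} the values on the cells $I\dot s_{2}B$, $I\dot s_{2}\dot s_{1}B$ and $I\dot w_{0}B$ are generically nonzero and carry the factors $(q-1)/(q-\eta_{2}(\varpi_{F}))$ and $(q-1)/\bigl(q(q^{2}-\eta_{1}\eta_{2}(\varpi_{F}))\bigr)$, which arise from solving the eigenvalue equations (summing geometric series); this is where $\eta_{2}\ne\lvert\cdot\rvert_{F}^{-1}$ and $\eta_{1}\eta_{2}\ne\lvert\cdot\rvert_{F}^{-2}$ (i.e.\ $\chi_{1}\chi_{3}^{-1}\ne\lvert\cdot\rvert_{F}^{2}$) are actually used, rather than to ``make $\theta$ surjective''. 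So this step cannot be waved through with the geometric lemma; it requires the computation that occupies most of Section~\ref{sec:expl-formula}. (Minor point: the cell-separation in the Jacquet module uses $\chi_{2}\ne\chi_{3}\lvert\cdot\rvert_{F}$ and $\chi_{1}\ne\chi_{3}\lvert\cdot\rvert_{F}^{2}$, not $\chi_{1}\ne\chi_{2}$, $\chi_{2}\ne\chi_{3}$, which enter only later.)

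The second gap is the density step, which you assert rather than establish. One must actually exhibit the vectors: the paper takes $f_{n}:=\theta^{-1}$ of the function supported on the $\dot s_{1}$-cell of $L$ with values $\gamma^{n}g(a/\varpi_{F}^{n})$, $\gamma=q/\eta_{2}(\varpi_{F})$, and the choice of $g$ is constrained in two simultaneous ways: $\int_{\mathcal{O}_{F}}g=0$, which is what forces $f_{n}|_{L}$ into the Steinberg socle when $(\Ind_{B\cap L}^{L}\chi)^{\sm}$ is reducible (your ``feed Steinberg vectors through the formula'' hides exactly this point, and note that $\pi^{N_{0},Z_{L}^{+}=\chi}$ need not be irreducible when it is the full Levi principal series), and $h(0,-1)\ne 0$, which is what gives $\inf_{n}\lVert h'_{n}\rVert>0$; that such a $g$ exists is Lemma~\ref{lem:how to take g}, and it uses $\eta_{2}\ne 1$, i.e.\ $\chi_{2}\ne\chi_{3}$. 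Finally the uniform estimate $\lVert h_{n}-h'_{n}\rVert\le r_{1}\lvert\gamma\rvert^{n}$ has to be checked cell by cell using boundedness and support properties of the auxiliary kernels $h$ and $k$ (including $k(a,0,1)=0$ for $\val(a)\ge c(\eta_{2})$), which is the content of Lemma~\ref{lem:f_n span dense subspace}. In short: correct strategy, same as the paper's, but the two steps that constitute the actual proof are missing, and the one concrete mechanism you propose for the harder of them would fail as stated.
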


\begin{lem}\label{lem:implies-thm}
Proposition~\ref{prop:final case} implies Proposition~\ref{prop:main thm, smooth}, hence Theorem~\ref{thm:main}.
\end{lem}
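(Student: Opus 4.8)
The plan is to deduce Proposition~\ref{prop:main thm, smooth} from Proposition~\ref{prop:final case}; the passage from Proposition~\ref{prop:main thm, smooth} to Theorem~\ref{thm:main} has already been carried out (via Theorem~\ref{thm:criterion} and Theorem~\ref{thm:GL2}) in the discussion preceding Proposition~\ref{prop:main thm, smooth}. The reducibility direction of Proposition~\ref{prop:main thm, smooth} is easy and does not use Proposition~\ref{prop:final case}: if $\chi_{1} = \chi_{2}$ we take $\alggrp{P} = \alggrp{L}\alggrp{N}$ the standard parabolic corresponding to $3 = 2 + 1$, and if $\chi_{2} = \chi_{3}$ the one corresponding to $3 = 1 + 2$; in either case the relevant $\GL_{2}$-principal series is reducible by Theorem~\ref{thm:GL2}, so $(\Ind_{B\cap L}^{L}\chi)^{\cts}$ has a proper nonzero closed subrepresentation, and transitivity of parabolic induction produces one inside $(\Ind_{B}^{G}\chi)^{\cts}$. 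So the work is in the converse: assuming $\chi$ smooth with $\chi_{1}\ne\chi_{2}$ and $\chi_{2}\ne\chi_{3}$, show that $(\Ind_{B}^{G}\chi)^{\cts}$ is irreducible.

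Here I would first invoke Lemma~\ref{lem:many case} to dispose of the case $\chi_{1}\chi_{3}^{-1}\ne\lvert\cdot\rvert_{F}$, and so assume $\chi_{1}\chi_{3}^{-1} = \lvert\cdot\rvert_{F}$ (which in particular forces $\chi_{1}\chi_{3}^{-1}\ne\lvert\cdot\rvert_{F}^{2}$, since $\lvert\cdot\rvert_{F}\ne\mathbf{1}$). To exploit the remaining symmetry, introduce the outer automorphism $\theta$ of $\alggrp{G} = \Res_{F/\Q_{p}}\GL_{3}$ given by $\theta(g) = w_{0}\,{}^{t}g^{-1}w_{0}^{-1}$, where $w_{0}$ is the antidiagonal permutation matrix: it is an involution preserving $\alggrp{B}$ and $\alggrp{T}$ and acting on $T$ by $\diag(t_{1},t_{2},t_{3})\mapsto\diag(t_{3}^{-1},t_{2}^{-1},t_{1}^{-1})$. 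Precomposition of functions with $\theta$ is a topological isomorphism of Banach spaces $\Phi\colon(\Ind_{B}^{G}\chi)^{\cts}\cong(\Ind_{B}^{G}\chi^{\theta})^{\cts}$, where $\chi^{\theta} := \chi_{3}^{-1}\otimes\chi_{2}^{-1}\otimes\chi_{1}^{-1}$, satisfying $\Phi(\gamma\cdot f) = \theta(\gamma)\cdot\Phi(f)$ for $\gamma\in G$; hence $\Phi$ sets up a bijection between closed $G$-invariant subspaces, and $(\Ind_{B}^{G}\chi)^{\cts}$ is irreducible if and only if $(\Ind_{B}^{G}\chi^{\theta})^{\cts}$ is. One checks that $\chi^{\theta}$ is smooth, that $\chi^{\theta}_{1}\ne\chi^{\theta}_{2}$ (equivalent to $\chi_{2}\ne\chi_{3}$) and $\chi^{\theta}_{2}\ne\chi^{\theta}_{3}$ (equivalent to $\chi_{1}\ne\chi_{2}$), and that $\chi^{\theta}_{1}(\chi^{\theta}_{3})^{-1} = \chi_{1}\chi_{3}^{-1} = \lvert\cdot\rvert_{F}$; so $\chi^{\theta}$ satisfies exactly the same standing hypotheses as $\chi$.

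It remains to note that, in the case $\chi_{1}\chi_{3}^{-1} = \lvert\cdot\rvert_{F}$, at least one of $\chi$ and $\chi^{\theta}$ also satisfies the remaining hypothesis $\lvert\chi_{2}^{-1}\chi_{3}(\varpi_{F})\rvert > \lvert q\rvert$ of Proposition~\ref{prop:final case}. Indeed, $(\chi_{1}\chi_{2}^{-1})(\chi_{2}\chi_{3}^{-1}) = \chi_{1}\chi_{3}^{-1} = \lvert\cdot\rvert_{F}$, so evaluating at $\varpi_{F}$ and applying $\lvert\cdot\rvert$ yields $\lvert\chi_{1}\chi_{2}^{-1}(\varpi_{F})\rvert\cdot\lvert\chi_{2}\chi_{3}^{-1}(\varpi_{F})\rvert = \lvert q\rvert^{-1} > 1$, and therefore at least one of the two factors is $< \lvert q\rvert^{-1}$. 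If $\lvert\chi_{2}\chi_{3}^{-1}(\varpi_{F})\rvert < \lvert q\rvert^{-1}$ then $\lvert\chi_{2}^{-1}\chi_{3}(\varpi_{F})\rvert > \lvert q\rvert$ and Proposition~\ref{prop:final case} applies directly to $\chi$; if instead $\lvert\chi_{1}\chi_{2}^{-1}(\varpi_{F})\rvert < \lvert q\rvert^{-1}$ then, since $(\chi^{\theta}_{2})^{-1}\chi^{\theta}_{3} = (\chi_{1}\chi_{2}^{-1})^{-1}$, the analogous inequality holds for $\chi^{\theta}$, Proposition~\ref{prop:final case} applies to $\chi^{\theta}$, and $\Phi$ transports irreducibility back to $(\Ind_{B}^{G}\chi)^{\cts}$. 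I do not expect this to present a real obstacle: the one thing to be a little careful about is checking that $\theta$ genuinely induces a topological isomorphism of the Banach principal series (and keeping the character ratios straight); all the substance is in Proposition~\ref{prop:final case}, which is assumed here.
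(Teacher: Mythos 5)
Your proposal is correct and follows essentially the same route as the paper: reduce via Lemma~\ref{lem:many case} to the case $\chi_1\chi_3^{-1}=\lvert\cdot\rvert_F$, apply Proposition~\ref{prop:final case} to $\chi$ and to its image under the outer automorphism $g\mapsto \dot w_0\,{}^tg^{-1}\dot w_0$, and observe that $\lvert\chi_1^{-1}\chi_3(\varpi_F)\rvert=\lvert q\rvert$ forces at least one of $\lvert\chi_2^{-1}\chi_3(\varpi_F)\rvert>\lvert q\rvert$ or $\lvert\chi_1^{-1}\chi_2(\varpi_F)\rvert>\lvert q\rvert$. The only (harmless) difference is that you re-prove the reducibility direction of Proposition~\ref{prop:main thm, smooth}, which the paper had already established in the discussion preceding it.
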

\begin{proof}
By Lemma~\ref{lem:many case}, it is sufficient to prove that if $\chi_{1}\ne\chi_{2}$, $\chi_{2}\ne \chi_{3}$ and $\chi_{1}\chi_{3}^{-1} = \lvert \cdot\rvert_{F}$, then $(\Ind_{B}^{G}\chi)^{\cts}$ is irreducible.
As $\chi_{1}\chi_{3}^{-1} \ne \lvert \cdot\rvert_F^2$, Proposition~\ref{prop:final case} implies that $(\Ind_{B}^{G}\chi)^{\cts}$ is irreducible if $\lvert \chi_{2}^{-1}\chi_{3}(\varpi_{F})\rvert > \lvert q\rvert$.

Define $\iota\colon \GL_{3}\to \GL_{3}$ by $\iota(g) = \dot w_{0}\cdot {}^{t}g^{-1}\cdot \dot w_{0}$, where $\dot w_{0}$ is a lift of the longest element of the Weyl group.
Since $\iota(B) = B$, we have $(\Ind_{B}^{G}\chi)^{\cts}\circ\iota \cong (\Ind_{B}^{G}\chi\circ\iota)^{\cts}$.
Therefore $(\Ind_{B}^{G}\chi)^{\cts}$ is irreducible if and only if $(\Ind_{B}^{G}\chi\circ\iota)^{\cts}$ is.
Hence, from the first paragraph of this proof, $(\Ind_{B}^{G}\chi)^\cts$ is irreducible if $\lvert \chi_{1}^{-1}\chi_{2}(\varpi_{F})\rvert > \lvert q\rvert$.
Since $\lvert\chi_{1}^{-1}\chi_{3}(\varpi_{F})\rvert = \lvert q\rvert$, we have $\lvert \chi_{1}^{-1}\chi_{2}(\varpi_{F})\rvert > \lvert q\rvert$ or $\lvert \chi_{2}^{-1}\chi_{3}(\varpi_{F})\rvert > \lvert q\rvert$.
Hence we get Theorem~\ref{thm:main}.
\end{proof}

\subsection{Jacquet modules}
\label{sec:jacquet-modules}
To prove Proposition \ref{prop:final case}, we use Jacquet modules in smooth representation theory.
Let $\alggrp{P} = \alggrp{L}\alggrp{N}$ be the standard parabolic subgroup corresponding to $3 = 2 + 1$, and recall that $\alggrp{Z}_{\alggrp L}$ denotes the center of $\alggrp{L}$.
Fix a compact open subgroup $N_{0}$ of $N$ and set $L^+ := \{\ell\in L\mid \ell N_{0}\ell^{-1}\subset N_{0}\}$ and $Z_{L}^{+} := Z_L \cap L^+$.
If $\pi$ is an admissible smooth representation of $G$, then $\ell\in L^{+}$ acts on the subspace of $N_{0}$-fixed vectors $\pi^{N_{0}}$ in $\pi$ by the Hecke action
\begin{equation}
\tau_{\ell}(v) := \frac{1}{[N_{0}:\ell N_{0}\ell^{-1}]}\sum_{n\in N_{0}/\ell N_{0}\ell^{-1}}n\ell v = \int_{N_0} n\ell v\, dn\label{eq:hecke}
\end{equation}
for $v\in \pi^{N_{0}}$, where the Haar measure is normalized such that the volume of $N_0$ is 1.
Set $\pi^{N_{0},Z_{L}^{+} = \chi} := \{v\in \pi^{N_{0}}\mid \text{$\tau_{z}(v) = \chi(z)v$ for all $z\in Z_{L}^{+}$}\}$, which has a natural action of $L$,
with $L^+$ acting via~\eqref{eq:hecke} and $Z_L$ acting via $\chi$. (This is well defined by \cite[Proposition 3.3.6]{MR2292633}.)

Let $\pi_{N}$ be the space of $N$-coinvariants (i.e.\ the unnormalized Jacquet module) and define $\pi_{N}^{Z_{L} = \chi}$ analogously to above.
Then by \cite[Propositions~3.4.9, 4.3.4]{MR2292633}, the natural projection $\pi^{N_{0}}\to \pi_{N}$ induces an $L$-linear isomorphism $\pi^{N_{0},Z_{L}^{+} = \chi}\simeq \pi_{N}^{Z_{L} = \chi}$.

Let $\pi := (\Ind_{B}^{G}\chi)^{\sm}$.
Then by the geometric lemma~\cite[5.2~Theorem]{MR0579172}, $\pi_{N}$ has a filtration $0 = F_{0}\subset F_{1}\subset F_{2}\subset F_{3} = (\Ind_{B}^{G}\chi)^{\sm}_{N}$ such that $F_{3}/F_{2}\simeq (\Ind_{B\cap L}^{L}\chi)^{\sm}$, $F_{2}/F_{1}\simeq (\Ind_{B\cap L}^{L}\chi')^{\sm}$ and $F_{1}/F_{0}\simeq (\Ind_{B\cap L}^{L}\chi'')^{\sm}$, where $\chi' := \chi_{1}\boxtimes (\chi_{3}\lvert\cdot\rvert_{F})\boxtimes (\chi_{2}\lvert \cdot\rvert_{F}^{-1})$ and $\chi'' := (\chi_{2}\lvert\cdot\rvert_{F})\boxtimes(\chi_{3}\lvert\cdot\rvert_{F})\boxtimes(\chi_{1}\lvert\cdot\rvert_F^{-2})$.
Hence if $\chi_{2}\ne \chi_{3}\lvert\cdot\rvert_{F}$ and $\chi_{1}\ne \chi_{3}\lvert\cdot\rvert_{F}^{2}$, then $\chi|_{Z_{L}}\ne \chi'|_{Z_{L}}$, and $\chi|_{Z_{L}}\ne \chi''|_{Z_{L}}$.
Therefore $((\Ind_{B}^{G}\chi)^{\sm})_{N}^{Z_{L} = \chi}\simeq (\Ind_{B\cap L}^{L}\chi)^{\sm}$.
Hence if $\chi_{2}\ne \chi_{3}\lvert\cdot\rvert_{F}$ and $\chi_{1}\ne \chi_{3}\lvert\cdot\rvert_{F}^{2}$ we have an $L$-linear isomorphism
\begin{equation}\label{eq:jacquet module}
((\Ind_{B}^{G}\chi)^{\sm})^{N_{0},Z_{L}^{+} = \chi}\simeq (\Ind_{B\cap L}^{L}\chi)^{\sm}.
\end{equation}
Note that the isomorphism is induced by the restriction $(\Ind_{B}^{G}\chi)^{\sm}\ni f\mapsto f|_{L}\in (\Ind_{B\cap L}^{L}\chi)^{\sm}$.
(This follows either by the proof of the geometric lemma, or because the restriction is easily seen to induce a non-zero map $((\Ind_{B}^{G}\chi)^{\sm})_{N}^{Z_{L} = \chi} \to (\Ind_{B\cap L}^{L}\chi)^{\sm}$, hence an isomorphism.)

\subsection{Explicit formulas}
\label{sec:expl-formula}
We now take $N_{0} := N\cap K$ with $K := \GL_{3}(\mathcal{O}_{F})$.
We calculate the inverse of the map \eqref{eq:jacquet module} explicitly.

Let $\eta_i := \chi_i^{-1} \chi_{i+1}$. For a character $\eta \colon F^{\times} \to C^{\times}$ let $c(\eta) \in \Z_{\ge 0}$ denote
the conductor of $\eta$, i.e.\ the smallest integer $c \ge 0$ such that $\eta$ is trivial on $(1+(\varpi_{F}^{c})) \cap \mathcal{O}_{F}^{\times}$.
We also use the following representatives of simple reflections of the Weyl group:
\begin{equation}\label{eq:reps}
  \dot s_1 := \begin{pmatrix}0 & -1 & 0\\ 1 & 0 & 0 \\ 0 & 0 & 1\end{pmatrix}, \quad  \dot s_2 := \begin{pmatrix}1 & 0 & 0\\ 0 & 0 & -1 \\ 0 & 1 & 0\end{pmatrix}.
\end{equation}
We let $\dot w_0 := \dot s_1 \dot s_2 \dot s_1 = \dot s_2 \dot s_1 \dot s_2$.
Then $\{e,\dot s_1,\dot s_2,\dot s_1\dot s_2,\dot s_2\dot s_1,\dot w_0\}$ is a full set of representatives for $S_3$.
Let $I$ be the ``upper'' Iwahori subgroup, namely $I$ is the set of $g\in \GL_{3}(\mathcal{O}_{F})$ such that $g\pmod{\varpi_{F}}$ is an upper-triangular matrix.
Then we have $G = \coprod_{w\in S_{3}}I\dot{w}B$, where $\dot{w}$ are our representatives of $w \in S_{3}$, so to specify
$f\in ((\Ind_{B}^{G}\chi)^{\sm})^{N\cap K,Z_{L}^{+} = \chi}$ it suffices to describe the values of $f$ on $(N\cap K)\backslash I \dot w B/B$ for each $w \in S_3$, and this is what we will do now.
(In fact, it will be convenient to describe it on a slightly larger set.) We normalize the Haar measure on $F$ such that the volume of $\mathcal{O}_{F}$ is $1$.
If $\mathrm{C}$ denotes any condition, then $\delta_{\mathrm{C}} = 1$ if $\mathrm{C}$ holds and $\delta_{\mathrm{C}} = 0$ otherwise.

\begin{thm}\label{thm:explicit}
Assume that $\eta_{2}\ne \lvert\cdot\rvert_{F}^{-1}$ and $\eta_{1}\eta_{2}\ne \lvert \cdot\rvert_{F}^{-2}$.
Let $f\in ((\Ind_{B}^{G}\chi)^{\sm})^{N\cap K,Z_{L}^{+} = \chi}$.
Then
\begin{align*}
f\begin{pmatrix}1 & 0 & 0\\ a & 1 & 0 \\ b & c & 1\end{pmatrix}
& =
  \delta_{\val(b) \ge c(\eta_1\eta_2)}\displaystyle\int_{\mathcal{O}_{F}} \eta_2(1+ct) f\begin{pmatrix}1 & 0 & 0\\ a + bt & 1 & 0\\ 0 & 0 & 1\end{pmatrix}dt 
    \quad (a\in \mathcal{O}_{F},b,c\in (\varpi_{F})),\\
f\left(\begin{pmatrix}1 & a & 0\\ 0 & 1 & 0 \\ c & b & 1\end{pmatrix}\dot s_{1}\right)
& =
  \delta_{\val(b) \ge c(\eta_1\eta_2)}
  \displaystyle\int_{\mathcal{O}_{F}} \eta_2(1+ct) f\left(\begin{pmatrix}1 & a + bt & 0\\ 0 & 1 & 0\\ 0 & 0 & 1\end{pmatrix}\dot s_{1}\right)dt
\quad (a\in \mathcal{O}_{F},b,c\in (\varpi_{F})),\\
f\left(\begin{pmatrix}1 & 0 & 0\\ a & 1 & 0\\ c & 0 & 1\end{pmatrix}\dot s_{2}\right)
& =
\delta_{\val(c) \ge c(\eta_1\eta_2)} \Bigg\{
\int_{\mathcal{O}_{F}} \eta_2(t) \left[f\begin{pmatrix}1 & 0 & 0\\ a+ct & 1 & 0\\ 0 & 0 & 1\end{pmatrix} - f\begin{pmatrix}1 & 0 & 0\\ a & 1 & 0\\ 0 & 0 & 1\end{pmatrix}\right] dt \\
& \quad + \delta_{c(\eta_2) = 0}\cdot \frac{q - 1}{q - \eta_{2}(\varpi_{F})}  f\begin{pmatrix}1 & 0 & 0\\ a & 1 & 0\\ 0 & 0 & 1\end{pmatrix}\Bigg\}
\quad(a\in \mathcal{O}_{F},c\in (\varpi_{F})),\\
f\left(\begin{pmatrix}1 & a & 0\\ 0 & 1 & 0\\ 0 & c & 1\end{pmatrix}\dot s_{1}\dot s_{2}\right)
& =
\delta_{\val(c) \ge c(\eta_1\eta_2)} \Bigg\{
\int_{\mathcal{O}_{F}} \eta_2(t) \left[f\left(\begin{pmatrix}1 & a-ct & 0\\ 0 & 1 & 0\\ 0 & 0 & 1\end{pmatrix}\dot s_{1}\right) - f\left(\begin{pmatrix}1 & a & 0\\ 0 & 1 & 0\\ 0 & 0 & 1\end{pmatrix}\dot s_{1}\right)\right] dt \\
& \quad + \delta_{c(\eta_2) = 0}\cdot \frac{q - 1}{q - \eta_{2}(\varpi_{F})}  f\left(\begin{pmatrix}1 & a & 0\\ 0 & 1 & 0\\ 0 & 0 & 1\end{pmatrix}\dot s_1\right)\Bigg\}
\quad(a\in \mathcal{O}_{F},c\in (\varpi_{F})),\\
f\left(\begin{pmatrix}1 & 0 & 0\\ a & 1 & 0\\ 0 & 0 & 1\end{pmatrix}\dot s_{2}\dot s_{1}\right)
& = \frac{\delta_{c(\eta_1\eta_2)=0} (q-1)}{q(q^2-\eta_1\eta_2(\varpi_{F}))}
\Bigg\{
\int_{\mathcal{O}_{F}} \eta_2(t) \left[f\begin{pmatrix}1 & 0 & 0\\ a+t & 1 & 0\\ 0 & 0 & 1\end{pmatrix} - f\begin{pmatrix}1 & 0 & 0\\ a & 1 & 0\\ 0 & 0 & 1\end{pmatrix}\right] dt \\
& \quad + \delta_{c(\eta_2) = 0}\cdot \frac{q - 1}{q - \eta_{2}(\varpi_{F})} f\begin{pmatrix}1 & 0 & 0\\ a & 1 & 0\\ 0 & 0 & 1\end{pmatrix}
+ \displaystyle\int_{(\varpi_{F})} \eta_2(1-at) f\left(\begin{pmatrix}1 & t & 0\\ 0 & 1 & 0\\ 0 & 0 & 1\end{pmatrix}\dot s_{1}\right)dt\Bigg\}\\
&\hspace{10cm} (a\in \mathcal{O}_{F}),\\
f\left(\begin{pmatrix}1 & a & 0\\ 0 & 1 & 0\\ 0 & 0 & 1\end{pmatrix}\dot w_{0}\right)
& = \frac{\delta_{c(\eta_1\eta_2)=0} (q-1)}{q(q^2-\eta_1\eta_2(\varpi_{F}))}
\Bigg\{
\int_{\mathcal{O}_{F}} \eta_2(t) \left[f\left(\begin{pmatrix}1 & a-t & 0\\ 0 & 1 & 0\\ 0 & 0 & 1\end{pmatrix}\dot s_{1}\right) - f\left(\begin{pmatrix}1 & a & 0\\ 0 & 1 & 0\\ 0 & 0 & 1\end{pmatrix}\dot s_{1}\right)\right] dt \\
& \quad + \delta_{c(\eta_2) = 0}\cdot \frac{q - 1}{q - \eta_{2}(\varpi_{F})} f\left(\begin{pmatrix}1 & a & 0\\ 0 & 1 & 0\\ 0 & 0 & 1\end{pmatrix}\dot s_{1}\right)
+ \displaystyle\int_{(\varpi_{F})} \eta_2(-1+at) f\begin{pmatrix}1 & 0 & 0\\ t & 1 & 0\\ 0 & 0 & 1\end{pmatrix} dt\Bigg\}\\
&\hspace{10cm} (a\in \mathcal{O}_{F}).
\end{align*}
\end{thm}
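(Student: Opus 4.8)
The plan is to verify the six displayed identities directly for an arbitrary $f\in((\Ind_{B}^{G}\chi)^{\sm})^{N\cap K,Z_{L}^{+}=\chi}$. Since the restriction map $\theta$ of \eqref{eq:jacquet module} is an isomorphism, $f$ is determined by $\phi:=f|_{L}\in(\Ind_{B\cap L}^{L}\chi)^{\sm}$, so this amounts to computing $\theta^{-1}$. Three properties of $f$ are available: (a) left invariance under $N_{0}=N\cap K$; (b) the equivariance $f(gb)=\chi(b)^{-1}f(g)$ for $b\in B$ (in particular $f(gu)=f(g)$ for unipotent upper-triangular $u$, as $\chi$ is smooth); and (c) the Hecke-eigenvector relation. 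Writing $a_{k}:=\diag(\varpi_{F}^{k},\varpi_{F}^{k},1)\in Z_{L}^{+}$ and $n_{m}:=\left(\begin{smallmatrix}1&0&m_{1}\\0&1&m_{2}\\0&0&1\end{smallmatrix}\right)$, I will use (c) in the two forms
\begin{equation*}
  f(g)=\chi(a_{k})^{-1}q^{-2k}\!\!\!\sum_{m_{1},m_{2}\bmod\varpi_{F}^{k}}\!\!\! f(a_{k}^{-1}n_{m}^{-1}g)\qquad\text{and}\qquad f(a_{k}g)=\chi(a_{k})^{-1}f(g)\quad(k\ge0),
\end{equation*}
the second following from the first applied to $g\mapsto f(a_{k}g)$ together with (a).

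\textbf{Reduction to three cells.} The matrix $\dot s_{1}$ lies in $L\cap K$, normalizes $N_{0}$ and centralizes $Z_{L}$, so $f\mapsto\dot s_{1}f$ preserves $((\Ind_{B}^{G}\chi)^{\sm})^{N\cap K,Z_{L}^{+}=\chi}$ and corresponds under $\theta$ to the (invertible) translation $\phi\mapsto(\ell\mapsto\phi(\dot s_{1}^{-1}\ell))$ of $(\Ind_{B\cap L}^{L}\chi)^{\sm}$. Since $\dot s_{1}\cdot e=\dot s_{1}$, $\dot s_{1}\cdot\dot s_{2}=\dot s_{1}\dot s_{2}$ and $\dot s_{1}\cdot\dot s_{2}\dot s_{1}=\dot w_{0}$, the formulas on the $\dot s_{1}$-, $\dot s_{1}\dot s_{2}$- and $\dot w_{0}$-cells follow from those on the $e$-, $\dot s_{2}$- and $\dot s_{2}\dot s_{1}$-cells by applying $\dot s_{1}$ on the left and rewriting matrix entries (the accompanying changes of variable $t\mapsto-t$ being harmless); I would do this once and then treat only the three remaining cases, in the order $e$, $\dot s_{2}$, $\dot s_{2}\dot s_{1}$ of increasing length.

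\textbf{The three core computations.} For the $e$-cell one starts from $g=\left(\begin{smallmatrix}1&0&0\\a&1&0\\b&c&1\end{smallmatrix}\right)$ and applies (c) with $k$ large (relative to $\val(b)$, $\val(c)$, the conductors of $\eta_{1},\eta_{2}$, and the invariance radius of $f$). A short matrix computation shows that every leading principal minor of $a_{k}^{-1}n_{m}^{-1}g$ is a power of $\varpi_{F}$ times a unit, so each translate lies in the open Bruhat cell $\overline U T U$; writing $a_{k}^{-1}n_{m}^{-1}g=\bar u\,t\,u'$ accordingly, one finds that $\chi(a_{k})^{-1}\chi(t)^{-1}$ collapses to $\eta_{1}(1-m_{1}b)\,\eta_{2}\!\bigl(1-m_{1}b+c(m_{1}a-m_{2})\bigr)$, while $\bar u$, once its right $\overline N$-factor (which lies in the invariance radius of $f$, $k$ being large) is dropped, becomes $\left(\begin{smallmatrix}1&0&0\\u_{1}(m)&1&0\\0&0&1\end{smallmatrix}\right)\in L$ with $u_{1}(m):=(a-m_{2}b)(1-m_{1}b)^{-1}$. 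Splitting the $\eta_{2}$-factor as $\eta_{2}(1-m_{1}b)\,\eta_{2}(1+c(m_{1}a-m_{2})(1-m_{1}b)^{-1})$ and using that $\sum_{m_{1}}\eta_{1}\eta_{2}(1-m_{1}b)$ equals the full measure if $\val(b)\ge c(\eta_{1}\eta_{2})$ and vanishes otherwise (orthogonality of characters), the double sum collapses, after changing $m_{2}$ into the integration variable, to the right-hand side of the first identity in the statement. The $\dot s_{2}$-cell is handled the same way, except that now one $2\times2$ leading minor of $a_{k}^{-1}n_{m}^{-1}g$ is $\varpi_{F}^{-2k}(m_{1}a-m_{2})$ times a unit, so a translate leaves the open cell precisely on the sublocus $m_{1}a\equiv m_{2}$; there the computation recurses onto a $\dot s_{2}$-cell with a smaller parameter, and summing the resulting geometric series of ratio $\eta_{2}(\varpi_{F})/q$ produces the factor $\tfrac{q-1}{q-\eta_{2}(\varpi_{F})}$ --- convergent and evaluated exactly because $\eta_{2}\ne\lvert\cdot\rvert_{F}^{-1}$ --- while the complementary locus yields the ``difference'' integral appearing in the statement, which measures the failure of $f$ to be left $N$-invariant. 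The $\dot s_{2}\dot s_{1}$-cell combines both phenomena: its translates additionally meet two cells whose $f$-values are supplied by the already-proved $e$- and $\dot s_{1}$-formulas, and a geometric series of ratio $\eta_{1}\eta_{2}(\varpi_{F})/q^{2}$ produces the overall factor $\tfrac{\delta_{c(\eta_{1}\eta_{2})=0}(q-1)}{q(q^{2}-\eta_{1}\eta_{2}(\varpi_{F}))}$, well defined because $\eta_{1}\eta_{2}\ne\lvert\cdot\rvert_{F}^{-2}$.

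\textbf{The main obstacle.} The analytic ingredients --- orthogonality of multiplicative characters over $\mathcal O_{F}/\varpi_{F}^{j}$ and summation of the geometric series --- are routine; the real labour is the combinatorial book-keeping for the large cells $\dot s_{2}\dot s_{1}$ and $\dot w_{0}$. There one must split the index set $\{(m_{1},m_{2})\bmod\varpi_{F}^{k}\}$ according to which $I\dot w B$-stratum each translate $a_{k}^{-1}n_{m}^{-1}g$ lies in (depending on the valuations of $m_{1}a-m_{2}$ and related quantities relative to the conductors), perform a separate $\overline U T U$-factorization on each stratum to read off the correct character of $B$, check that the ``far'' contributions telescope, and identify the three surviving integrals --- all while keeping the outputs inside the slightly enlarged sets of representatives used in the statement, so that the six formulas glue consistently. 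I would organise this as an induction on the Bruhat length of $\dot w$, so that the $\dot s_{2}\dot s_{1}$-computation may quote, rather than repeat, the $e$- and $\dot s_{1}$-cases.
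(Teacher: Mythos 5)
Your proposal is correct and is essentially the paper's own argument: the paper likewise deduces the even-numbered formulas by applying $\dot s_1$, then proves the $e$-, $\dot s_2$- and $\dot s_2\dot s_1$-cases in that order by averaging against $\diag(\varpi_F^{k},\varpi_F^{k},1)\in Z_L^{+}$ (the Hecke eigenvalue relation) with $k$ large, decomposing the translates, and using smoothness, orthogonality of characters (giving the $\delta_{\val\ge c(\eta_1\eta_2)}$ factors), and the hypotheses $\eta_2\ne\lvert\cdot\rvert_F^{-1}$, $\eta_1\eta_2\ne\lvert\cdot\rvert_F^{-2}$ to solve for the self-referential degenerate-stratum terms. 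The differences are only organizational: the paper reduces to $a=0$ and packages the $e$-case through two preliminary lemmas on the $\dot s_2\dot s_1$-cell rather than your direct $\overline{U}TU$-factorization of the translates, and where you speak of ``summing a convergent geometric series'' the paper works with a finite sum (its Lemma on $\int_{\mathcal{O}_F\setminus(\varpi_F^k)}\eta_2$) and then solves a linear equation with nonzero coefficient --- which is the correct reading, since no $p$-adic convergence of an infinite series is available here.
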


Here the quantity \[
\delta_{c(\eta_{2}) = 0}\frac{1}{q - \eta_{2}(\varpi_{F})}
\]
is well defined and independent of our choice of $\varpi_F$: if $c(\eta_{2}) \ne 0$, then this is zero and if $c(\eta_{2}) = 0$ then $\eta_{2}(\varpi_{F})\ne \lvert \varpi_{F}\rvert_F^{-1} = q$ from the assumption $\eta_{2}\ne \lvert\cdot\rvert_{F}^{-1}$.
Similarly,
\[
\frac{\delta_{c(\eta_1\eta_2)=0} (q-1)}{q(q^2-\eta_1\eta_2(\varpi_{F}))}
\]
is well defined and independent of our choice of $\varpi_F$ because $\eta_{1}\eta_{2}\ne \lvert \cdot\rvert_{F}^{-2}$.

We prove Theorem~\ref{thm:explicit} in this subsection.
For $k = 2,4,6$, the $k$-th formula in the theorem follows from the $(k - 1)$-th formula by replacing $f$ with $\dot s_{1}^{-1}f$.
(When $k = 6$ it helps to observe that $\eta_2(-1) = \eta_1(-1)$ if the formula is nonzero, as this only happens when $\eta_1\eta_2$ is unramified.)

Moreover, we may assume that $a = 0$.
In general form can be obtained by replacing $f\in (\Ind_{B}^{G}\chi)^{\sm,N\cap K,Z_{L}^{+} = \chi}$ with
\[
\begin{pmatrix}
1 & 0 & 0\\
a & 1 & 0\\
0 & 0 & 1
\end{pmatrix}^{-1}
f.
\]
This reduction step is obvious for the first and third formulas.
For the fifth formula, for $a\in \mathcal{O}_{F}\setminus\{0\}$, we use
\begin{align*}
\int_{(\varpi_{F})}f\left(\begin{pmatrix}
1 & 0 & 0\\
a & 1 & 0\\
0 & 0 & 1
\end{pmatrix}
\begin{pmatrix}
1 & t & 0\\
0 & 1 & 0\\
0 & 0 & 1
\end{pmatrix}
\dot{s}_{1}
\right)dt
& = 
\int_{(\varpi_{F})}f\left(\begin{pmatrix}
1 & t(1 + at)^{-1} & 0\\
0 & 1 & 0\\
0 & 0 & 1
\end{pmatrix}
\dot{s}_{1}
\begin{pmatrix}
1 + at & -a & 0\\
0 & (1 + at)^{-1} & 0\\
0 & 0 & 1
\end{pmatrix}
\right)dt\\
& = 
\int_{(\varpi_{F})}
\eta_{1}(1 + at)f\left(\begin{pmatrix}
1 & t(1 + at)^{-1} & 0\\
0 & 1 & 0\\
0 & 0 & 1
\end{pmatrix}
\dot{s}_{1}
\right)dt\\
& = 
\int_{1 + (a\varpi_{F})}
\eta_{1}(t_{1})f\left(\begin{pmatrix}
1 & a^{-1}(1 - t_{1}^{-1}) & 0\\
0 & 1 & 0\\
0 & 0 & 1
\end{pmatrix}
\dot{s}_{1}
\right)\lvert a\rvert_F^{-1}dt_{1}\\
& = 
\int_{1 + (a\varpi_{F})}
\eta_{1}(t_{2})^{-1}f\left(\begin{pmatrix}
1 & a^{-1}(1 - t_{2}) & 0\\
0 & 1 & 0\\
0 & 0 & 1
\end{pmatrix}
\dot{s}_{1}
\right)\lvert a\rvert_F^{-1}dt_{2}\\
& = 
\int_{(\varpi_{F})}
\eta_{1}(1 - at_{3})^{-1}f\left(\begin{pmatrix}
1 & t_{3} & 0\\
0 & 1 & 0\\
0 & 0 & 1
\end{pmatrix}
\dot{s}_{1}
\right)dt_{3},
\end{align*}
where $t_{1} := 1 + at$, $t_{2} := t_{1}^{-1}$ and $t_{3} := a^{-1}(1 - t_{2})$.
Finally notice that if $\delta_{c(\eta_{1}\eta_{2}) = 0} \ne 0$ then $\eta_{1}(1 - at_{3})^{-1} = \eta_{2}(1 - at_{3})$ for $t_{3}\in (\varpi_{F})$.

Fix $f\in (\Ind_{B}^{G}\chi)^{\sm,N\cap K,Z_{L}^{+} = \chi}$.

\begin{lem}\label{lem:s2s1-formula}
Let $x\in F$ and $D\subset F$ a compact subset such that $x\notin D$.
Then for sufficiently large $y$, we have
\[
f\left(\begin{pmatrix}
1 & 0 & y\\
x & 1 & ay\\
0 & 0 & 1
\end{pmatrix}\dot s_{2}\dot s_{1}\right)
=
\eta_1\eta_2(y)\eta_2(a - x)f\begin{pmatrix}
1 & 0 & 0\\
a & 1 & 0\\
0 & 0 & 1
\end{pmatrix}
\]
for all $a\in D$.
\end{lem}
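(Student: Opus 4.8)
The plan is to reduce the statement to an elementary computation inside a single Bruhat cell; in fact only the defining transformation property of $f \in (\Ind_{B}^{G}\chi)^{\sm}$ and the local constancy of $f$ will be used, not the Hecke-eigenvector condition. First I would multiply out
\[
\begin{pmatrix}1 & 0 & y\\ x & 1 & ay \\ 0 & 0 & 1\end{pmatrix}\dot s_{2}\dot s_{1} = \begin{pmatrix} y & -1 & 0 \\ ay & -x & -1 \\ 1 & 0 & 0\end{pmatrix} =: g .
\]
Since $y \ne 0$ and $a \ne x$ (the latter because $a \in D$ while $x \notin D$), the entries of $g$ strictly above the diagonal can be cleared by column operations: subtract $y^{-1}$ times the first column from the second, and then $(a-x)^{-1}$ times the resulting second column from the third. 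Equivalently, right multiplication of $g$ by a suitable element $u_{0}$ of the unipotent radical $U$ of $B$ puts it into lower-triangular form, and reading off the entries gives $g u_{0} = \overline{u}\, t$ with
\[
t = \diag\big(y,\, a-x,\, (y(a-x))^{-1}\big), \qquad \overline{u} = \begin{pmatrix} 1 & 0 & 0 \\ a & 1 & 0 \\ y^{-1} & y^{-1}(a-x)^{-1} & 1\end{pmatrix} .
\]

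Next I would observe that $g = \overline{u}\,(t u_{0}^{-1})$ with $t u_{0}^{-1} \in B$, so that, $\chi$ being trivial on $U$, the transformation property $f(hb) = \chi(b)^{-1}f(h)$ yields $f(g) = \chi(t u_{0}^{-1})^{-1} f(\overline{u}) = \chi(t)^{-1} f(\overline{u})$. Evaluating the character on $t$ and using $\eta_{i} = \chi_{i}^{-1}\chi_{i+1}$ gives
\[
\chi(t)^{-1} = (\chi_{1}^{-1}\chi_{3})(y)\,(\chi_{2}^{-1}\chi_{3})(a-x) = \eta_{1}\eta_{2}(y)\,\eta_{2}(a-x) ,
\]
which is exactly the scalar appearing in the asserted formula. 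It then remains to show that $f(\overline{u}) = f\begin{pmatrix}1 & 0 & 0\\ a & 1 & 0\\ 0 & 0 & 1\end{pmatrix}$ once $\val(y)$ is sufficiently negative, uniformly over $a \in D$.

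For this last point I would factor $\overline{u} = \begin{pmatrix}1 & 0 & 0\\ a & 1 & 0\\ 0 & 0 & 1\end{pmatrix} v_{y}$, where $v_{y} \in \overline{N}$ has strictly-lower-triangular entries $y^{-1}$ and $y^{-1}(a-x)^{-1}$. Since $D$ is compact and $x \notin D$ we have $\inf_{a \in D}\lvert a-x\rvert_{F} > 0$, so both entries of $v_{y}$ tend to $0$ uniformly over $a \in D$ as $\val(y) \to -\infty$, while the matrices $\begin{pmatrix}1 & 0 & 0\\ a & 1 & 0\\ 0 & 0 & 1\end{pmatrix}$ with $a \in D$ lie in a fixed compact subset of $G$. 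Because $f$ is locally constant it is uniformly so on that compact set: there is a neighbourhood $V$ of the identity with $f(g_{0}v) = f(g_{0})$ for all such $g_{0}$ and all $v \in V$. Taking $\val(y)$ negative enough that $v_{y} \in V$ for every $a \in D$ completes the proof.

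I do not expect a genuine obstacle here: the matrix identities are routine and the character computation is short. The one step requiring a little care is the uniformity over $a \in D$ in the final paragraph, and the hypotheses that $D$ is compact and $x \notin D$ are exactly what is needed to secure it.
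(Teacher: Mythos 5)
Your proof is correct and follows essentially the same route as the paper: you produce the same Iwasawa-type factorization $g=\overline u\,b$ with $\overline u=\begin{pmatrix}1&0&0\\ a&1&0\\ y^{-1}&y^{-1}(a-x)^{-1}&1\end{pmatrix}$ and $b\in B$ of diagonal $(y,\,a-x,\,(y(a-x))^{-1})$ (the paper states this identity directly, you reach it by column operations), extract the same character value $\eta_1\eta_2(y)\eta_2(a-x)$, and then conclude by smoothness of $f$. Your final paragraph merely spells out the uniformity over $a\in D$ that the paper compresses into ``the lemma follows from the smoothness of $f$''; the only blemish is the verbal ``subtract $y^{-1}$ times the first column'' (it should be added), but your displayed matrices are the correct ones, so nothing is affected.
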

\begin{proof}
By
\[
\begin{pmatrix}
1 & 0 & y\\
x & 1 & ay\\
0 & 0 & 1
\end{pmatrix}
\dot{s}_{2}\dot{s}_{1}
=
\begin{pmatrix}
1 & 0 & 0\\
a & 1 & 0\\
1/y & -1/(y(x - a)) & 1
\end{pmatrix}
\begin{pmatrix}
y & -1 & 0\\
0 & a - x & -1\\
0 & 0 & 1/(y(a - x))
\end{pmatrix},
\]
the left-hand side equals
\[
\eta_1\eta_2(y)\eta_2(a - x)f\begin{pmatrix}
1 & 0 & 0\\
a & 1 & 0\\
1/y & -1/(y(x - a)) & 1
\end{pmatrix}.
\]
Hence the lemma follows from the smoothness of $f$.
\end{proof}

\begin{lem}\label{lem:on s2s1 variant}
Let $x,y\in F$ such that $y + s \ne 0$, $x - t/(y + s)\ne 0$ for any $s,t\in \mathcal{O}_{F}$.
Then \begin{align*}
f\left(\begin{pmatrix}
1 & 0 & y\\
x & 1 & 0\\
0 & 0 & 1
\end{pmatrix}\dot s_{2}\dot s_{1}\right)
=
\int_{\mathcal{O}_{F}} \int_{\mathcal{O}_{F}} \eta_1\eta_2(y + s)\eta_2\left(\frac{t}{y + s} - x\right)
f\begin{pmatrix}
1 & 0 & 0\\
\frac{t}{y + s} & 1 & 0\\
0 & 0 & 1
\end{pmatrix}ds dt.
\end{align*}
\end{lem}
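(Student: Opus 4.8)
The plan is to use that $f$, as an element of $(\Ind_{B}^{G}\chi)^{\sm,N\cap K,Z_{L}^{+} = \chi}$, is a Hecke eigenvector, and to turn the left-hand side into an average to which Lemma~\ref{lem:s2s1-formula} applies. Write $n_{b,c} := \begin{pmatrix}1 & 0 & b\\ 0 & 1 & c\\ 0 & 0 & 1\end{pmatrix}$ and $g := \begin{pmatrix}1 & 0 & y\\ x & 1 & 0\\ 0 & 0 & 1\end{pmatrix}\dot{s}_{2}\dot{s}_{1}$, and for $k\ge 1$ set $z_{k} := \diag(1,1,\varpi_{F}^{-k})$. One checks easily that $z_{k}\in Z_{L}^{+}$ and that $z_{k}N_{0}z_{k}^{-1} = \{n_{b,c}\mid b,c\in\varpi_{F}^{k}\mathcal{O}_{F}\}$ has index $q^{2k}$ in $N_{0} = N\cap K$; since $\chi(z_{k}) = \chi_{3}(\varpi_{F})^{-k}$, the defining formula~\eqref{eq:hecke} for the Hecke action of $z_{k}$ gives
\[
f(g) = \chi_{3}(\varpi_{F})^{k}\, q^{-2k}\sum_{b,c\bmod\varpi_{F}^{k}} f\bigl(z_{k}^{-1}n_{b,c}^{-1}g\bigr),
\]
so everything reduces to evaluating the terms on the right by hand.

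The second step is a direct matrix computation. Using $n_{b,c}^{-1}g = \begin{pmatrix}1 & 0 & y-b\\ x & 1 & -c\\ 0 & 0 & 1\end{pmatrix}\dot{s}_{2}\dot{s}_{1}$ and $\dot{s}_{2}\dot{s}_{1} = \begin{pmatrix}0 & -1 & 0\\ 0 & 0 & -1\\ 1 & 0 & 0\end{pmatrix}$, one obtains the factorization
\[
z_{k}^{-1}n_{b,c}^{-1}g = \begin{pmatrix}1 & 0 & y'\\ x & 1 & a'y'\\ 0 & 0 & 1\end{pmatrix}\dot{s}_{2}\dot{s}_{1}\cdot\diag(\varpi_{F}^{k},1,1),\qquad y' := \frac{y-b}{\varpi_{F}^{k}},\quad a' := \frac{-c}{y-b},
\]
which is legitimate since ``$y+s\ne 0$ for all $s\in\mathcal{O}_{F}$'' forces $y-b\ne 0$. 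By $B$-equivariance the trailing $\diag(\varpi_{F}^{k},1,1)$ contributes the scalar $\chi_{1}(\varpi_{F})^{-k}$. The hypotheses also force $\val(y) < 0$ (take $s = -y$) and $\val(xy) < 0$ (take $s = 0$ and $t$ arbitrary), so $\val(y') = \val(y) - k$, while $a'$ lies in the fixed compact set $D := \varpi_{F}^{-\val(y)}\mathcal{O}_{F}\not\ni x$, and $a'\ne x$ uniformly in $b,c$ (this last is exactly the hypothesis ``$x - t/(y+s)\ne 0$''). Hence, for $k$ large enough that $\val(y')$ falls below the threshold of Lemma~\ref{lem:s2s1-formula} for this $D$, that lemma evaluates $f\left(\begin{pmatrix}1 & 0 & y'\\ x & 1 & a'y'\\ 0 & 0 & 1\end{pmatrix}\dot{s}_{2}\dot{s}_{1}\right)$ as $\eta_{1}\eta_{2}(y')\,\eta_{2}(a'-x)\,f\begin{pmatrix}1 & 0 & 0\\ a' & 1 & 0\\ 0 & 0 & 1\end{pmatrix}$. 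Since $\eta_{1}\eta_{2} = \chi_{1}^{-1}\chi_{3}$ we have $\eta_{1}\eta_{2}(y') = \eta_{1}\eta_{2}(y-b)\,\chi_{1}(\varpi_{F})^{k}\chi_{3}(\varpi_{F})^{-k}$, so combining the prefactor $\chi_{3}(\varpi_{F})^{k}$, the scalar $\chi_{1}(\varpi_{F})^{-k}$ and this identity, all powers of $k$ cancel and each summand becomes $\eta_{1}\eta_{2}(y-b)\,\eta_{2}\bigl(\tfrac{-c}{y-b}-x\bigr)\,f\begin{pmatrix}1 & 0 & 0\\ \frac{-c}{y-b} & 1 & 0\\ 0 & 0 & 1\end{pmatrix}$.

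It remains to observe that for $k$ sufficiently large the function $(b,c)\mapsto \eta_{1}\eta_{2}(y-b)\,\eta_{2}\bigl(\tfrac{-c}{y-b}-x\bigr)\,f\begin{pmatrix}1 & 0 & 0\\ \frac{-c}{y-b} & 1 & 0\\ 0 & 0 & 1\end{pmatrix}$ is constant on each coset of $\varpi_{F}^{k}\mathcal{O}_{F}\times\varpi_{F}^{k}\mathcal{O}_{F}$ in $\mathcal{O}_{F}\times\mathcal{O}_{F}$; this uses the finite conductors of $\eta_{1},\eta_{2}$ and local constancy of $f$, together with the uniform bounds $\val(y-b) = \val(y)$ and $\val\bigl(\tfrac{-c}{y-b}-x\bigr) = \val(x)$ (the latter holding because $\val(x) < -\val(y)\le\val\bigl(\tfrac{-c}{y-b}\bigr)$), so that a single $k$ works. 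Consequently $q^{-2k}\sum_{b,c\bmod\varpi_{F}^{k}}$ of that function equals $\int_{\mathcal{O}_{F}}\int_{\mathcal{O}_{F}}\,db\,dc$ of it, and substituting $s = -b$, $t = -c$ turns the integral into the one claimed.

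The main obstacle is the explicit factorization of the middle paragraph together with the bookkeeping showing that the two nonvanishing hypotheses on $(x,y)$ are exactly equivalent to $\val(y)<0$, $\val(xy)<0$ and $a'\ne x$ — this is precisely what makes Lemma~\ref{lem:s2s1-formula} applicable \emph{after} the Hecke ``zoom-in'' by $z_{k}$, whereas a direct application to the given $y$ fails because $\val(y)$ need not be very negative. Once this is in place, cancelling the powers $\chi_{i}(\varpi_{F})^{\pm k}$ and passing from the finite average to the integral are routine.
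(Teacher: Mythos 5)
Your proof is correct and is essentially the paper's argument: both use the Hecke eigenvector property for an element of $Z_L^+$ (your $\diag(1,1,\varpi_F^{-k})$ differs from the paper's $\diag(\varpi_F^{k},\varpi_F^{k},1)$ only by a central scalar of $G$) to write $f$ as an average of translates, factor the resulting matrices so that Lemma~\ref{lem:s2s1-formula} applies uniformly for large $k$, and cancel all powers of $\varpi_F$. The only presentational difference is that the paper works directly with the integral form~\eqref{eq:hecke} of the Hecke operator, whereas you convert the finite average to the integral by the local-constancy argument at the end.
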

\begin{proof}
Let $z := \diag(\varpi_{F}^{k},\varpi_{F}^{k},1)$.
As $\tau_{z}f = \chi(z)f$, we have
\begin{align*}
& f\left(\begin{pmatrix}
1 & 0 & y\\
x & 1 & 0\\
0 & 0 & 1
\end{pmatrix}
\dot s_{2}\dot s_{1}\right)
= \chi_{1}\chi_{2}(\varpi_{F}^{-k})(\tau_{z}f)\left(\begin{pmatrix}
1 & 0 & y\\
x & 1 & 0\\
0 & 0 & 1
\end{pmatrix}
\dot s_{2}\dot s_{1}\right)\\
& = \chi_{1}\chi_{2}(\varpi_{F}^{-k})
\int_{\mathcal{O}_{F}} \int_{\mathcal{O}_{F}} 
f\left(z^{-1}\begin{pmatrix}
1 & 0 & s\\
0 & 1 & t\\
0 & 0 & 1
\end{pmatrix}
\begin{pmatrix}
1 & 0 & y\\
x & 1 & 0\\
0 & 0 & 1
\end{pmatrix}
\dot s_{2}\dot s_{1}\right)ds dt\\
& = 
\chi_{1}\chi_{2}(\varpi_{F}^{-k})\chi_{2}\chi_{3}(\varpi_{F}^k)
\int_{\mathcal{O}_{F}} \int_{\mathcal{O}_{F}} f\left(\begin{pmatrix}
1 & 0 & \varpi_{F}^{-k}(y + s)\\
x & 1 & \varpi_{F}^{-k}t\\
0 & 0 & 1
\end{pmatrix}\dot s_{2}\dot s_{1}\right) ds dt.
\end{align*}
By Lemma~\ref{lem:s2s1-formula}, if $k$ is sufficiently large, this is equal to
\begin{align*}
& \eta_1\eta_2(\varpi_{F}^k)\int_{\mathcal{O}_{F}} \int_{\mathcal{O}_{F}} \eta_1\eta_2(\varpi_{F}^{-k}(y + s))\eta_2\left(\frac{t}{y + s} - x\right)
f\begin{pmatrix}
1 & 0 & 0\\
\frac{t}{y + s} & 1 & 0\\
0 & 0 & 1
\end{pmatrix}ds dt\\
& = \int_{\mathcal{O}_{F}} \int_{\mathcal{O}_{F}} \eta_1\eta_2(y + s)\eta_2\left(\frac{t}{y + s} - x\right)
f\begin{pmatrix}
1 & 0 & 0\\
\frac{t}{y + s} & 1 & 0\\
0 & 0 & 1
\end{pmatrix} ds dt.
\end{align*}
We get the lemma.
\end{proof}

  We prove the first formula of Theorem~\ref{thm:explicit}.

Let $b,c\in (\varpi_{F})$ and assume that $bc \ne 0$.
By Lemma~\ref{lem:on s2s1 variant} we have \begin{align*}
f\begin{pmatrix}1 & 0 & 0\\
0 & 1 & 0\\
b & c & 1
\end{pmatrix}
& = 
f\left(\begin{pmatrix}
1 & 0 & 1/b\\
-b/c & 1 & 0\\
0 & 0 & 1
\end{pmatrix}
\dot s_{2}\dot s_{1}
\begin{pmatrix}
b & c & 1\\
0 & c/b & 1/b\\
0 & 0 & 1/c
\end{pmatrix}\right)\\
& = \eta_1(b)\eta_2(c)\int_{\mathcal{O}_{F}} \int_{\mathcal{O}_{F}} \eta_1\eta_2\left(\frac{1}{b} + s\right)\eta_2\left(\frac{t}{1/b + s}  + \frac{b}{c} \right)
f\begin{pmatrix}
1 & 0 & 0 \\
\frac{t}{1/b + s} & 1 & 0\\
0 & 0 & 1
\end{pmatrix}ds dt\\
& = \int_{\mathcal{O}_{F}} \int_{\mathcal{O}_{F}} \eta_1\eta_2\left(1 + bs\right)\eta_2\left(\frac cb\right)\eta_2\left(\frac{tb}{1 + sb} + \frac{b}{c}\right)
f\begin{pmatrix}
1 & 0 & 0 \\
\frac{tb}{1 + sb} & 1 & 0\\
0 & 0 & 1
\end{pmatrix}ds dt.\\
\noalign{\noindent Writing $\frac{tb}{1 + sb} = bt'$ (i.e.\ $t' = \frac{t}{1 + sb}$ and $dt' = dt$) we get}
& = \int_{\mathcal{O}_{F}} \int_{\mathcal{O}_{F}} \eta_1\eta_2\left(1 + bs\right)\eta_2\left(\frac cb\right)\eta_2\left(bt'+\frac bc\right)
f\begin{pmatrix}
1 & 0 & 0 \\
bt' & 1 & 0\\
0 & 0 & 1
\end{pmatrix}ds dt'\\
& = \int_{\mathcal{O}_{F}} \int_{\mathcal{O}_{F}} \eta_1\eta_2\left(1 + bs\right)\eta_2(1+ct)
f\begin{pmatrix}
1 & 0 & 0 \\
bt & 1 & 0\\
0 & 0 & 1
\end{pmatrix}ds dt\\
& = \left(\int_{\mathcal{O}_{F}} \eta_1\eta_2(1 + bs) ds\right) \left(\int_{\mathcal{O}_{F}} \eta_2(1 + ct)
f\begin{pmatrix}
1 & 0 & 0 \\
bt & 1 & 0\\
0 & 0 & 1
\end{pmatrix}dt\right).
\end{align*}
As $\int_{\mathcal{O}_{F}} \eta_1\eta_2(1 + bs) ds = 1$ if $\eta_1\eta_2$ is trivial on $1+b\mathcal{O}_{F}$ and zero otherwise, we get the first formula of Theorem~\ref{thm:explicit} when $bc \ne 0$.
By local constancy of $f$ we can see that it also holds when $bc = 0$.

We prove the third formula in Theorem~\ref{thm:explicit}.
Note that our formula does not depend on $\varpi_{F}$.
We take $\varpi_{F}$ such that $\eta_{2}(\varpi_{F}) \ne q$, which is possible as $\eta_{2} \ne \lvert \cdot\rvert_F^{-1}$.
Let $c\in (\varpi_{F})$.
For $z = \diag(\varpi_{F}^{k},\varpi_{F}^{k},1)$ with $k \ge 0$, we have
\begin{align*}
f\left(\begin{pmatrix}
1 & 0 & 0\\
0 & 1 & 0\\
c & 0 & 1
\end{pmatrix}
\dot s_{2}
\right)
&=
\chi(z)^{-1} (\tau_{z}f)\left(\begin{pmatrix}
1 & 0 & 0\\
0 & 1 & 0\\
c & 0 & 1
\end{pmatrix}
\dot s_{2}
\right)\\
&=
\chi_{2}^{-1}\chi_{3}(\varpi_{F}^{k})\int_{\mathcal{O}_{F}} \int_{\mathcal{O}_{F}} 
f\left(\begin{pmatrix}
1 + cv & 0 & \varpi_{F}^{-k}v\\
cw & 1 & \varpi_{F}^{-k}w\\
\varpi_{F}^{k}c & 0 & 1
\end{pmatrix}\dot s_{2}\right)dv dw.
\end{align*}
We have
\[
\begin{pmatrix}
1 + cv & 0 & \varpi_{F}^{-k}v\\
cw & 1 & \varpi_{F}^{-k}w\\
\varpi_{F}^{k}c & 0 & 1
\end{pmatrix}\dot s_{2}
=
\begin{pmatrix}
1 & 0 & 0\\
\frac{cw}{1 + cv} & 1 & \varpi_{F}^{-k}w\\
\frac{\varpi_{F}^{k}c}{1 + cv} & 0 & 1\\
\end{pmatrix}
\dot s_{2}
\begin{pmatrix}
1 + cv & \varpi_{F}^{-k}v & 0\\
0 & 1/(1 + cv) & 0\\
0 & 0 & 1
\end{pmatrix}.
\]
Hence
\begin{equation}\label{eq:second case(1) variant}
\begin{split}
f\left(\begin{pmatrix}
1 & 0 & 0\\
0 & 1 & 0\\
c & 0 & 1
\end{pmatrix}
\dot s_{2}
\right)
& =
\eta_2(\varpi_{F}^{k})
\int_{\mathcal{O}_{F}} \int_{\mathcal{O}_{F}} \eta_1(1 + cv)
f\left(
\begin{pmatrix}
1 & 0 & 0\\
\frac{cw}{1 + cv} & 1 & \varpi_{F}^{-k}w\\
\frac{\varpi_{F}^{k}c}{1 + cv} & 0 & 1
\end{pmatrix}\dot s_{2}
\right)dv dw.
\end{split}
\end{equation}
Note that, by construction, the integrand in \eqref{eq:second case(1) variant} only depends on $v$ and $w$ modulo $\varpi_F^k$.

The $w \equiv 0$ part of \eqref{eq:second case(1) variant} equals, for $k$ sufficiently large (by smoothness),
\begin{align*}
\frac{\eta_2(\varpi_{F}^{k})}{q^{k}}
\int_{\mathcal{O}_{F}} \eta_1(1 + cv)
f\left(
\begin{pmatrix}
1 & 0 & 0\\
0 & 1 & 0\\
\frac{\varpi_{F}^{k}c}{1 + cv} & 0 & 1
\end{pmatrix}\dot s_{2}
\right) dv
&=\frac{\eta_2(\varpi_{F}^{k})}{q^{k}}
\int_{\mathcal{O}_{F}} \eta_1(1 + cv)
f\left(
\begin{pmatrix}
1 & 0 & 0\\
0 & 1 & 0\\
0 & 0 & 1
\end{pmatrix}\dot s_{2}
\right) dv\\
&=
\delta_{\val(c) \ge c(\eta_1)}\frac{\eta_2(\varpi_{F}^{k})}{q^{k}}
f\left(
\begin{pmatrix}
1 & 0 & 0\\
0 & 1 & 0\\
0 & 0 & 1
\end{pmatrix}\dot s_{2}
\right).
\end{align*}
We calculate the $w\not\equiv 0$ part. For such $w$ we have
\[
\begin{pmatrix}
1 & 0 & 0\\
\frac{cw}{1 + cv} & 1 & \varpi_{F}^{-k}w\\
\frac{\varpi_{F}^{k}c}{1 + cv} & 0 & 1
\end{pmatrix}\dot s_{2}
=
\begin{pmatrix}
1 & 0 & 0\\
\frac{cw}{1 + cv} & 1 & 0\\
\frac{\varpi_{F}^{k}c}{1 + cv} & \varpi_{F}^{k}/w & 1
\end{pmatrix}
\begin{pmatrix}
1 & 0 & 0\\
0 & \varpi_{F}^{-k}w & -1\\
0 & 0 & \varpi_{F}^{k}/w
\end{pmatrix}.
\]
Note that $cw/(1 + cv) \in \mathcal{O}_{F}$, $\varpi_{F}^{k}c/(1 + cv),\varpi_{F}^{k}/w\in (\varpi_{F})$.
Hence by the first formula of Theorem~\ref{thm:explicit} for $k$ sufficiently large, we get
\[
f\left(\begin{pmatrix}
1 & 0 & 0\\
\frac{cw}{1 + cv} & 1 & \varpi_{F}^{-k}w\\
\frac{\varpi_{F}^{k}c}{1 + cv} & 0 & 1
\end{pmatrix}\dot s_{2}\right)
=
\eta_2(\varpi_{F}^{-k}w)
\int_{\mathcal{O}_{F}} \eta_2\left(1+\frac{\varpi_{F}^k}w t\right)
f\begin{pmatrix}
1 & 0 & 0\\
\frac{c(w + \varpi_{F}^{k}t)}{1 + cv} & 1 & 0\\
0 & 0 & 1
\end{pmatrix} dt.
\]
Hence the $w\not\equiv 0$ part of \eqref{eq:second case(1) variant} equals
\begin{align*}
&
\frac{1}{q^{k}}\sum_{w\in (\mathcal{O}_{F}/(\varpi_{F}^{k}))\setminus\{0\}}
\int_{\mathcal{O}_{F}} \eta_1(1+cv)
\int_{\mathcal{O}_{F}} \eta_2(w+\varpi_{F}^k t)
f\begin{pmatrix}
1 & 0 & 0\\
\frac{c(w + \varpi_{F}^{k}t)}{1 + cv} & 1 & 0\\
0 & 0 & 1
\end{pmatrix}dt dv\\
& =
\frac{1}{q^{k}}\sum_{w\in (\mathcal{O}_{F}/(\varpi_{F}^{k}))\setminus\{0\}}
\int_{\mathcal{O}_{F}} \eta_1(1+cv)
\int_{w+(\varpi_{F}^k)} \eta_2(1+cv)\eta_2(t')
f\begin{pmatrix}
1 & 0 & 0\\
ct' & 1 & 0\\
0 & 0 & 1
\end{pmatrix}\frac{dt'}{|\varpi_{F}^k|_{F}} dv,\\
\noalign{\noindent where $t' := \frac{w+\varpi_{F}^k t}{1+cv}$, hence}
& =
\left(\int_{\mathcal{O}_{F}} \eta_1\eta_2(1+cv) dv\right)
\left(\int_{\mathcal{O}_{F}\setminus (\varpi_{F}^k)} \eta_2(t')
f\begin{pmatrix}
1 & 0 & 0\\
ct' & 1 & 0\\
0 & 0 & 1
\end{pmatrix}dt'\right)\\
& = 
\delta_{\val(c) \ge c(\eta_1\eta_2)}
\left(\int_{\mathcal{O}_{F}\setminus (\varpi_{F}^k)} \eta_2(t')
f\begin{pmatrix}
1 & 0 & 0\\
ct' & 1 & 0\\
0 & 0 & 1
\end{pmatrix}dt'\right)\\
& = 
\delta_{\val(c) \ge c(\eta_1\eta_2)}
\left(\int_{\mathcal{O}_{F}} \eta_2(t)
\Bigg[f\begin{pmatrix}
1 & 0 & 0\\
ct & 1 & 0\\
0 & 0 & 1
\end{pmatrix}-f(1)\Bigg]dt + \left(\int_{\mathcal{O}_{F}\setminus (\varpi_{F}^k)} \eta_2(t)dt\right) 
f(1)\right).
\end{align*}

In particular, when $c = 0$ equation \eqref{eq:second case(1) variant} gives
\begin{equation*}
  f(\dot{s}_{2})
 \left(1-\frac{\eta_2(\varpi_{F})^{k}}{q^{k}}\right)
= \left(\int_{\mathcal{O}_{F}\setminus (\varpi_{F}^k)} \eta_2(t)dt\right) f(1).
\end{equation*}

\begin{lem}\label{lem:int of chi(s)}
We have
\[
\int_{\mathcal{O}_{F}\setminus (\varpi_{F}^k)} \eta_2(t)dt = \delta_{c(\eta_2)=0} \cdot \frac{q - 1}{q - \eta_{2}(\varpi_{F})}\left(1-\frac{\eta_2(\varpi_{F})^{k}}{q^{k}}\right).
\]
\end{lem}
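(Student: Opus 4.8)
The plan is to compute the integral directly by slicing $\mathcal{O}_{F}\setminus(\varpi_{F}^{k})$ into valuation shells. First I would write $\mathcal{O}_{F}\setminus(\varpi_{F}^{k}) = \coprod_{j=0}^{k-1}\varpi_{F}^{j}\mathcal{O}_{F}^{\times}$, so that the left-hand side becomes $\sum_{j=0}^{k-1}\int_{\varpi_{F}^{j}\mathcal{O}_{F}^{\times}}\eta_{2}(t)\,dt$. On the $j$-th shell I substitute $t = \varpi_{F}^{j}u$ with $u\in\mathcal{O}_{F}^{\times}$; since $\eta_{2}$ is multiplicative and, with the normalization $\mathrm{vol}(\mathcal{O}_{F}) = 1$, the Haar measure transforms by $d(\varpi_{F}^{j}u) = \lvert\varpi_{F}^{j}\rvert_{F}\,du = q^{-j}\,du$, this shell contributes $\eta_{2}(\varpi_{F})^{j}q^{-j}\int_{\mathcal{O}_{F}^{\times}}\eta_{2}(u)\,du$.

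Next I would evaluate $I := \int_{\mathcal{O}_{F}^{\times}}\eta_{2}(u)\,du$ by orthogonality of characters on the compact group $\mathcal{O}_{F}^{\times}$: if $\eta_{2}\vert_{\mathcal{O}_{F}^{\times}}$ is non-trivial, i.e.\ $c(\eta_{2})\ge 1$, then $I = 0$; if $\eta_{2}\vert_{\mathcal{O}_{F}^{\times}}$ is trivial, i.e.\ $c(\eta_{2}) = 0$, then $I = \mathrm{vol}(\mathcal{O}_{F}^{\times}) = 1 - q^{-1} = (q-1)/q$. In the ramified case every shell contributes $0$, so the whole integral vanishes, which matches the factor $\delta_{c(\eta_{2})=0}$ on the right-hand side; it remains to treat the unramified case.

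Assuming $c(\eta_{2}) = 0$, the integral equals $\frac{q-1}{q}\sum_{j=0}^{k-1}\bigl(\eta_{2}(\varpi_{F})/q\bigr)^{j}$. Here I use $\eta_{2}(\varpi_{F})\ne q$ — which holds because we have chosen $\varpi_{F}$ with $\eta_{2}(\varpi_{F})\ne q$, equivalently because $\eta_{2}$ is unramified and $\eta_{2}\ne\lvert\cdot\rvert_{F}^{-1}$ — to sum the finite geometric series as $\frac{1 - (\eta_{2}(\varpi_{F})/q)^{k}}{1 - \eta_{2}(\varpi_{F})/q} = \frac{q}{q-\eta_{2}(\varpi_{F})}\bigl(1 - \eta_{2}(\varpi_{F})^{k}/q^{k}\bigr)$, and multiplying by $(q-1)/q$ yields exactly the claimed expression. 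There is no genuine obstacle here: the computation is routine, and the only points needing a moment's care are keeping the Haar-measure factor $q^{-j}$ correct on each shell and recording that both sides are manifestly independent of the choice of $\varpi_{F}$, so the identity holds for any uniformizer with $\eta_{2}(\varpi_{F})\ne q$.
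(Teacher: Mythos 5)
Your proof is correct and follows essentially the same route as the paper's: decompose $\mathcal{O}_{F}\setminus(\varpi_{F}^{k})$ into valuation shells $\varpi_{F}^{j}\mathcal{O}_{F}^{\times}$, use the vanishing (resp.\ the value $(q-1)/q$) of $\int_{\mathcal{O}_{F}^{\times}}\eta_{2}$ according to whether $\eta_{2}$ is ramified, and sum the geometric series using $\eta_{2}(\varpi_{F})\ne q$. Nothing is missing.
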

\begin{proof}
Let $n$ be the valuation of $t$.
Then 
\begin{align*}
\int_{\mathcal{O}_{F}\setminus (\varpi_{F}^k)} \eta_2(t)dt 
& = \sum_{n=0}^{k-1} \int_{\varpi_{F}^n \mathcal{O}_{F}^{\times}} \eta_2(t)dt
  = \sum_{n=0}^{k-1} \left(\frac{\eta_2(\varpi_{F})}{q}\right)^n \int_{\mathcal{O}_{F}^{\times}} \eta_2(s)ds,\\
\noalign{\noindent where $t := \varpi_{F}^n s$ and $dt = |\varpi_{F}^n|_{F} ds$, so}
& = \delta_{c(\eta_2)=0} \cdot \frac{q-1}q \cdot \frac{1 - (q^{-1}\eta_2(\varpi_{F}))^{k}}{1 - q^{-1}\eta_2(\varpi_{F})},
\end{align*}
where we used our assumption that $\eta_2(\varpi_{F})\ne q$.
\end{proof}
By our assumption that $\eta_2(\varpi_{F})\ne q$ we can choose our sufficiently large $k$ such
that $\eta_2(\varpi_{F}^k)\ne q^{k}$ (equality cannot hold for two consecutive values of $k$). Hence we get
\begin{equation}\label{eq:2}
f\left(\dot{s}_{2}
\right)
=
\delta_{c(\eta_2)=0} \frac{q - 1}{q - \eta_2(\varpi_{F})}
f(1).
\end{equation}

In general, substituting \eqref{eq:2} into the $w \equiv 0$ part we get from our analysis of \eqref{eq:second case(1) variant} and Lemma~\ref{lem:int of chi(s)} that
\begin{align*}
f\left(\begin{pmatrix}
1 & 0 & 0\\
0 & 1 & 0\\
c & 0 & 1
\end{pmatrix}
\dot s_{2}
\right)
&=
\delta_{\val(c) \ge c(\eta_1)}
\delta_{c(\eta_2)=0} \frac{\eta_2(\varpi_{F})^{k}}{q^{k}} \frac{q - 1}{q - \eta_2(\varpi_{F})}
f(1)
\\
&\quad +\delta_{\val(c) \ge c(\eta_1\eta_2)}
\Bigg(\int_{\mathcal{O}_{F}} \eta_2(t)
\Bigg[f\begin{pmatrix}
1 & 0 & 0\\
ct & 1 & 0\\
0 & 0 & 1
\end{pmatrix}-f(1)\Bigg]dt\\
&\quad + \delta_{c(\eta_2)=0} \cdot \frac{q - 1}{q - \eta_{2}(\varpi_{F})}\left(1-\frac{\eta_2(\varpi_{F})^{k}}{q^{k}}\right) f(1)\Bigg)\\
&=
\delta_{\val(c) \ge c(\eta_1\eta_2)} \Bigg(
\int_{\mathcal{O}_{F}} \eta_2(t) \left[f\begin{pmatrix}1 & 0 & 0\\ ct & 1 & 0\\ 0 & 0 & 1\end{pmatrix} - f(1)\right] dt \\
& \quad + \delta_{c(\eta_2) = 0}\cdot \frac{q - 1}{q - \eta_{2}(\varpi_{F})}  f(1)\Bigg).
\end{align*}
Here we used $\delta_{\val(c)\ge c(\eta_{1})}\delta_{c(\eta_{2}) = 0} = \delta_{\val(c)\ge c(\eta_{1}\eta_{2})}\delta_{c(\eta_{2}) = 0}$ in the last line.
We get the third formula of Theorem~\ref{thm:explicit}.

Finally we prove the fifth formula of Theorem~\ref{thm:explicit}.
Note that our formula does not depend on $\varpi_{F}$.
We take our uniformizer $\varpi_{F}$ such that $\eta_{1}\eta_{2}(\varpi_{F})\ne q^{2}$, which is possible as $\eta_{1}\eta_{2} \ne \lvert \cdot\rvert_F^{-2}$.
For $z_{1} = \diag(\varpi_{F},\varpi_{F},1)$, we have as in the proof of Lemma~\ref{lem:on s2s1 variant},
\begin{align*}
f\left(\dot s_{2}\dot s_{1}\right)
& =
\chi(z_{1})^{-1}(\tau_{z_{1}}f)\left(\dot s_{2}\dot s_{1}\right)\\
& = \eta_1\eta_2(\varpi_{F})
\int_{\mathcal{O}_{F}} \int_{\mathcal{O}_{F}} 
f\left(\begin{pmatrix}1 & 0 & \varpi_{F}^{-1}v\\ 0 & 1 & \varpi_{F}^{-1}w \\ 0 & 0 & 1\end{pmatrix}\dot s_{2}\dot s_{1}\right) dv dw.
\end{align*}

We calculate the right-hand side, recalling that the integrand only depends on $v$, $w$ modulo $\varpi_F$.
\begin{enumerate}
\item $v, w \equiv 0 \pmod{\varpi_F}$.
We have
\[
\frac{\eta_1\eta_2(\varpi_{F})}{q^{2}}
f\left(\dot s_{2}\dot s_{1}\right).
\]
\item $v \not\equiv 0$, $w \equiv 0 \pmod{\varpi_F}$.
We have
\[
\begin{pmatrix}
1 & 0 & \varpi_{F}^{-1}v\\
0 & 1 & 0\\
0 & 0 & 1
\end{pmatrix}\dot s_{2}\dot s_{1}
=
\begin{pmatrix}
1 & 0 & 0\\
0 & 1 & 0\\
\varpi_{F} v^{-1} & 0 & 1
\end{pmatrix}\dot s_{2}
\begin{pmatrix}
\varpi_{F}^{-1}v & -1 & 0\\
0 & \varpi_{F} v^{-1} & 0\\
0 & 0 & 1
\end{pmatrix}.
\]
Hence, by using the third formula of Theorem~\ref{thm:explicit}, the right-hand side equals
\begin{align*}
& \frac{\eta_1\eta_2(\varpi_{F})\eta_1(\varpi_{F}^{-1})}{q}
\int_{\mathcal{O}_{F}^\times}
\eta_1(v)
f\left(
\begin{pmatrix}
1 & 0 & 0\\
0 & 1 & 0\\
\varpi_{F} v^{-1} & 0 & 1
\end{pmatrix}\dot s_{2}
\right) dv\\
& = 
\delta_{1 \ge c(\eta_1\eta_2)} \frac{\eta_2(\varpi_{F})}{q}
\int_{\mathcal{O}_{F}^\times}
\eta_1(v)
\Bigg(
\int_{\mathcal{O}_{F}} \eta_2(t) \left[f\begin{pmatrix}1 & 0 & 0\\ \varpi_{F} v^{-1} t & 1 & 0\\ 0 & 0 & 1\end{pmatrix} - f(1)\right] dt \\
& \hspace{6cm} + \delta_{c(\eta_2) = 0}\cdot \frac{q - 1}{q - \eta_{2}(\varpi_{F})}  f(1)\Bigg) dv.
\end{align*}
Hence, by letting $t' := \varpi_{F} v^{-1} t$ (so $dt' = |\varpi_{F}|_{F}dt$) and noting that $\eta_2(v) = 1$ if the final term contributes, we get
\begin{align*}
&\frac{\delta_{1 \ge c(\eta_1\eta_2)}}{q} \left(\int_{\mathcal{O}_{F}^\times} \eta_1\eta_2(v)dv\right) 
\Bigg(
\int_{(\varpi_{F})} \eta_2(t') \left[f\begin{pmatrix}1 & 0 & 0\\ t' & 1 & 0\\ 0 & 0 & 1\end{pmatrix} - f(1)\right] \frac{dt'}{|\varpi_{F}|_{F}} \\
& \hspace{6cm} + \delta_{c(\eta_2) = 0}\cdot \frac{q - 1}{q - \eta_{2}(\varpi_{F})} \eta_2(\varpi_{F}) f(1)\Bigg),\\
& = \delta_{c(\eta_1\eta_2)=0} \frac{q-1}{q}
\Bigg(
\int_{(\varpi_{F})} \eta_2(t') \left[f\begin{pmatrix}1 & 0 & 0\\ t' & 1 & 0\\ 0 & 0 & 1\end{pmatrix} - f(1)\right] dt' \\
& \hspace{6cm} + \delta_{c(\eta_2) = 0}\cdot \frac{q - 1}{q - \eta_{2}(\varpi_{F})} \frac{\eta_2(\varpi_{F})}{q} f(1)\Bigg).
\end{align*}
\item $v \equiv 0,w\not\equiv 0 \pmod{\varpi_F}$.
We have
\[
\begin{pmatrix}
1 & 0 & 0\\
0 & 1 & \varpi_{F}^{-1}w\\
0 & 0 & 1
\end{pmatrix}\dot s_{2}\dot s_{1}
=
\begin{pmatrix}
1 & 0 & 0\\
0 & 1 & 0\\
0 & \varpi_{F} w^{-1} & 1
\end{pmatrix}\dot s_{1}
\begin{pmatrix}
\varpi_{F}^{-1}w & 0 & -1\\
0 & 1 & 0\\
0 & 0 &  \varpi_{F} w^{-1}
\end{pmatrix}.
\]
Hence the right-hand side equals, by using the second formula of Theorem~\ref{thm:explicit},
\begin{align*}
\frac{\eta_1\eta_2(\varpi_{F})\eta_1\eta_2(\varpi_{F}^{-1})}{q}
& \int_{\mathcal{O}_{F}^\times} \eta_1\eta_2(w)f\left(
\begin{pmatrix}
1 & 0 & 0\\
0 & 1 & 0\\
0 & \varpi_{F} w^{-1} & 1
\end{pmatrix}\dot s_{1}\right) dw\\
& = 
  \frac{\delta_{1 \ge c(\eta_1\eta_2)}}{q}   \int_{\mathcal{O}_{F}^\times} \eta_1\eta_2(w)
  \displaystyle\int_{\mathcal{O}_{F}} f\left(\begin{pmatrix}1 & \varpi_{F} w^{-1}t & 0\\ 0 & 1 & 0\\ 0 & 0 & 1\end{pmatrix}\dot s_{1}\right)dt dw.
\end{align*}
By letting $t' := \varpi_{F} w^{-1}t$ (so $dt' = |\varpi_{F}|_{F}dt$), we get
\[
  \delta_{c(\eta_1\eta_2)=0}\frac{q-1}{q} 
  \displaystyle\int_{(\varpi_{F})} f\left(\begin{pmatrix}1 & t' & 0\\ 0 & 1 & 0\\ 0 & 0 & 1\end{pmatrix}\dot s_{1}\right)dt'.
\]
\item $v\not\equiv 0$, $w\not\equiv 0\pmod{\varpi_F}$.
We have
\[
\begin{pmatrix}
1 & 0 & \varpi_{F}^{-1}v\\
0 & 1 & \varpi_{F}^{-1}w\\
0 & 0 & 1
\end{pmatrix}\dot s_{2}\dot s_{1}
=
\begin{pmatrix}
1 & 0 & 0\\
v^{-1}w & 1 & 0\\
v^{-1}\varpi_{F} & \varpi_{F} w^{-1} & 1
\end{pmatrix}
\begin{pmatrix}
\varpi_{F}^{-1}v & -1 & 0\\
0 & wv^{-1} & -1\\
0 & 0 & \varpi_{F} w^{-1}
\end{pmatrix}.
\]
Hence we get
\begin{align*}
\frac{\eta_1\eta_2(\varpi_{F})}{q}\int_{\mathcal{O}_{F}^\times} 
\sum_{w\in (\mathcal{O}_{F}/(\varpi_{F}))\setminus\{0\}}
\eta_1\eta_2(\varpi_{F}^{-1})\eta_1(v)\eta_2(w)
f\begin{pmatrix}
1 & 0 & 0\\
v^{-1}w & 1 & 0\\
v^{-1}\varpi_{F} & \varpi_{F}w^{-1} & 1
\end{pmatrix}dv.
\end{align*}
Therefore, by applying the first formula of Theorem~\ref{thm:explicit}, we get
\[
\frac{\delta_{1 \ge c(\eta_1\eta_2)}}{q}\int_{\mathcal{O}_{F}^\times} 
\sum_{w\in (\mathcal{O}_{F}/(\varpi_{F}))\setminus\{0\}}
\eta_1(v)\eta_2(w)
\displaystyle\int_{\mathcal{O}_{F}} \eta_2(1+\varpi_{F}w^{-1}t) f\begin{pmatrix}1 & 0 & 0\\ v^{-1}(w + \varpi_{F} t) & 1 & 0\\ 0 & 0 & 1\end{pmatrix}dt dv.
\]
By letting $t' := v^{-1}(w+\varpi_{F} t)$ (so $dt' = |\varpi_{F}|_{F}dt$ and $vt' = w+\varpi_{F} t$), we get
\begin{align*}
\frac{\delta_{1 \ge c(\eta_1\eta_2)}}{q} &\int_{\mathcal{O}_{F}^\times} 
\sum_{w\in (\mathcal{O}_{F}/(\varpi_{F}))\setminus\{0\}}
\eta_1(v)
\displaystyle\int_{\mathcal{O}_{F}} \eta_2(w+\varpi_{F} t) f\begin{pmatrix}1 & 0 & 0\\ v^{-1}(w + \varpi_{F} t) & 1 & 0\\ 0 & 0 & 1\end{pmatrix}dt dv\\
&=\frac{\delta_{1 \ge c(\eta_1\eta_2)}}{q}\int_{\mathcal{O}_{F}^\times} 
\eta_1\eta_2(v) \sum_{w\in (\mathcal{O}_{F}/(\varpi_{F}))\setminus\{0\}}
\displaystyle\int_{v^{-1}w + (\varpi_{F})} \eta_2(t') f\begin{pmatrix}1 & 0 & 0\\ t' & 1 & 0\\ 0 & 0 & 1\end{pmatrix}\frac{dt'}{|\varpi_{F}|_{F}} dv\\  
&=\delta_{c(\eta_1\eta_2)=0}\frac{q-1}{q}
\displaystyle\int_{\mathcal{O}_{F}^{\times}} \eta_2(t') f\begin{pmatrix}1 & 0 & 0\\ t' & 1 & 0\\ 0 & 0 & 1\end{pmatrix}dt'.
\end{align*}
\end{enumerate}
Therefore,
\begin{align*}
\left(1 - \frac{\eta_1\eta_2(\varpi_{F})}{q^{2}}\right)
f\left(\dot s_{2}\dot s_{1}\right)
& = \frac{\delta_{c(\eta_1\eta_2)=0} (q-1)}{q}
\Bigg(
\int_{(\varpi_{F})} \eta_2(t') \left[f\begin{pmatrix}1 & 0 & 0\\ t' & 1 & 0\\ 0 & 0 & 1\end{pmatrix} - f(1)\right] dt' \\
& \hspace{4cm} + \delta_{c(\eta_2) = 0}\cdot \frac{q - 1}{q - \eta_{2}(\varpi_{F})} \frac{\eta_2(\varpi_{F})}q f(1)\Bigg)\\
& \quad + \frac{\delta_{c(\eta_1\eta_2)=0}(q-1)}{q} 
  \displaystyle\int_{(\varpi_{F})} f\left(\begin{pmatrix}1 & t' & 0\\ 0 & 1 & 0\\ 0 & 0 & 1\end{pmatrix}\dot s_{1}\right)dt'\\
& \quad + \frac{\delta_{c(\eta_1\eta_2)=0}(q-1)}{q}
\displaystyle\int_{\mathcal{O}_{F}^{\times}} \eta_2(t') f\begin{pmatrix}1 & 0 & 0\\ t' & 1 & 0\\ 0 & 0 & 1\end{pmatrix}dt'\\
& = \frac{\delta_{c(\eta_1\eta_2)=0} (q-1)}{q}
\Bigg(
\int_{\mathcal{O}_{F}} \eta_2(t') \left[f\begin{pmatrix}1 & 0 & 0\\ t' & 1 & 0\\ 0 & 0 & 1\end{pmatrix} - f(1)\right] dt' \\
& \hspace{4cm} + \delta_{c(\eta_2) = 0}\cdot \frac{q - 1}{q - \eta_{2}(\varpi_{F})} f(1)\Bigg)\\
& \quad + \frac{\delta_{c(\eta_1\eta_2)=0}(q-1)}{q} 
  \displaystyle\int_{(\varpi_{F})} f\left(\begin{pmatrix}1 & t' & 0\\ 0 & 1 & 0\\ 0 & 0 & 1\end{pmatrix}\dot s_{1}\right)dt',
\end{align*}
where we used Lemma~\ref{lem:int of chi(s)} (with $k = 1$) to combine the first and third terms. 
We obtain the fifth formula and hence conclude the proof of Theorem~\ref{thm:explicit}.

\subsection{Density argument}
\label{sec:density-arg}
\emph{We continue to assume that $\eta_{2}\ne \lvert\cdot\rvert_{F}^{-1}$ and $\eta_{1}\eta_{2}\ne\lvert\cdot\rvert_{F}^{-2}$.} For $n\ge 1$, $\gamma\in C^{\times}$ and a smooth function $g\colon F\to C$ that vanishes outside $\mathcal O_F$, we define $f_{n}\in (\Ind_{B}^{G}\chi)^{\sm,N\cap K,Z_{L}^{+} = \chi}$ as follows:
\begin{alignat*}{2}
f_{n}\begin{pmatrix}1 & 0 & 0\\ a & 1 & 0\\ 0 & 0 & 1\end{pmatrix} &= 0&&\qquad (a\in (\varpi_{F})),\\
f_{n}\left(\begin{pmatrix}1 & a & 0\\ 0 & 1 & 0\\ 0 & 0 & 1\end{pmatrix}\dot{s}_{1}\right) &= 
\gamma^{n}g\left(\frac{a}{\varpi_{F}^{n}}\right)&&\qquad (a\in \mathcal O_{F}).
\end{alignat*}
Here we use that the restriction map~\eqref{eq:jacquet module} is an isomorphism.
Let $\pi_{0}(g,\gamma)$ be the smallest closed subrepresentation of $(\Ind_{B}^{G}\chi)^{\cts}$ that contains all $f_{n}$ ($n \ge 1$).
(The functions $f_n$ depend on $g$ and $\gamma$, but we suppress this to keep the notation succinct.)

\begin{lem}\label{lem:f_n span dense subspace}
Assume that $\lvert \eta_{2}(\varpi_{F})\rvert > \lvert q\rvert$ and $\eta_2 \ne 1$ and $\eta_{1}\eta_{2}\ne\lvert\cdot\rvert_{F}^{-2}$.
There exist $g$ and $\gamma$ such that $\pi_{0}(g,\gamma)=(\Ind_{B}^{G}\chi)^{\cts}$ and $\int_{\mathcal{O}_{F}}g(t)dt = 0$.
\end{lem}
To prove the lemma, we calculate $f_{n}$ on each Iwahori orbit $I\dot{w}B$ ($w \in S_3$) with the help of Theorem~\ref{thm:explicit}.
(Recall that the Iwahori subgroup $I$ and the representatives $\dot w$ were defined in subsection~\ref{sec:expl-formula}.)

\noindent ($w = e$)
We have for $a,b,c\in (\varpi_{F})$:
\[
f_{n}\begin{pmatrix}1 & 0 & 0\\ a & 1 & 0 \\ b & c & 1\end{pmatrix}
 =
  \delta_{\val(b) \ge c(\eta_1\eta_2)}\displaystyle\int_{\mathcal{O}_{F}} \eta_2(1+ct) f_{n}\begin{pmatrix}1 & 0 & 0\\ a + bt & 1 & 0\\ 0 & 0 & 1\end{pmatrix}dt  = 0.
\]

\noindent ($w = s_{1}$)
We have for $a\in \mathcal{O}_{F}$ and $b,c\in (\varpi_{F})$:
\begin{align*}
f_{n}\left(\begin{pmatrix}1 & a & 0\\ 0 & 1 & 0 \\ b & c & 1\end{pmatrix}\dot s_{1}\right)
&=
  \delta_{\val(c) \ge c(\eta_1\eta_2)}
  \displaystyle\int_{\mathcal{O}_{F}} \eta_2(1+bt) f_{n}\left(\begin{pmatrix}1 & a + ct & 0\\ 0 & 1 & 0\\ 0 & 0 & 1\end{pmatrix}\dot s_{1}\right)dt \\
&= \delta_{\val(c) \ge c(\eta_1\eta_2)}\cdot\gamma^{n}\displaystyle\int_{\mathcal{O}_{F}} \eta_2(1+bt) g\left(\frac{a + ct}{\varpi_{F}^{n}}\right)dt.
\end{align*}
To simplify the notation, we put
\begin{equation}
  k_g(a,b,c) := \int_{\mathcal{O}_{F}}\eta_{2}(1 + at)g(b + ct)dt\label{eq:k_g}
\end{equation}
for $a\in (\varpi_{F})$ and $b,c\in F$.
Then
\[
f_{n}\left(\begin{pmatrix}1 & a & 0\\ 0 & 1 & 0 \\ b & c & 1\end{pmatrix}\dot s_{1}\right)
=\delta_{\val(c)\ge c(\eta_{1}\eta_{2})}\gamma^{n}k_g\left(b,\dfrac{a}{\varpi_{F}^{n}},\dfrac{c}{\varpi_{F}^{n}}\right).
\]

\noindent ($w = s_{2}$)
We have for $a,c \in (\varpi_F)$:
\begin{align*}
& f_{n}\left(\begin{pmatrix}1 & 0 & 0\\ a & 1 & 0\\ c & 0 & 1\end{pmatrix}\dot s_{2}\right)\\
& =
\delta_{\val(c) \ge c(\eta_1\eta_2)} \Bigg\{
\int_{\mathcal{O}_{F}} \eta_2(t) \left[f_{n}\begin{pmatrix}1 & 0 & 0\\ a+ct & 1 & 0\\ 0 & 0 & 1\end{pmatrix} - f_{n}\begin{pmatrix}1 & 0 & 0\\ a & 1 & 0\\ 0 & 0 & 1\end{pmatrix}\right] dt 
 + \delta_{c(\eta_2) = 0}\cdot \frac{q - 1}{q - \eta_{2}(\varpi_{F})}  f_{n}\begin{pmatrix}1 & 0 & 0\\ a & 1 & 0\\ 0 & 0 & 1\end{pmatrix}\Bigg\} = 0.
\end{align*}

\noindent ($w = s_{1}s_{2}$)
We have for $a \in \mathcal O_F$, $c \in (\varpi_F)$:
\begin{align*}
f_{n}\left(\begin{pmatrix}1 & a & 0\\ 0 & 1 & 0\\ 0 & c & 1\end{pmatrix}\dot s_{1}\dot s_{2}\right)
& =
\delta_{\val(c) \ge c(\eta_1\eta_2)} \cdot \gamma^n \Bigg\{
\int_{\mathcal{O}_{F}} \eta_2(t) \left(g\left(\frac{a - ct}{\varpi_{F}^{n}}\right) - g\left(\frac{a}{\varpi_{F}^{n}}\right)\right) dt \\
& \quad + \delta_{c(\eta_2) = 0}\cdot \frac{q - 1}{q - \eta_{2}(\varpi_{F})} g\left(\frac{a}{\varpi_{F}^{n}}\right) \Bigg\}.
\end{align*}
We calculate 
\[
\int_{\mathcal{O}_{F}} \eta_2(t) \left(g\left(\frac{a - ct}{\varpi_{F}^{n}}\right) - g\left(\frac{a}{\varpi_{F}^{n}}\right)\right) dt
\]
 for $a,c\in \mathcal{O}_{F}$ (not only for $c\in (\varpi_{F})$).

\begin{enumerate}
\item Assume $\val(a)<\val(c)$ and $\val(a) < n$.
Then $\val(a - ct) = \val(a) < n$ for any $t\in \mathcal{O}_{F}$.
Hence the value is zero.
\item 
Assume that $\val(a)\ge n$.
Then $\val(a - ct)\ge n$ if and only if $t\in (\varpi_{F}^{n}/c)$.
Hence if $\val(c) < n$ then the value is equal to the sum of
\begin{align*}
&\int_{(\varpi_{F}^{n}/c)}\eta_{2}(t)\left(g\left(\frac{a - ct}{\varpi_{F}^{n}}\right) - g\left(\frac{a}{\varpi_{F}^{n}}\right)\right)dt\\
& = \eta_{2}\left(\frac{\varpi_{F}^{n}}{c}\right)\left|\frac{\varpi_{F}^{n}}{c}\right|_{F}\int_{\mathcal{O}_{F}}\eta_{2}(t')\left(g\left(\frac{a}{\varpi_{F}^{n}} - t'\right) - g\left(\frac{a}{\varpi_{F}^{n}}\right)\right)dt'
\end{align*}
and
\begin{align*}
& \left(\int_{\mathcal{O}_{F} \setminus (\varpi_{F}^{n}/c)}\eta_{2}(t)dt\right)\left(- g\left(\frac{a}{\varpi_{F}^{n}}\right)\right)\\
& = - \delta_{c(\eta_2)=0} \cdot \frac{q - 1}{q - \eta_{2}(\varpi_{F})}\left(1-\frac{\eta_2(\varpi_{F})^{n-\val(c)}}{q^{n-\val(c)}}\right) g\left(\frac{a}{\varpi_{F}^{n}}\right)\\
& = - \delta_{c(\eta_2)=0} \cdot \frac{q - 1}{q - \eta_{2}(\varpi_{F})}\left(1-\eta_{2}\left(\frac{\varpi_{F}^{n}}{c}\right)\left|\frac{\varpi_{F}^{n}}{c}\right|_{F}\right) g\left(\frac{a}{\varpi_{F}^{n}}\right)
\end{align*}
by Lemma~\ref{lem:int of chi(s)}, noting that this expression is zero unless $\eta_2$ is unramified.
\item 
Otherwise $\val(c)\le\val(a)<n$. Then the value is equal to
\begin{align*}
\int_{a/c + (\varpi_{F}^{n}/c)}\eta_{2}(t)g\left(\frac{a - ct}{\varpi_{F}^{n}}\right)dt
& = \left|\frac{\varpi_{F}^{n}}{c}\right|_{F}\eta_{2}\left(\frac{a}{c}\right)\int_{\mathcal{O}_{F}}\eta_{2}\left(1 - \frac{\varpi_{F}^{n}}{a}t'\right)g(t')dt'\\
& =
\left|\frac{\varpi_{F}^{n}}{c}\right|_{F}\eta_{2}\left(\frac{a}{c}\right)k_g\left(-\frac{\varpi_{F}^{n}}{a},0,1\right).
\end{align*}
\end{enumerate}

To simplify the notation, we put
\begin{equation}
  h_g(a,b) := \int_{\mathcal{O}_{F}}\eta_{2}(t)\big(g(a + bt) - g(a)\big)dt + \delta_{c(\eta_{2}) = 0}\frac{q - 1}{q - \eta_{2}(\varpi_{F})}g(a)\label{eq:h_g}
\end{equation}
for $a, b \in \mathcal O_F$.
Then we have
\begin{align*}
& f_{n}\left(\begin{pmatrix}1 & a & 0\\ 0 & 1 & 0\\ 0 & c & 1\end{pmatrix}\dot s_{1}\dot s_{2}\right)\\
& =
\delta_{\val(c) \ge c(\eta_1\eta_2)}\cdot\gamma^{n}
\begin{cases}
h_g\left(\dfrac{a}{\varpi_{F}^{n}},-\dfrac{c}{\varpi_{F}^{n}}\right) & (\val(a)\ge n,\val(c)\ge n),\\
\eta_{2}\left(\dfrac{\varpi_{F}^{n}}{c}\right)\left|\dfrac{\varpi_{F}^{n}}{c}\right|_{F}h_g\left(\dfrac{a}{\varpi_{F}^{n}},-1\right) & (\val(a)\ge n,\val(c) < n),\\
\left|\dfrac{\varpi_{F}^{n}}{c}\right|_{F}\eta_{2}\left(\dfrac{a}{c}\right)k_g\left(-\dfrac{\varpi_{F}^{n}}{a},0,1\right) & (\val(a) < n,\val(c)\le \val(a)),\\
0 & (\val(a) < n,\val(c) > \val(a)).
\end{cases}
\end{align*}

\noindent ($w = s_{2}s_{1}$)
We have for $a \in (\varpi_F)$:
\begin{align*}
f_{n}\left(\begin{pmatrix}1 & 0 & 0\\ a & 1 & 0\\ 0 & 0 & 1\end{pmatrix}\dot s_{2}\dot s_{1}\right)
& = \frac{\delta_{c(\eta_1\eta_2)=0} (q-1)}{q(q^2-\eta_1\eta_2(\varpi_{F}))}
\Bigg\{
\int_{\mathcal{O}_{F}} \eta_2(t) \left[f_{n}\begin{pmatrix}1 & 0 & 0\\ a+t & 1 & 0\\ 0 & 0 & 1\end{pmatrix} - f_{n}\begin{pmatrix}1 & 0 & 0\\ a & 1 & 0\\ 0 & 0 & 1\end{pmatrix}\right] dt \\
& \quad + \delta_{c(\eta_2) = 0}\cdot \frac{q - 1}{q - \eta_{2}(\varpi_{F})} f_{n}\begin{pmatrix}1 & 0 & 0\\ a & 1 & 0\\ 0 & 0 & 1\end{pmatrix}
+ \displaystyle\int_{(\varpi_{F})} \eta_2(1-at) f_{n}\left(\begin{pmatrix}1 & t & 0\\ 0 & 1 & 0\\ 0 & 0 & 1\end{pmatrix}\dot s_{1}\right)dt\Bigg\}.
\end{align*}
The first two terms are zero, so
\begin{align*}
f_{n}\left(\begin{pmatrix}1 & 0 & 0\\ a & 1 & 0\\ 0 & 0 & 1\end{pmatrix}\dot s_{2}\dot s_{1}\right)
& =
\delta_{c(\eta_{1}\eta_{2}) = 0}\frac{q - 1}{q^{2} - \eta_{1}\eta_{2}(\varpi_{F})}\frac{\gamma^{n}}{q^{n+1}}\int_{\mathcal{O}_{F}}\eta_{2}(1 - a\varpi_{F}^{n}t)g(t)dt\\
& =
\delta_{c(\eta_{1}\eta_{2}) = 0}\frac{q - 1}{q^{2} - \eta_{1}\eta_{2}(\varpi_{F})}\frac{\gamma^{n}}{q^{n+1}}k_g(-a\varpi_{F}^{n},0,1).
\end{align*}

\noindent ($w = w_{0}$)
We have for $a \in \mathcal O_F$:
\begin{align*}
f_{n}\left(\begin{pmatrix}1 & a & 0\\ 0 & 1 & 0\\ 0 & 0 & 1\end{pmatrix}\dot w_{0}\right)
& = \frac{\delta_{c(\eta_1\eta_2)=0} (q-1)}{q(q^2-\eta_1\eta_2(\varpi_{F}))}\gamma^{n}
\Bigg\{
\int_{\mathcal{O}_{F}} \eta_2(t) \left(g\left(\frac{a - t}{\varpi_{F}^{n}}\right) - g\left(\frac{a}{\varpi_{F}^{n}}\right)\right) dt \\
& \quad + \delta_{c(\eta_2) = 0}\cdot \frac{q - 1}{q - \eta_{2}(\varpi_{F})} g\left(\frac{a}{\varpi_{F}^{n}}\right)
\Bigg\}.
\end{align*}
We have already calculated the term in parentheses.
We have
\begin{align*}
& f_{n}\left(\begin{pmatrix}1 & a & 0\\ 0 & 1 & 0\\ 0 & 0 & 1\end{pmatrix}\dot w_{0}\right)\\
& =
\delta_{c(\eta_{1}\eta_{2}) = 0}\frac{q - 1}{q(q^{2} - \eta_{1}\eta_{2}(\varpi_{F}))}\gamma^{n}
\begin{cases}
\eta_{2}(\varpi_{F}^{n})\lvert\varpi_{F}^{n}\rvert_{F}h_g\left(\dfrac{a}{\varpi_{F}^{n}},-1\right) & (\val(a) \ge n),\\
\eta_{2}(a)\lvert\varpi_{F}^{n}\rvert_{F}k_g\left(-\dfrac{\varpi_{F}^{n}}{a},0,1\right) & (\val(a) < n).
\end{cases}
\end{align*}

\begin{lem}\label{lem:how to take g}
Assume that $\eta_{2}\ne 1$ and $\eta_{2}\ne \lvert\cdot\rvert_{F}^{-1}$.
\begin{enumerate}
\item 
  For any smooth function $g\colon F\to C$ that vanishes outside $\mathcal O_F$ the functions $h_g$ \eqref{eq:h_g} and $k_g$ \eqref{eq:k_g} are smooth. In particular, $h_g$ is bounded on $\mathcal{O}_{F}\times \mathcal{O}_{F}$.
\item There exists a smooth function $g\colon F\to C$ that vanishes outside $\mathcal O_F$ such that $\int_{\mathcal{O}_{F}}g(t)dt = 0$ and $h_g(0,-1) \ne 0$.
\item Assume that $\int_{\mathcal{O}_{F}}g(t)dt = 0$.
Then the function $k_g(a,b,c)$ is compactly supported, hence bounded on $(\varpi_{F})\times F\times F$.
Moreover, $k_g(a,0,1) = 0$ if $\val(a) \ge c(\eta_2)$. \end{enumerate}
\end{lem}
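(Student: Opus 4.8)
\emph{Overview.} The three assertions are essentially independent: (i) and (ii) are soft, while (iii) is the only one that uses the hypothesis $\int_{\mathcal{O}_F}g(t)\,dt=0$ and is the crux.

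\emph{Part (i).} Smoothness is automatic. Since $g$ is locally constant and supported in the compact set $\mathcal{O}_F$ and $\eta_2$ is a smooth character, the integrand $(a,b,c,t)\mapsto\eta_2(1+at)g(b+ct)$ in the definition of $k$ is locally constant on $(\varpi_F)\times F\times F\times\mathcal{O}_F$ (note $1+at\in\mathcal{O}_F^\times$ there, so $\eta_2(1+at)$ makes sense); as the $t$-variable runs over the compact set $\mathcal{O}_F$, near any point one may split $\mathcal{O}_F$ into finitely many cosets on which the integrand is constant uniformly in $(a,b,c)$, so $k$ is locally constant. For $h$ the same works once one notes that in $\int_{\mathcal{O}_F}\eta_2(t)(g(a+bt)-g(a))\,dt$ the factor $g(a+bt)-g(a)$ vanishes for $\val(t)$ large, so $\eta_2(t)$ is effectively integrated over a compact subset of $\mathcal{O}_F\setminus\{0\}$ where it is bounded and locally constant, while $\delta_{c(\eta_2)=0}\frac{q-1}{q-\eta_2(\varpi_F)}g(a)$ is plainly locally constant in $a$. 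Finally, a locally constant function on the compact set $\mathcal{O}_F\times\mathcal{O}_F$ takes finitely many values, so $h$ is bounded there.

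\emph{Part (ii).} Both $g\mapsto\int_{\mathcal{O}_F}g(t)\,dt$ and $g\mapsto h(0,-1)$ are $C$-linear in $g$ on the space of smooth functions $F\to C$ supported in $\mathcal{O}_F$, so it suffices to show these two functionals are not proportional; any $g$ in the kernel of the first but not the second then works. I would check non-proportionality by hand. If $\eta_2$ is ramified, take $g:=\eta_2^{-1}\mathbf{1}_{\mathcal{O}_F^\times}$: then $g(0)=0$ and $\int_{\mathcal{O}_F}g=\int_{\mathcal{O}_F^\times}\eta_2^{-1}=0$, while $h(0,-1)=\int_{\mathcal{O}_F}\eta_2(t)g(-t)\,dt=\eta_2(-1)^{-1}(1-q^{-1})\ne0$. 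If $\eta_2$ is unramified, then $\eta_2\ne1$ gives $\eta_2(\varpi_F)\ne1$; applying the two functionals to $g_1:=\mathbf{1}_{1+(\varpi_F)}$ and $g_2:=\mathbf{1}_{\varpi_F\mathcal{O}_F^\times}$ (both of which vanish at $0$, so the $\delta_{c(\eta_2)=0}$-term never enters) yields the pairs $(q^{-1},q^{-1})$ and $\bigl((q-1)q^{-2},\eta_2(\varpi_F)(q-1)q^{-2}\bigr)$, not proportional since $\eta_2(\varpi_F)\ne1$; one concrete choice is $g:=g_2-\tfrac{q-1}{q}g_1$.

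\emph{Part (iii).} Assume $\int_{\mathcal{O}_F}g(t)\,dt=0$. If $\val(a)\ge c(\eta_2)$ then $\eta_2(1+at)=1$ for all $t\in\mathcal{O}_F$, so $k(a,0,1)=\int_{\mathcal{O}_F}g(t)\,dt=0$. For compact support, set $C:=\max(1,c(\eta_2)-1)$. First, for $\val(c)<-C$ (so $r:=-\val(c)\ge1$ and $\val(a\varpi_F^{r})\ge c(\eta_2)$), partition $\mathcal{O}_F=\bigsqcup_{s}(s+\varpi_F^{r}\mathcal{O}_F)$ over representatives $s$ of $\mathcal{O}_F/\varpi_F^{r}\mathcal{O}_F$; on each part $c\varpi_F^{r}\in\mathcal{O}_F^\times$, so substituting $w=b+ct$ (with $dt=q^{-r}\,dw$) maps it onto $b+cs+\mathcal{O}_F$ and $\eta_2(1+at)$ is constant there, whence the contribution of the part equals $\eta_2(1+as)\,q^{-r}\int_{b+cs+\mathcal{O}_F}g(w)\,dw$; this is $0$, being either $\int_{\mathcal{O}_F}g=0$ (if $b+cs\in\mathcal{O}_F$) or an integral of $g$ over a subset of $F\setminus\mathcal{O}_F$, hence disjoint from $\supp g$. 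Summing over $s$ gives $k(a,b,c)=0$. Second, if $\val(c)\ge-C$ and $k(a,b,c)\ne0$, then some $t\in\mathcal{O}_F$ has $b+ct\in\supp g\subseteq\mathcal{O}_F$, forcing $b\in\mathcal{O}_F+c\mathcal{O}_F\subseteq\varpi_F^{-C}\mathcal{O}_F$. Hence $\supp k\subseteq(\varpi_F)\times\varpi_F^{-C}\mathcal{O}_F\times\varpi_F^{-C}\mathcal{O}_F$, which is compact; together with (i) this shows $k$ is compactly supported, hence bounded, on $(\varpi_F)\times F\times F$. The main obstacle is exactly this last step: choosing the partition so that the cancellation $\int_{\mathcal{O}_F}g=0$ is extracted on each coset when $\lvert c\rvert$ is large; everything else is routine.
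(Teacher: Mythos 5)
Your proof is correct; all three parts go through. Part (i) is handled exactly as in the paper (translation‑invariance coming from the local constancy of $g$ and the conductor of $\eta_2$), but in (ii) and (iii) you take somewhat different routes. For (ii) the paper makes a single uniform choice: with $c:=\max(c(\eta_{2}),1)$ it takes $g=-1$ on $\mathcal{O}_{F}\setminus(-1+(\varpi_{F}^{c}))$ and $g=q^{c}-1$ on $-1+(\varpi_{F}^{c})$, and computes $h(0,-1)=1-\delta_{c(\eta_{2})=0}\tfrac{q-1}{q-\eta_{2}(\varpi_{F})}$, nonzero precisely because $\eta_{2}\ne 1$; your non‑proportionality framing with the test functions $\eta_{2}^{-1}\mathbf{1}_{\mathcal{O}_{F}^{\times}}$ (ramified case) and $\mathbf{1}_{\varpi_{F}\mathcal{O}_{F}^{\times}}-\tfrac{q-1}{q}\mathbf{1}_{1+(\varpi_{F})}$ (unramified case) is equally valid, at the cost of a case split, and it makes transparent where the standing hypothesis $\eta_{2}\ne 1$ is actually needed (only in the unramified case). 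For (iii) the paper argues via the relative position of the balls $b+c\mathcal{O}_{F}$ and $\mathcal{O}_{F}$ and, when $\mathcal{O}_{F}\subset b+c\mathcal{O}_{F}$, substitutes $t'=b+ct$ to write $k(a,b,c)=\lvert c\rvert_{F}^{-1}\eta_{2}(1-ab/c)\int_{\mathcal{O}_{F}}\eta_{2}\bigl(1+\tfrac{a}{c-ab}t'\bigr)g(t')\,dt'$, which vanishes as soon as $\val(a)-\val(c)\ge c(\eta_{2})$ because $\int_{\mathcal{O}_{F}}g=0$; your partition of $\mathcal{O}_{F}$ into cosets modulo $\varpi_{F}^{-\val(c)}$, on each of which $\eta_{2}(1+at)$ is constant and the $g$-integral runs over a full additive translate of $\mathcal{O}_{F}$, extracts exactly the same cancellation in a more hands-on way. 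The paper's version yields a slightly sharper support region (also controlled by $\val(a)$, namely $\val(b)\ge\val(c)>\val(a)-c(\eta_{2})$ in the unbounded regime), but for the lemma — and for its later use, where only boundedness of $k$ and the vanishing of $k(a,0,1)$ for $\val(a)\ge c(\eta_{2})$ are invoked — your cruder bound $\supp k\subset(\varpi_{F})\times\varpi_{F}^{-C}\mathcal{O}_{F}\times\varpi_{F}^{-C}\mathcal{O}_{F}$ is entirely sufficient.
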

\begin{proof}
For (i), take $\ell\in\Z_{>0}$ such that $g(x + t) = g(x)$ for any $t\in (\varpi_{F}^{\ell})$.
Then we have $h_g(a + a_{1},b + b_{1}) = h_g(a,b)$ for any $a_{1},b_{1}\in (\varpi_{F}^{\ell})$.
Hence $h_g$ is smooth and a similar argument applies for $k_g$.

We prove (ii).
Set $c := \max(c(\eta_{2}),1)$ and define $g\colon \mathcal{O}_{F}\to C$ by 
\[
g(x)
=
\begin{cases}
-1 & (x\in \mathcal{O}_{F}\setminus (-1 + (\varpi_{F}^{c}))),\\
q^{c} - 1 & (x\in -1 + (\varpi_{F}^{c})).
\end{cases}
\]
Then $g(x + y) = g(x)$ for any $y\in (\varpi_{F}^{c})$ and $\sum_{x\in \mathcal{O}_{F}/(\varpi_{F}^{c})}g(x) = 0$.
Hence $\int_{\mathcal{O}_{F}}g(x)dx = 0$.
We have
\begin{align*}
h_g(0,-1) & = \int_{\mathcal{O}_{F}}\eta_{2}(t)(g(-t) - g(0))dt + \delta_{c(\eta_{2}) = 0}\frac{q - 1}{q - \eta_{2}(\varpi_{F})}g(0)\\
& = \sum_{a\in \mathcal{O}_{F}/(\varpi_{F}^{c})}\int_{(\varpi_{F}^{c})}\eta_{2}(t - a)(g(a-t) - g(0))dt + \delta_{c(\eta_{2}) = 0}\frac{q - 1}{q - \eta_{2}(\varpi_{F})}g(0)\\
& = \sum_{a\in \mathcal{O}_{F}/(\varpi_{F}^{c})}(g(a) - g(0))\int_{(\varpi_{F}^{c})}\eta_{2}(t - a)dt + \delta_{c(\eta_{2}) = 0}\frac{q - 1}{q - \eta_{2}(\varpi_{F})}g(0)\\
& = q^{c}\int_{(\varpi_{F}^{c})}\eta_{2}(t + 1)dt - \delta_{c(\eta_{2}) = 0}\frac{q - 1}{q - \eta_{2}(\varpi_{F})}.
\end{align*}
We have $\eta_{2}(1 + t) = 1$ for any $t\in \varpi_{F}^{c}\mathcal{O}_{F}$.
Hence
\[
h_g(0,-1) = 1 - \delta_{c(\eta_{2}) = 0}\frac{q - 1}{q - \eta_{2}(\varpi_{F})}
=\begin{cases}
\dfrac{1 - \eta_{2}(\varpi_{F})}{q - \eta_{2}(\varpi_{F})} & (c(\eta_{2}) = 0),\\
1 & (c(\eta_{2}) > 0).
\end{cases}
\]
This is not zero, as $\eta_2 \ne 1$ (here we finally use this assumption).

Consider $k_g(a,b,c) = \int_{\mathcal{O}_{F}}\eta_{2}(1 + at)g(b + ct)dt$ and recall that $\supp(g) \subset \mathcal{O}_F$.
The $p$-adic balls $b+c\mathcal{O}_F$ and $\mathcal{O}_F$ are either disjoint or nested.
If $(b+c\mathcal{O}_F)\cap \mathcal{O}_F = \varnothing$, then $k_g(a,b,c) = 0$, and $b+c\mathcal{O}_F \subset \mathcal{O}_F$ is equivalent to $b,c \in \mathcal{O}_F$ (compact).
Hence it remains to consider the case where $\mathcal{O}_F \subset b+c\mathcal{O}_F$, or equivalently $\val(c) \le \min(0,\val(b))$.
Letting $t' := b+ct$ we obtain
\begin{align*}
  k_g(a,b,c) &= \lvert c\rvert_{F}^{-1} \int_{\mathcal{O}_{F}}\eta_{2}\left(1 + a\cdot \dfrac{t'-b}c\right)g(t')dt' \\
  &= \lvert c\rvert_{F}^{-1} \eta_2\left(1 - \dfrac{ab}c\right) \int_{\mathcal{O}_{F}}\eta_{2}\left(1 + \dfrac{a}{c-ab} t'\right)g(t')dt'.
\end{align*}
Hence if $\val(a)-\val(c-ab) \ge c(\eta_2)$, then $k_g(a,b,c) = 0$.
Note that $\val(c-ab) = \val(c)$, as $\val(ab) > \val(b) \ge \val(c)$.
Therefore in this region, $k_g$ is supported on the compact subset $\val(b) \ge \val(c) > \val(a)-c(\eta_2) > -c(\eta_2)$.
\end{proof}

\begin{proof}[Proof of Lemma~\ref{lem:f_n span dense subspace}]
We assume that $g$ satisfies the condition of Lemma~\ref{lem:how to take g}(ii) and prove that $\pi_{0}(g,\gamma)=(\Ind_{B}^{G}\chi)^{\cts}$ for a suitable $\gamma \in C^\times$.
Let us assume that $n \ge c(\eta_2)$ from now on.
Since $\int_{\mathcal{O}_{F}}g(x)dx = 0$, the formula for $f_{n}$ simplifies and we have the following, where $a\in \mathcal{O}_{F}$ and $b,c\in (\varpi_{F})$:
\begin{align*}
f_{n}\left(\begin{pmatrix}1 & a & 0\\ 0 & 1 & 0 \\ b & c & 1\end{pmatrix}\dot s_{1}\right)
& = 
\delta_{\val(c) \ge c(\eta_1\eta_2)}\cdot \gamma^{n}k_g\left(b,\frac{a}{\varpi_{F}^{n}},\frac{c}{\varpi_{F}^{n}}\right),\\
f_{n}\left(\begin{pmatrix}1 & a & 0\\ 0 & 1 & 0\\ 0 & c & 1\end{pmatrix}\dot s_{1}\dot s_{2}\right)
& =
\delta_{\val(c) \ge c(\eta_1\eta_2)}\cdot\gamma^{n}
\begin{cases}
h_g\left(\dfrac{a}{\varpi_{F}^{n}},-\dfrac{c}{\varpi_{F}^{n}}\right) & (\val(a)\ge n,\val(c)\ge n),\\
\eta_{2}\left(\dfrac{\varpi_{F}^{n}}{c}\right)\left|\dfrac{\varpi_{F}^{n}}{c}\right|_{F}h_g\left(\dfrac{a}{\varpi_{F}^{n}},-1\right) & (\val(a)\ge n,\val(c) < n),\\
\left|\dfrac{\varpi_{F}^{n}}{c}\right|_{F}\eta_{2}\left(\dfrac{a}{c}\right)k_g\left(-\dfrac{\varpi_{F}^{n}}{a},0,1\right)
& (\val(a) < n,\val(c)\le \val(a)),\\
0 & (\val(a) < n,\val(c) > \val(a)),
\end{cases}\\
f_{n}\left(\begin{pmatrix}1 & a & 0\\ 0 & 1 & 0\\ 0 & 0 & 1\end{pmatrix}\dot w_{0}\right)
& =
\delta_{c(\eta_{1}\eta_{2}) = 0}\frac{q - 1}{q(q^{2} - \eta_{1}\eta_{2}(\varpi_{F}))}\gamma^{n}
\begin{cases}
\eta_{2}(\varpi_{F}^{n})\lvert\varpi_{F}^{n}\rvert_{F}h_g\left(\dfrac{a}{\varpi_{F}^{n}},-1\right) & (\val(a) \ge n),\\
\eta_{2}(a)\lvert\varpi_{F}^{n}\rvert_{F}k_g\left(-\dfrac{\varpi_{F}^{n}}{a},0,1\right) & (\val(a) < n).
\end{cases}
\end{align*}
On the other orbits, $f_n$ vanishes.
(In case of $w = s_2 s_1$, this is because $n \ge c(\eta_2)$ and Lemma~\ref{lem:how to take g}(iii).)

\emph{From now on, we use the assumption $\lvert \eta_{2}(\varpi_{F})\rvert > \lvert q\rvert$ and put $\gamma := q/\eta_{2}(\varpi_{F})$, so $|\gamma| < 1$.}
We define $v\in (\Ind_{B\cap L}^{L}\chi)^\cts$ by
\[
v\begin{pmatrix}
1 & 0 & 0\\
0 & 1 & 0\\
0 & 0 & 1
\end{pmatrix}
=
0,\quad
v\begin{pmatrix}
1 & 0 & 0\\
a & 1 & 0\\
0 & 0 & 1
\end{pmatrix}
=
\delta_{\val(a) \ge c(\eta_1\eta_2)}
\frac{q^{\val(a)}}{\eta_{2}(a)},\quad
v\left(\begin{pmatrix}
1 & a & 0\\
0 & 1 & 0\\
0 & 0 & 1
\end{pmatrix}\dot s_{1}
\right)
=
\delta_{c(\eta_{1}\eta_{2}) = 0}\frac{q - 1}{q(q^{2} - \eta_{1}\eta_{2}(\varpi_{F}))},
\]
where $a \in (\varpi_{F})\setminus\{0\}$, respectively $a \in \mathcal{O}_{F}$.
Note that this defines a continuous function since $\lim_{a\to 0}\lvert q^{\val(a)}/\eta_{2}(a)\rvert = \lim_{a\to 0}\lvert \gamma\rvert^{\val(a)} = 0$.
We also define $h'_{n}\in C^0(\overline{N},Cv)$ by
\[
h'_{n}\begin{pmatrix}
1 & 0 & 0\\
0 & 1 & 0\\
b & c & 1
\end{pmatrix}
= 
\begin{cases}
h_g\left(\dfrac{b}{\varpi_{F}^{n}},-1\right)v & (\val(b)\ge n,c\in \mathcal{O}_F),\\
\eta_{2}\left(\dfrac{b}{\varpi_{F}^{n}}\right)k_g\left(-\dfrac{\varpi_{F}^{n}}{b},0,1\right)v
 & (0 \le \val(b) < n,c\in \mathcal{O}_F),\\
0 & (\text{otherwise}).
\end{cases}
\]
This function can be regarded as an element of $(\Ind_{P}^{G}(\Ind_{B\cap L}^{L}\chi)^{\cts})^{\cts}$ as usual: $h'_{n}(\overline{n}p) = p^{-1}h'_{n}(\overline{n})\in (\Ind_{B\cap L}^{L}\chi)^{\cts}$ for $p\in P$, $\overline n \in \overline N$ and $h'_{n}|_{G\setminus \overline{N}P} = 0$.
Since $(\Ind_{P}^{G}(\Ind_{B\cap L}^{L}\chi)^{\cts})^{\cts}\simeq (\Ind_{B}^{G}\chi)^{\cts}$
we can regard $h'_{n}$ as an element of $(\Ind_{B}^{G}\chi)^{\cts}$, namely the function $x\mapsto h_n'(x)(1)$ for $x \in G$.
A concrete description of $h'_{n}$ is as follows.
First we have 
\begin{align*}
\supp(h'_{n}) \subset \begin{pmatrix}
1 & 0 & 0\\
0 & 1 & 0\\
\mathcal{O}_{F} & \mathcal{O}_{F} & 1
\end{pmatrix}
P/B
&\subset I^{s_{1}s_{2}}\big((I^{s_{1}s_{2}} \cap L)(B\cap L)\cup (I^{s_{1}s_{2}} \cap L)\dot s_{1}(B\cap L)\big)B/B \\
&\subset I^{s_{1}s_{2}}B/B\cup I^{s_{1}s_{2}}\dot s_{1}B/B,
\end{align*}
where $I^{s_{1}s_{2}} := (\dot s_{1}\dot s_{2})^{-1}I\dot s_{1}\dot s_{2}$ is another Iwahori subgroup.
On each orbit we have the following.
If $a\in (\varpi_{F}),b,c\in \mathcal{O}_{F}$ then
\begin{align*}
& h'_{n}\begin{pmatrix}
1 & 0 & 0\\
a & 1 & 0\\
b & c & 1
\end{pmatrix}\\
& =
h'_{n}\left(\begin{pmatrix}
1 & 0 & 0\\
0 & 1 & 0\\
b - ac & c & 1
\end{pmatrix}
\begin{pmatrix}
1 & 0 & 0\\
a & 1 & 0\\
0 & 0 & 1
\end{pmatrix}\right)\\
& =
\delta_{\val(a) \ge c(\eta_1\eta_2)}
\begin{cases}
h_g\left(\dfrac{b - ac}{\varpi_{F}^{n}},-1\right)\dfrac{q^{\val(a)}}{\eta_{2}(a)} & (\val(b - ac)\ge n),\\
\eta_{2}\left(\dfrac{b - ac}{\varpi_{F}^{n}}\right)k_g\left(-\dfrac{\varpi_{F}^{n}}{b - ac},0,1\right)\dfrac{q^{\val(a)}}{\eta_{2}(a)} & (\val(b - ac) < n),
\end{cases}
\end{align*}
where we interpret $\frac{q^{\val(a)}}{\eta_{2}(a)}$ as $0$ when $a = 0$.
If $a,b,c\in \mathcal{O}_{F}$, then we have
\begin{align*}
& h'_{n}\left(
\begin{pmatrix}
1 & a & 0\\
0 & 1 & 0\\
c & b & 1
\end{pmatrix}\dot s_{1}\right)\\
& =
h'_{n}\left(
\begin{pmatrix}
1 & 0 & 0\\
0 & 1 & 0\\
c & b-ac & 1
\end{pmatrix}
\begin{pmatrix}
1 & a & 0\\
0 & 1 & 0\\
0 & 0 & 1
\end{pmatrix}\dot s_{1}\right)\\
& =
\delta_{c(\eta_{1}\eta_{2}) = 0}\frac{q - 1}{q(q^{2} - \eta_{1}\eta_{2}(\varpi_{F}))}
\begin{cases}
h_g\left(\dfrac{c}{\varpi_{F}^{n}},-1\right) & (\val(c)\ge n),\\
\eta_{2}\left(\dfrac{c}{\varpi_{F}^{n}}\right)k_g\left(-\dfrac{\varpi_{F}^{n}}{c},0,1\right)
& (\val(c) < n).
\end{cases}
\end{align*}

Put $h_{n} := (\dot{s}_{1}\dot{s}_{2})^{-1}f_{n} \in \pi_0(g,\gamma)$.
The value on $I^{s_{1}s_{2}}B/B$ is as follows.
If $a\in (\varpi_{F}),b,c\in \mathcal{O}_{F}$, then
\begin{align*}
& h_{n}\begin{pmatrix}
1 & 0 & 0\\
a & 1 & 0\\
b & c & 1
\end{pmatrix}\\
& = 
f_{n}\left(\begin{pmatrix}
1 & b & c\\
0 & 1 & 0\\
0 & a & 1
\end{pmatrix}\dot{s}_{1}\dot{s}_{2}\right)\\
& = 
f_{n}\left(\begin{pmatrix}
1 & 0 & c\\
0 & 1 & 0\\
0 & 0 & 1
\end{pmatrix}\begin{pmatrix}
1 & b - ac & 0\\
0 & 1 & 0\\
0 & a & 1
\end{pmatrix}\dot{s}_{1}\dot{s}_{2}\right)\\
& = 
f_{n}\left(\begin{pmatrix}
1 & b - ac & 0\\
0 & 1 & 0\\
0 & a & 1
\end{pmatrix}\dot{s}_{1}\dot{s}_{2}\right)\\
& =
\delta_{\val(a) \ge c(\eta_1\eta_2)}
\begin{cases}
\gamma^{n}h_g\left(\dfrac{b - ac}{\varpi_{F}^{n}},-\dfrac{a}{\varpi_{F}^{n}}\right) & (\val(b - ac)\ge n,\val(a)\ge n),\\
\dfrac{q^{\val(a)}}{\eta_{2}(a)}h_g\left(\dfrac{b - ac}{\varpi_{F}^{n}},-1\right) & (\val(b - ac)\ge n,\val(a) < n),\\
\dfrac{q^{\val(a)}}{\eta_{2}(a)}\eta_{2}\left(\dfrac{b - ac}{\varpi_{F}^{n}}\right)
k_g\left(-\dfrac{\varpi_{F}^{n}}{b - ac},0,1\right)
 & (\val(b - ac) < n,\val(a)\le \val(b - ac)),\\
0 & (\val(b - ac) < n,\val(a) > \val(b - ac)).
\end{cases}
\end{align*}
On $I^{s_{1}s_{2}}\dot s_{1}B/B$, it is as follows.
Let $a,b,c\in \mathcal{O}_{F}$.
Then
\begin{align*}
h_{n}\left(\begin{pmatrix}
1 & a & 0\\
0 & 1 & 0\\
c & b & 1
\end{pmatrix}
\dot s_{1}\right) & = 
f_{n}\left(
\begin{pmatrix}
1 & c & b\\
0 & 1 & a\\
0 & 0 & 1
\end{pmatrix}\dot w_{0}
\right)\\
& = f_{n}\left(
\begin{pmatrix}
1 & 0 & b\\
0 & 1 & a\\
0 & 0 & 1
\end{pmatrix}
\begin{pmatrix}
1 & c & 0\\
0 & 1 & 0\\
0 & 0 & 1
\end{pmatrix}\dot w_{0}
\right)\\
& = f_{n}\left(
\begin{pmatrix}
1 & c & 0\\
0 & 1 & 0\\
0 & 0 & 1
\end{pmatrix}\dot w_{0}
\right)\\
& =
\delta_{c(\eta_{1}\eta_{2}) = 0}\frac{q - 1}{q(q^{2} - \eta_{1}\eta_{2}(\varpi_{F}))}
\begin{cases}
h_g\left(\dfrac{c}{\varpi_{F}^{n}},-1\right) & (\val(c) \ge n),\\
\eta_{2}\left(\dfrac{c}{\varpi_{F}^{n}}\right)
k_g\left(-\dfrac{\varpi_{F}^{n}}{c},0,1\right)
& (\val(c) < n),
\end{cases}
\end{align*}
which is the same as the value of $h'_{n}$.

To prove that $\pi_{0}(g,\gamma)=(\Ind_{B}^{G}\chi)^{\cts}$ we apply Corollary~\ref{cor:irreducibility criterion} with $\overline N_0 = \overline N \cap K$, where we think of $(\Ind_{B}^{G}\chi)^{\cts}$ as $(\Ind_{P}^{G}(\Ind_{B\cap L}^{L}\chi)^{\cts})^{\cts}$.
As indicated in \S\ref{sec:notation}, our choice of norm on $C$ induces an $L \cap K$-invariant norm on $(\Ind_{B\cap L}^{L}\chi)^{\cts}$, which in turn induces a $K$-invariant norm $\lVert\cdot\rVert$ on $(\Ind_{P}^{G}(\Ind_{B\cap L}^{L}\chi)^{\cts})^{\cts}$.
This norm $\lVert\cdot\rVert$ is nothing but the $K$-invariant norm induced on $(\Ind_{B}^{G}\chi)^{\cts}$ by our norm on $C$.

We have
\[
\inf_{n}\lVert h'_{n}\rVert\ge \inf_{n}\left|h'_{n}\begin{pmatrix}1 & 0 & 0\\ \varpi_F^{c(\eta_{1}\eta_{2}) + 1} & 1 & 0\\ 0 & 0 & 1\end{pmatrix}\right| = \left| h_g(0,-1)\right|\lvert \gamma\rvert^{c(\eta_{1}\eta_{2}) + 1} > 0,
\]
as $h_g(0,-1) \ne 0$.
Hence it is sufficient to prove $\lim_{n\to\infty}(h_{n} - h'_{n}) = 0$.
On $I^{s_{1}s_{2}}\dot{s}_{1}\dot{s}_{2}B/B\cup I^{s_{1}s_{2}}\dot{s}_{2}\dot{s}_{1}B/B\cup I^{s_{1}s_{2}}\dot{w}_{0}B/B$, $h_{n} = h'_{n} = 0$.
On $I^{s_{1}s_{2}}\dot{s}_{1}B/B$, we have $h_{n} = h'_{n}$.
On $I^{s_{1}s_{2}}\dot{s}_{2}B/B$, we have $h'_{n} = 0$ and it is sufficient to prove $\lim_{n\to\infty}\lVert h_{n}\rVert = 0$.
This is equivalent to $\lim_{n\to\infty}\lVert f_{n}\rVert = 0$ on $I\dot{s}_{1}B/B$, and this is true since the function $k_g$ is bounded and $\lvert\gamma\rvert < 1$.

Finally we estimate
\begin{equation}\label{eq:h-h',last case}
\left|
h_{n}\begin{pmatrix}
1 & 0 & 0\\
a & 1 & 0\\
b & c & 1
\end{pmatrix}
-
h'_{n}\begin{pmatrix}
1 & 0 & 0\\
a & 1 & 0\\
b & c & 1
\end{pmatrix}
\right|.
\end{equation}
for $a\in (\varpi_{F})$ and $b,c\in \mathcal{O}_{F}$.
If $\val(a) \le \val(b - ac) < n$ or $\val(a) < n \le \val(b - ac)$, then this is zero.
If $\val(b - ac) < n$ and $\val(a) > \val(b - ac)$, then \eqref{eq:h-h',last case} equals
\[
\left|
\dfrac{q^{\val(a)}}{\eta_{2}(a)}\eta_{2}\left(\dfrac{b - ac}{\varpi_{F}^{n}}\right)k_g\left(-\dfrac{\varpi_{F}^{n}}{b - ac},0,1\right)\right|
= \lvert \gamma\rvert^{\val(a)} \left|\eta_{2}\left(\dfrac{b - ac}{\varpi_{F}^{n}}\right)k_g\left(-\dfrac{\varpi_{F}^{n}}{b - ac},0,1\right)\right|.
\]
This is zero if $n \ge \val(b - ac) + c(\eta_{2})$ by Lemma~\ref{lem:how to take g}(iii).
Assume $n < \val(b - ac) + c(\eta_{2})$.
Then $\val(a) > n - c(\eta_{2})$.
As $\lvert\gamma\rvert < 1$, we get at most
\begin{align*}& \le \lvert \gamma\rvert^{n - c(\eta_{2})} \left|\eta_{2}(\varpi_F)\right|^{\val((b-ac)/\varpi_{F}^{n})} \left|k_g\left(-\dfrac{\varpi_{F}^{n}}{b - ac},0,1\right)\right|\notag\\
& \le \lvert \gamma\rvert^{n - c(\eta_{2})} \left|\eta_{2}(\varpi_F)\right|^{\val((b-ac)/\varpi_{F}^{n})} \sup_{x\in (\varpi_{F})}\lvert k_g(x,0,1)\rvert.
\end{align*}
We have $-c(\eta_{2}) < \val((b - ac)/\varpi_{F}^{n}) < 0$.
Therefore there exists $r > 0$ which does not depend on $a,b,c,n$ such that \eqref{eq:h-h',last case} is less than or equal to $r\lvert \gamma\rvert^{n}$ in this case.

If $\val(b - ac) \ge n$ and $\val(a)\ge n$, then 
\[
\left|
h_{n}
\begin{pmatrix}
1 & 0 & 0\\
a & 1 & 0\\
b & c & 1
\end{pmatrix}
\right|
\le
\lvert \gamma\rvert^{n}\sup_{x,y\in \mathcal{O}_{F}}\lvert h_g(x,y)\rvert
\]
and
\[
\left|
h'_{n}
\begin{pmatrix}
1 & 0 & 0\\
a & 1 & 0\\
b & c & 1
\end{pmatrix}
\right|
\le 
\left|\dfrac{q^{\val(a)}}{\eta_{2}(a)}\right|\sup_{x\in \mathcal{O}_{F}}\lvert h_g(x,-1)\rvert.
\]
Since $\val(a)\ge n$ and $\lvert\gamma\rvert < 1$, we have $\lvert q^{\val(a)}/\eta_{2}(a)\rvert = \lvert\gamma\rvert^{\val(a)} \le \lvert \gamma\rvert^{n}$.
Hence \eqref{eq:h-h',last case} is less than or equal to $\lvert \gamma\rvert^{n}\sup_{x,y\in\mathcal{O}_{F}}\vert h_g(x,y)\rvert$ in this case.

In summary, \eqref{eq:h-h',last case} is less than or equal $r_{1}\lvert \gamma\rvert^{n}$ for some $r_{1} > 0$ which does not depend on $a,b,c,n$.
Hence it converges to zero uniformly.
\end{proof}

\subsection{Proof of Proposition~\ref{prop:final case}}
\label{sec:empty2}
Let $\pi\subset (\Ind_{B}^{G}\chi)^{\sm}$ be a non-zero subrepresentation.
We have a non-zero map $\pi\hookrightarrow (\Ind_{B}^{G}\chi)^{\sm}$.
Hence by Frobenius reciprocity, we have a non-zero $T$-equivariant map $\pi_{U}\to \chi$.
The composition $\pi_{N}\to\pi_{U}\to \chi$ is non-zero $Z_{L}$-equivariant map.
Therefore $\pi_{N}^{Z_{L} = \chi}$ is non-zero and it is a subrepresentation of $((\Ind_{B}^{G}\chi)^{\sm})_N^{Z_L = \chi} \cong (\Ind_{B\cap L}^{L}\chi)^{\sm}$ (cf.\ subsection \ref{sec:jacquet-modules}).

By our assumption we have $\chi_{1}\ne \chi_{2}$.
Therefore, $(\Ind_{B\cap L}^{L}\chi)^{\sm}$ is irreducible or has as socle a twist of the Steinberg representation.
We define functions $f_n$ ($n \ge 1$) as at the beginning of subsection~\ref{sec:density-arg} and recall that they depend on a smooth function $g : F \to C$ vanishing outside $\mathcal O_F$ and a constant $\gamma \in C^\times$.
The choice of $g$ and $\gamma$ will come from Lemma~\ref{lem:f_n span dense subspace}.

If $(\Ind_{B\cap L}^{L}\chi)^{\sm}$ is irreducible, then $\pi_{N}^{Z_{L} = \chi} = (\Ind_{B\cap L}^{L}\chi)^{\sm}$, or equivalently $$\pi^{N\cap K,Z_{L}^{+} = \chi}=(\Ind_{B}^{G}\chi)^{\sm,N\cap K,Z_{L}^{+} = \chi}.$$
Then $f_{n} \in \pi$ for all $n \ge 1$ and so $\pi$ is dense in $(\Ind_{B}^{G}\chi)^{\cts}$ by Lemma~\ref{lem:f_n span dense subspace}.
Hence $(\Ind_{B}^{G}\sigma)^{\cts}$ is irreducible by Theorem~\ref{thm:criterion}.

Assume that $(\Ind_{B\cap L}^{L}\chi)^{\sm}$ is reducible with socle a twist of the Steinberg representation.
Hence $\pi^{N\cap K,Z_{L}^{+} = \chi} \cong \pi_{N}^{Z_{L} = \chi}$ also contains a twist of the Steinberg representation.
The socle of $(\Ind_{B\cap L}^{L}\chi)^{\sm}$ is the kernel of a surjective morphism $\varphi\colon (\Ind_{B\cap L}^{L}\chi)^{\sm}\onto (\chi_{1}\lvert \cdot\rvert_F^{-1}\circ \det)\boxtimes\chi_{3}$.
If the support of $f\in (\Ind_{B\cap L}^{L}\chi)^{\sm}$ is contained in $(U\cap L)\dot{s}_{1}(B\cap L)$, then we can normalize $\varphi$ so that $\varphi(f) = \int_{U\cap L}f(x\dot{s}_{1})dx$.
Hence, assuming that $\int_{\mathcal{O}_{F}}g(x)dx = 0$, we have
\begin{align*}
\varphi(f_{n}|_{L}) & = \int_{U\cap L}f_{n}(x\dot s_{1})dx\\
& = \int_{F}f_{n}\left(\begin{pmatrix}
1 & x & 0\\
0 & 1 & 0\\
0 & 0 & 1
\end{pmatrix}\dot s_{1}\right)dx\\
& = \gamma^n \int_{(\varpi_{F}^{n})}g\left(\frac{x}{\varpi_{F}^n}\right)dx\\
& = \gamma^n \lvert\varpi_{F}^{n}\rvert_F\int_{\mathcal{O}_{F}}g(x)dx = 0.
\end{align*}
Therefore, in this case, $f_n|_L$ lies in the socle of $(\Ind_{B\cap L}^{L}\chi)^{\sm}$, i.e.\ $f_n \in \pi^{N\cap K,Z_{L}^{+} = \chi} \subset \pi$.
By Lemma~\ref{lem:f_n span dense subspace} $\pi$ is dense in $(\Ind_{B}^{G}\chi)^{\cts}$ and hence $(\Ind_{B}^{G}\chi)^{\cts}$ is irreducible by Theorem~\ref{thm:criterion}.

\newcommand{\etalchar}[1]{$^{#1}$}
\providecommand{\bysame}{\leavevmode\hbox to3em{\hrulefill}\thinspace}
\providecommand{\MR}{\relax\ifhmode\unskip\space\fi MR }
% \MRhref is called by the amsart/book/proc definition of \MR.
\providecommand{\MRhref}[2]{%
  \href{http://www.ams.org/mathscinet-getitem?mr=#1}{#2}
}
\providecommand{\href}[2]{#2}

\end{document}